\documentclass[letterpaper,11pt]{article}
 
\usepackage{etoolbox}
\newtoggle{anonymous}
\togglefalse{anonymous}

\usepackage[usenames,dvipsnames,svgnames,table]{xcolor}
\usepackage[colorlinks=true,linkcolor=blue,citecolor=ForestGreen]{hyperref}
\usepackage{amsmath,amssymb,amsfonts,amsthm}
\usepackage{enumerate}
\usepackage{scrextend}
\usepackage[utf8]{inputenc}
\usepackage[T1]{fontenc}
\usepackage[capitalize]{cleveref}
\usepackage{paralist}
\usepackage{bm}
\usepackage{thmtools}
\usepackage{float}
\usepackage[color=green!40,textsize=small]{todonotes}
\usepackage{orcidlink}
  \usepackage[margin=1in]{geometry}
	
	\usepackage{setspace}
	\setstretch{1}

\newenvironment{customthm}[1]
  {\innercustomthm}
  {\endinnercustomthm}

\newenvironment{customcor}[1]
  {\innercustomcor}
  {\endinnercustomcor}

\Crefname{figure}{Figure}{Figures}
\Crefname{claim}{Claim}{Claims}
\Crefname{theorem}{Theorem}{Theorems}

\crefformat{equation}{\textup{#2(#1)#3}}
\crefrangeformat{equation}{\textup{#3(#1)#4--#5(#2)#6}}
\crefmultiformat{equation}{\textup{#2(#1)#3}}{ and \textup{#2(#1)#3}}
{, \textup{#2(#1)#3}}{, and \textup{#2(#1)#3}}
\crefrangemultiformat{equation}{\textup{#3(#1)#4--#5(#2)#6}}%
{ and \textup{#3(#1)#4--#5(#2)#6}}{, \textup{#3(#1)#4--#5(#2)#6}}{, and \textup{#3(#1)#4--#5(#2)#6}}

\Crefformat{equation}{Equation~\textup{#2(#1)#3}}
\Crefrangeformat{equation}{Equations~\textup{#3(#1)#4--#5(#2)#6}}
\Crefmultiformat{equation}{Equations~\textup{#2(#1)#3}}{ and \textup{#2(#1)#3}}
{, \textup{#2(#1)#3}}{, and \textup{#2(#1)#3}}
\Crefrangemultiformat{equation}{Equations~\textup{#3(#1)#4--#5(#2)#6}}%
{ and \textup{#3(#1)#4--#5(#2)#6}}{, \textup{#3(#1)#4--#5(#2)#6}}{, and \textup{#3(#1)#4--#5(#2)#6}}

\crefname{section}{Section}{Sections}
\crefname{theorem}{Theorem}{Theorems}
\crefname{claim}{Claim}{Claims}
\crefname{lemma}{Lemma}{Lemmas}

\usepackage{tikz}
\tikzset{normalnode/.style={circle, draw, fill=black, inner sep=0, minimum width=1.5mm}}

\usetikzlibrary{fit}
\usetikzlibrary{shapes,arrows,shadows,positioning}
\usetikzlibrary{backgrounds}

\newcommand{\Nn}{\mathbb{N}}

\newcommand{\Fc}{\mathcal{F}}
\newcommand{\Mc}{\mathcal{M}}

\newcommand{\Xc}{\mathcal{X}}
\newcommand{\Yc}{\mathcal{Y}}
\newcommand{\Zc}{\mathcal{Z}}

\newcommand{\Sc}{\mathcal{S}}
\newcommand{\Cc}{\mathcal{C}}

\newtheorem{theorem}{Theorem}[section]
\newtheorem{question}{Question}

\newtheorem{corollary}[theorem]{Corollary}

\newtheorem{lemma}[theorem]{Lemma}

\theoremstyle{definition}
\newtheorem{definition}[theorem]{Definition}

\theoremstyle{remark}
\newtheorem{remark}[theorem]{Remark}
\newtheorem{claim}[theorem]{Claim}

\newenvironment{pocd}[1]{\begin{proof}[Proof of {C}laim #1.]}{\end{proof}}

\newcommand{\her}{\mathrm{Her}}
\newcommand{\mon}{\mathrm{Mon}}


\renewcommand{\leq}{\leqslant}
\renewcommand{\geq}{\geqslant}

\renewcommand{\Pr}[1]{\mathbb{P}\left[\,#1\,\right]}
\newcommand{\Ex}[1]{\mathbb{E} \left[\,#1\,\right]}
 
\newcommand{\comment}[1]{}

\begin{document}

\title{Small But Unwieldy:\\ A Lower Bound on Adjacency Labels for Small Classes\thanks{A conference version of this paper will appear at SODA 2024 \cite{BDSZZ23}.}}

\author{{\'E}douard Bonnet\thanks{Univ. Lyon, ENS de Lyon, UCBL, CNRS, LIP, France,  \texttt{edouard.bonnet@ens-lyon.fr}, \orcidlink{0000-0002-1653-5822}} \and Julien Duron\thanks{Univ. Lyon, ENS de Lyon, UCBL, CNRS, LIP, France, \texttt{julien.duron@ens-lyon.fr}, \orcidlink{0009-0004-0925-9438}} \and    John Sylvester\thanks{Department of Computer Science, University of Liverpool, UK, \texttt{john.sylvester@liverpool.ac.uk}, \orcidlink{0000-0002-6543-2934}}
		\and
		Viktor Zamaraev\thanks{Department of Computer Science, University of Liverpool, UK, \texttt{viktor.zamaraev@liverpool.ac.uk}, \orcidlink{0000-0001-5755-4141}}
	\and Maksim Zhukovskii\thanks{Department of Computer Science, University of Sheffield, UK, \texttt{m.zhukovskii@sheffield.ac.uk}, \orcidlink{0000-0001-8763-9533} }}

\date{}

\maketitle


%

%
%
%




\begin{abstract}
    We show that for any natural number~$s$, there is a~constant $\gamma$ and a~subgraph-closed class having, for any natural~$n$, at most $\gamma^n$ graphs on $n$ vertices up to isomorphism, but no adjacency labeling scheme with labels of size at most $s \log n$. In other words, for every~$s$, there is a~small --even \emph{tiny}-- monotone class without universal graphs of size $n^s$. Prior to this result, it was not excluded that every small class has an almost linear universal graph, or equivalently a~labeling scheme with labels of size $(1+o(1))\log n$. The existence of such a~labeling scheme, a~scaled-down version of the recently disproved Implicit Graph Conjecture, was repeatedly raised [Gavoille and Labourel, ESA '07; Dujmovi\'{c} et al., JACM~'21; Bonamy et al., SIDMA '22; Bonnet et al., Comb. Theory '22]. Furthermore, our small monotone classes have unbounded twin-width, thus simultaneously disprove the already-refuted Small conjecture; but this time with a~self-contained proof, not relying on elaborate group-theoretic constructions. 
 
 As our main ingredient, we show that with high probability an Erd\H{o}s--R\'{e}nyi random graph $G(n,p)$ with $p=O(1/n)$ has, for every $k \leqslant n$, at most $2^{O(k)}$ subgraphs on $k$ vertices, up to isomorphism.
 As a~barrier to our general method of producing even more complex tiny classes, we show that when $p=\omega(1/n)$, the latter no longer holds.
 More concretely, we provide an explicit lower bound on the number of unlabeled $k$-vertex induced subgraphs of $G(n,p)$ when $1/n \leq p \leq 1-1/n$.
 We thereby obtain a~threshold for the property of having exponentially many unlabeled induced subgraphs: if $\min \{p, 1-p\}<\delta/n$ with $\delta < 1$, then with high probability even the number of all unlabeled (not necessarily induced) subgraphs is $2^{o(n)}$, whereas if $C/n < p < 1-C/n$ for sufficiently large $C$, then with high probability the number of unlabeled induced subgraphs is~$2^{\Theta(n)}$. This result supplements the study of counting unlabeled induced subgraphs that was initiated by Erd\H{o}s and R\'{e}nyi with a~question on the number of unlabeled induced subgraphs of Ramsey graphs, eventually answered by Shelah.
	\end{abstract}

 \clearpage
	
	\section{Introduction}
	\label{sec:intro}

A \emph{class} of graphs is a~set of graphs which is closed under isomorphism. 
Let $\mathcal{C}$ be a~class of graphs and $f : \Nn \rightarrow \Nn$ be a~function. An \emph{$f(n)$-bit adjacency labeling scheme} or simply \emph{$f(n)$-bit labeling scheme}  for $\Cc$ is a~pair (encoder, decoder) of algorithms where for any $n$-vertex graph $G\in \Cc$ the encoder assigns to the vertices of $G$ \emph{pairwise distinct}\footnote{We note that this assumption is without loss of generality as the classes that we are interested in, namely hereditary classes with $2^{\Theta(n\log n)}$ labeled $n$-vertex graphs, are known to always contain arbitrarily large graphs whose all vertices have pairwise distinct neighborhoods \cite{Alekseev97} and therefore all of them should be assigned distinct labels in any adjacency labeling scheme. On the other hand, the assumption is convenient when considering the correspondence between labeling schemes and universal graphs.} $f(n)$-bit binary strings, called \emph{labels}, such that the adjacency between any pair of vertices can be inferred by the decoder only from their labels.
We note that the decoder depends on the class $\Cc$, but not on the graph $G$.
The function $f$ is the \emph{size} of the labeling scheme.
Adjacency labeling schemes were introduced by Kannan, Naor, and Rudich \cite{KNR88,KNR92},
and independently by Muller \cite{Muller88} in the late 1980s and have been actively studied since then.\footnote{It would be hard to give a~short but comprehensive list of references, so we refer the interested reader to the list of hundreds of papers citing \cite{KNR92}.}
%
For a~function $u : \Nn \rightarrow \Nn$, a~\emph{universal graph sequence} or simply \emph{universal graph} of size $u(n)$ is a~sequence
of graphs $(U_n)_{n\in \Nn}$ such that for every $n \in \Nn$ the graph $U_n$ has at most $u(n)$ vertices and every $n$-vertex graph in $\Cc$ is an induced subgraph of $U_n$.
It was observed in \cite{KNR92} that for a~class of graphs the existence an $f(n)$-bit labeling scheme is equivalent to the existence of a~universal graph of size $2^{f(n)}$.

One line of research in the area of labeling schemes is to describe graph classes that admit an
\emph{implicit representation}, i.e., an $O(\log n)$-bit labeling scheme.
Since labels assigned by a~labeling scheme are assumed to be pairwise distinct, $\lceil \log n \rceil$ is a~lower bound on the size of any labeling scheme (where, and throughout the paper, $\log$ is in base $2$).
Thus, understanding the expressive power of the labeling schemes of size of order $\log n$ is a~natural question.
A simple counting argument shows that a 
necessary condition for the existence of such a~labeling scheme for a~graph class
is at most \emph{factorial speed of the class}, i.e., the number of $n$-vertex graphs in the class should be at most $2^{O(n\log n)}$.
This condition alone is not sufficient, as witnessed by Muller's construction  \cite{Muller88} of a factorial graph class that does not admit an implicit representation. The crucial property used in that construction is that the class is not \emph{hereditary}, i.e., not closed under removing vertices.
Kannan, Naor, and Rudich \cite{KNR88} asked whether factorial speed and the property of being hereditary are
sufficient conditions for an implicit representation. 
This question was later reiterated by Spinrad \cite{Spinrad03} in the form of a~conjecture, that became known as the \emph{Implicit Graph Conjecture},

\begin{labeling}{(\textit{IGC}):}
	\item[(\textit{IGC}):] Any hereditary class of at most factorial speed admits an $O(\log n)$-bit labeling scheme.
\end{labeling}

The question remained open for three decades until the recent development of a~connection
between randomized communication complexity and adjacency labeling schemes \cite{Harms20,HWZ22,HHH22} led to the beautiful and striking refutation of the conjecture by Hatami and Hatami \cite{HH22}. In particular, Hatami and Hatami
showed that for any $\delta > 0$ there exists a~hereditary factorial class that does not admit
an $(n^{1/2-\delta})$-bit labeling scheme. This refutation leaves wide open the question of characterizing hereditary graph classes that admit $O(\log n)$-bit labeling schemes, and no plausible conjecture
is available at the moment. 

A large and important body of work within labeling schemes focuses on
the design of labeling schemes of asymptotically optimal size.
Such labeling schemes are known for various graph classes. For example, trees admit a~$(\log n + O(1))$-bit labeling scheme \cite{ADK17}, planar graphs admit a~$(1+o(1))\log n$-bit labeling scheme \cite{DEGJMM21}, all classes of bounded treewidth admit a~$(1+o(1))\log n$-bit labeling scheme \cite{GL07}. 
The latter example deals with an infinite family of graph classes rather than a~single class. The classes in this family are parameterized by the value of their treewidth, and the bound means that for any natural $k$ the class of graphs of treewidth at most $k$ admits a~labeling scheme of size $(1+o(1))\log n$, where $o(1)$
refers to a~function that depends on $n$ and $k$, and tends to 0 as $n$ goes to infinity.

We say that a~family $\Fc$ of graph classes has \emph{uniformly bounded} labeling schemes,
if there exists a~natural number $s$ such that every class in $\Fc$ admits an~
$(s + o(1))\log n$-bit labeling scheme. Using this terminology, we can say that the family of classes of graphs of bounded treewidth has uniformly bounded labeling schemes.
Another example of a~family with uniformly bounded labeling schemes is the family of proper minor-closed classes, since each of these classes admits a~$(2+o(1))\log n$-bit labeling scheme \cite{GL07}.

Note that an $n$-vertex graph with adjacency labels of size $f(n)$ can be encoded by a~binary
word of length at most $n \cdot f(n)$ obtained by concatenating the vertex labels.
Therefore, an $f(n)$-bit labeling scheme can represent at most $2^{n \cdot f(n)}$ unlabeled $n$-vertex graphs. Hence, a~necessary condition for a~family of graph classes to have a~uniform bound $(s+o(1))\log n$ on labeling schemes is that every class in the family should have at most $2^{(s+o(1))n \log n}$ unlabeled $n$-vertex graphs.
The proper minor-closed classes (including the classes of bounded treewidth) are \emph{small}, i.e.,
in each such class the number of \emph{labeled} $n$-vertex graphs is at most $n!c^{n} = 2^{(1+o(1))n \log n}$ for some constant $c$ \cite{NSTW06}. Thus, this family has the uniform bound of $2^{(1+o(1))n \log n}$ even on the number of labeled graphs.
In fact, the bound on the number of \emph{unlabeled} graphs in proper minor-closed classes is smaller: 
for any such
class there exists a~constant $c$ such that the number of unlabeled $n$-vertex graphs is at~most~$c^n$ \cite{BlankenshipThesis,BNW10,AFS12}.
We will call such classes \emph{tiny}. 
Note that any $n$-vertex unlabeled graph corresponds to at most $n!$ labeled graphs that are isomorphic to it, and therefore any tiny class is small. It is not known whether the converse holds (see $3. \Rightarrow 2.$ of Conjecture 8.1, in the first version of~\cite{BNO21}).

It is known that the classes of bounded clique-width \cite{ALR09} and more generally the classes of bounded twin-width \cite{BNO21} 
are tiny, and thus they have a~uniform upper bound on the number of $n$-vertex graphs.
These classes are also known to admit $O(\log n)$-bit labeling schemes, but these are not 
uniformly bounded labeling schemes.
Indeed, the best known labeling schemes for graphs of clique-width at most $k$ are of size $\Theta(k \log k \cdot \log n)$ \cite{Spinrad03,Banerjee22}; and the only known labeling scheme for graphs of twin-width at most $k$ is of size $2^{2^{\Theta(k)}} \cdot \log n$ \cite{BGK22}.
For each of these families no uniform bound is known, but such a bound is still possible. 
In fact, the necessary condition on the number of graphs does not rule out that the entire family of small
classes admits the optimal uniform bound of $(1+o(1))\log n$ on labeling schemes, and this question appeared in the literature several times \cite{GL07,DEGJMM21,Bonamy22,BGK22}. 

The main result of the present work is a~strong negative answer to this question: for any constant~$s$, there exists a~\emph{monotone tiny} class that does not admit an $s \log n$-bit labeling scheme.

	\subsection{Our contributions}
	Our main result shows that the family of small classes cannot have uniformly bounded adjacency labeling schemes.
In fact, we prove the stronger result that the \emph{monotone} (i.e., closed under taking subgraphs) tiny graph classes do not admit such labeling schemes.  

\def\mainthm{For any constant $s \in \Nn$, there exists a~tiny monotone graph class $\mathcal{C}$ that does not admit a~universal graph of size $n^s$.
	Equivalently, $\mathcal{C}$ has no adjacency labeling scheme of size $s \log n$.}

\begin{theorem}\label{thm:bdddeglabelings}
	\mainthm
\end{theorem}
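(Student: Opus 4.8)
The plan is to build the class $\mathcal{C}$ from sparse Erdős--Rényi random graphs, tuned to the target parameter $s$. Concretely, I would fix a constant $C = C(s)$, take a carefully chosen sequence of host graphs $G_m \sim G(m, C/m)$, and let $\mathcal{C}$ be the subgraph-closure of a family derived from the $G_m$ --- either the subgraphs of the $G_m$ themselves, or, more likely, the subgraphs of a complexity-boosting transformation of them (a bounded blow-up, a topological-minor operation, or a gadget substitution) designed so that labeling stays hard while the count of $k$-vertex members stays controlled. Monotonicity is then automatic, and I would use the equivalence recalled in the introduction between an $s\log n$-bit labeling scheme and a universal graph of size $n^s$, so that it suffices to rule out such a universal graph.

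For tininess I would invoke the stated main ingredient: with high probability $G(m, C/m)$ has at most $2^{O(k)}$ subgraphs on $k$ vertices up to isomorphism, for every $k \le m$. Fixing one good instance of each $G_m$, this bounds the number of $k$-vertex members of $\mathcal{C}$ originating from a single host. The subtle point is that a $k$-vertex member could a priori come from a host $G_m$ with $m$ arbitrarily larger than $k$, so I would need the construction to confine the relevant subgraphs to one controlled set --- e.g. by using a single countably infinite sparse host built incrementally, by diagonalizing over a sparse sequence of host sizes, or by observing that all $k$-vertex subgraphs arising across the family have uniformly bounded edge count and degeneracy, so their total number is still $2^{O(k)}$. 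Either way the target bound $\gamma^n$ follows with $\gamma$ depending only on $s$ (through $C$).

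The hard part is the lower bound: no universal graph on $n^s$ vertices. A pure counting argument cannot work, because $\mathcal{C}$ is tiny (only $\gamma^n$ $n$-vertex members) while an $n^s$-vertex graph can host up to $\binom{n^s}{n} = 2^{\Theta(sn\log n)}$ induced subgraphs, so there is no count-based obstruction --- the obstruction must be structural. I would argue by contradiction: assume a universal graph $U_n$ on $n^s$ vertices exists, and exploit monotonicity --- $\mathcal{C}$ contains every subgraph of a large host (or hard gadget) $G_N$ for a suitably large $N$ depending on $n$ --- to extract from $U_n$ a representation of a family of $n$-vertex graphs that cannot be realized simultaneously as induced subgraphs of any single $n^s$-vertex graph. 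In line with the randomized-communication-complexity connection underlying the recent refutation of the IGC, the natural route is to show that such an embedding would yield an adjacency sketch / communication protocol of impossibly low cost for a hard promise problem encoded by the host's structure (e.g. disjointness- or index-type hardness routed through the expansion of the sparse host); an alternative is a probabilistic-method argument that a uniformly random member of $\mathcal{C}$ almost surely fails to be an induced subgraph of any fixed $n^s$-vertex graph.

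The main obstacle is the tension between the two requirements: sparsity (hence few subgraphs, hence tininess) pulls against the structural richness needed to defeat every $n^s$-vertex universal graph --- and, as the degeneracy-based $O(d\log n)$-bit labeling scheme shows, merely being the monotone subgraph-closure of sparse graphs is not enough, since bounded-degeneracy classes do admit $s\log n$-bit schemes. So the construction has to sit right at the sparsity threshold (expected degree a large constant, linear edge count and degeneracy but an unbounded maximum degree or an embedded hard gadget), close enough to the $2^{\Theta(n)}$-subgraph regime to carry genuine complexity yet still inside the $2^{O(n)}$ regime guaranteed by the main ingredient. Pinning down this balance, and converting ``genuine complexity'' into a rigorous non-existence proof for small universal graphs, is the crux of the argument.
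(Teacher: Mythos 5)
Your first two ingredients broadly match the paper: the class is built as the monotone closure of (subgraphs of) sparse random graphs $G(m,C/m)$ with $C$ a constant depending on $s$, placed at a rapidly growing, sparse sequence of host sizes; and the subtlety you flag about $k$-vertex subgraphs coming from much larger hosts is exactly what the paper resolves (all small subgraphs of a sparse random host land in one fixed tiny class of near-forests, so only the host at the ``current'' level contributes new isomorphism types). So far so good.

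The genuine gap is in the lower bound, where you explicitly reject the argument that actually works. You write that ``a pure counting argument cannot work \dots the obstruction must be structural,'' and then propose communication-complexity or structural-embedding arguments without carrying any of them out. Your observation is correct only for counting \emph{within a single fixed class}: indeed $\gamma^n \ll \binom{n^s}{n}$, so no contradiction arises that way. But the paper (following the Hatami--Hatami refutation of the IGC, which is itself a counting argument over \emph{families} of classes) counts at a different level: at each host size $n$ one does not take a single random graph but a \emph{set} of $k_n = n^{2s-1}$ graphs of average degree $d=2s+4$. The number of ways to choose such a set from the roughly $n^{(d-2)n/2}$ available monotone-tiny $n$-vertex graphs is about $2^{\frac{d-2}{2}n^{2s}\log n}$, whereas the number of $k_n$-element sets simultaneously embeddable as induced subgraphs of \emph{some} $n^s$-vertex graph is at most $2^{n^{2s}}\cdot\binom{\binom{n^s}{n}}{k_n}\approx 2^{sn^{2s}\log n}$. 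Since $\frac{d-2}{2}=s+1>s$, some set at each level is unrepresentable, and one assembles the class from these. Your plan, by contrast, fixes one host per level in advance and then looks for a structural obstruction; with that setup there is no counting leverage, and the communication-complexity route you sketch is not developed to the point where it could substitute for the missing idea. You also never isolate why polynomially many graphs per level (rather than one) are needed --- a single $n$-vertex graph is trivially an induced subgraph of an $n^s$-vertex graph, so the whole force of the argument comes from the simultaneous-embedding requirement, which your proposal does not exploit.
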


Our technique also allows us to show that the family of all monotone tiny (and therefore the family of all small) classes cannot be ``described'' by a~countable set of $c$-factorial classes, where a~hereditary class $\Cc$ is $c$-factorial if it contains at most $n^{cn}$ labeled graphs on $n$ vertices.

\def\classthmbuff{	
	For any $c>0$, and any countable family $\mathcal{F}$ of $c$-factorial classes, there exists a~monotone tiny class that is not contained
	in any of the classes in $\mathcal{F}$.}
\begin{theorem}\label{th:class-complexity}
	\classthmbuff
\end{theorem}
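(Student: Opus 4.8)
The plan is to combine the counting machinery behind Theorem~\ref{thm:bdddeglabelings} with a diagonalization against the countable family $\mathcal{F}=\{\Cc_1,\Cc_2,\dots\}$. The key point is that each $c$-factorial class $\Cc_i$ has at most $n^{cn}=2^{cn\log n}$ labeled $n$-vertex graphs, whereas the "ambient" reservoir of tiny monotone classes we produce is much richer: it has, in an appropriate sense, doubly-exponentially many distinct members at each scale. Concretely, I would use the fact established in the paper that a random graph $G(m,p)$ with $p=O(1/m)$ has, with high probability and for every $k\le m$, at most $2^{O(k)}$ subgraphs on $k$ vertices up to isomorphism; this guarantees that the monotone closure $\mathrm{Mon}(G)$ of any such $G$ (the class of all subgraphs of $G$ together with isomorphism closure) is tiny, and that the same holds if we take the monotone closure of a \emph{sequence} of such sparse random graphs, one for each $n$, as is done in the proof of Theorem~\ref{thm:bdddeglabelings}.

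The main step is to show that there are ``too many'' tiny monotone classes for a countable family to cover. First I would argue that for each $n$ there are at least $2^{2^{\Omega(n)}}$ pairwise non-isomorphic $n$-vertex sparse graphs $G$ with the tiny-subgraph property, e.g.\ by taking disjoint unions of a linear number of constant-size gadgets chosen from an exponentially large pool, or more simply by noting that $G(n,c/n)$ itself realizes $2^{\Theta(n^2/ \log n)}$-many (hence super-exponentially many) isomorphism types while each type still has only $2^{O(n)}$ tiny-subgraph guarantees — the point being only that the number of \emph{admissible building blocks} dwarfs $2^{cn\log n}$. Then I would build the desired class by a greedy/diagonal construction: process $\Cc_1,\Cc_2,\dots$ in order, and at stage $i$ pick a fresh building block (a graph $G_{n_i}$ on $n_i$ vertices, for a rapidly increasing sequence $n_i$) that does \emph{not} belong to $\Cc_i$; this is possible because $\Cc_i$ contains at most $2^{cn_i\log n_i}$ graphs on $n_i$ vertices, far fewer than the $2^{2^{\Omega(n_i)}}$ available admissible blocks on $n_i$ vertices. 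Let $\Cc$ be the monotone closure of $\{G_{n_i}:i\ge 1\}$. By construction $G_{n_i}\in\Cc\setminus\Cc_i$ for every $i$, so $\Cc$ is contained in no member of $\mathcal{F}$; and $\Cc$ is tiny because each $G_{n_i}$ contributes only $2^{O(k)}$ subgraphs on $k$ vertices and, choosing $n_i$ growing fast enough (say $n_{i+1}\ge 2^{n_i}$), the contributions at each scale $k$ are dominated by the single block of size $\Theta(k)$, giving a $2^{O(k)}$ bound overall; a short union-bound argument over the finitely many relevant blocks of size $\le k$ closes this.

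The hard part will be making the two quantitative estimates line up cleanly: I need the count of ``admissible'' sparse graphs on $m$ vertices (those whose monotone closure is guaranteed tiny by the paper's random-graph lemma) to genuinely exceed $2^{cm\log m}$, and I need the tininess of the final amalgamated class to survive the superposition of infinitely many blocks. For the first estimate the cleanest route is probably not to argue about $G(m,p)$ directly but to fix a single large sparse graph with the subgraph-counting property and then take its induced subgraphs, or disjoint unions of small pieces, to manufacture the exponentially-many-in-$m$ isomorphism types by hand while keeping sparsity (bounded average degree) and hence the $2^{O(k)}$ subgraph bound — this sidesteps any delicate probabilistic counting. For the second estimate I would use that subgraphs of a disjoint union $G_{n_1}\sqcup G_{n_2}\sqcup\cdots$ decompose as disjoint unions of subgraphs of the parts, and that for $k$-vertex subgraphs only finitely many parts (those of size $\le k$) are relevant, each contributing $2^{O(k)}$ types, so a product bound $2^{O(k)}\cdot 2^{O(k)}\cdots$ over $O(\log^* k)$-many or so relevant scales still yields $2^{O(k)}$ provided the $n_i$ grow fast enough; one must be slightly careful that the hidden constants in ``$2^{O(k)}$'' do not accumulate, which is why the rapid growth $n_{i+1}\gg n_i$ (and correspondingly $O(k)$-bounds with a fixed constant for each block, times a number of blocks that is $O(1)$ for any fixed window of sizes) is essential. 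Once these two counting facts are in place, the diagonalization itself is routine.
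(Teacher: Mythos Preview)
Your high-level plan---diagonalize by choosing, for each $i$, a~building block $G_{n_i}\notin\Cc_i$ via counting, then take the monotone closure---is exactly the paper's strategy, and the counting step does go through (though not with your numbers: there cannot be $2^{2^{\Omega(n)}}$ graphs on $n$ vertices, and $G(n,d/n)$ has only $2^{\Theta(n\log n)}$ graphs in its support, not $2^{\Theta(n^2/\log n)}$; what the paper actually shows is that the pool $\Xc_d$ of admissible $n$-vertex blocks has size $\ge n^{\frac{d-2}{2}n(1-o(1))}$, and taking $d=2c+4$ makes this $\omega(n^{cn})$, which suffices).

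The genuine gap is in your tininess argument for $\mon(\{G_{n_i}\})$. You write that ``for $k$-vertex subgraphs only finitely many parts (those of size $\le k$) are relevant''---but this is backwards. Every block $G_{n_i}$ with $n_i\ge k$ contributes $k$-vertex subgraphs, and there are \emph{infinitely} many such blocks. Each contributes at most $c^k$ unlabeled $k$-vertex subgraphs, but the union over infinitely many $i$ is not a~priori $\le\gamma^k$ for any constant $\gamma$; rapid growth of the $n_i$ does nothing to help, since no matter how large $n_i$ is, $G_{n_i}$ still has $k$-vertex subgraphs. This is precisely why the paper strengthens ``monotone $c$-tiny'' to ``monotone $(c,\Yc,t)$-tiny'' (\cref{def:good}): each block $G_{n_i}$ is chosen so that all of its subgraphs on at most $t(n_i)=\ln^2 n_i$ vertices lie in one \emph{fixed} tiny class $\Yc=\Sc_d$. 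Placing the $n_i$ along a~$t$-sparse sequence then guarantees that for every $k$ at most one index $i$ satisfies $t(n_i)<k\le n_i$; all later blocks contribute only graphs already in $\Yc$, and all earlier blocks contribute nothing. This is the content of \cref{lem:good-tiny}, and it is the missing mechanism in your proposal.
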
 
As noted in \cite{AAALZ23}, most positive results from the literature on
implicit representations are associated with specific graph parameters in
the sense that an implicit representation for a~class is derived from the fact
that some graph parameter is bounded for this class.
It is tempting to try and apply such a~parametric approach to the characterization of small classes: identify a~set of graph parameters each of which implies smallness, and such that for any small class, at least one of the parameters in the set is bounded. 
Such an attempt, in the somewhat extreme form of a~single graph parameter, was recently made.
Namely, the Small conjecture posited that every small hereditary class has bounded twin-width \cite{BGK22}.
This conjecture was already refuted in \cite{BGTT22} using a~group-theoretic construction.
A corollary (\cref{th:param-complexity}) of Theorem \ref{th:class-complexity} is that the parametric approach to characterization of small, and even monotone tiny classes, is futile even if countably many parameters are utilized. As a~special case of this corollary we obtain an alternative refutation of the Small conjecture that avoids groups.

\def\twwthm{There exists a~monotone tiny graph class with unbounded twin-width.}

\begin{corollary}\label{cor:small-false}
	\twwthm
\end{corollary}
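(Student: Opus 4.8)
The plan is to derive \cref{cor:small-false} as an immediate consequence of \cref{th:class-complexity} together with the fact, recalled in the introduction, that the classes of bounded twin-width are tiny — and in particular $c$-factorial for a suitable constant. First I would recall that by \cite{BNO21} the class $\mathcal{T}_k$ of all graphs of twin-width at most $k$ satisfies $|\mathcal{T}_k \cap \{n\text{-vertex labeled graphs}\}| \leqslant n! \, c_k^n$ for some constant $c_k$, hence each $\mathcal{T}_k$ is $c$-factorial for, say, $c = 2$ (for $n$ large enough, absorbing the $c_k^n$ factor into $n^{n}$), and in any case is $c$-factorial for some absolute $c$ independent of $k$ once $n$ is large. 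Then the countable family $\mathcal{F} = \{\mathcal{T}_k : k \in \Nn\}$ is a countable family of $c$-factorial classes.

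Next I would apply \cref{th:class-complexity} to this family $\mathcal{F}$: it yields a monotone tiny class $\mathcal{C}$ that is not contained in any $\mathcal{T}_k$. Being contained in $\mathcal{T}_k$ is exactly the statement that every graph in $\mathcal{C}$ has twin-width at most $k$; so $\mathcal{C} \not\subseteq \mathcal{T}_k$ for every $k$ means precisely that the twin-width of graphs in $\mathcal{C}$ is unbounded. This $\mathcal{C}$ is monotone and tiny by the conclusion of \cref{th:class-complexity}, which is exactly the assertion of \cref{cor:small-false}.

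The only point requiring a little care — and the step I expect to be the main (minor) obstacle — is justifying that the $\mathcal{T}_k$ can be taken $c$-factorial with a single $c$ that does not depend on $k$, since the constant $c_k$ in the $n! c_k^n$ bound of \cite{BNO21} does grow with $k$. This is handled by noting that for each fixed $k$ the bound $n! c_k^n \leqslant n^{cn}$ holds for all sufficiently large $n$ with $c = 2$ (indeed $c_k^n = 2^{o(n \log n)}$), and that the definition of $c$-factorial only constrains the count for large $n$; alternatively, one replaces $\mathcal{T}_k$ by the (still tiny, still hereditary) class obtained by removing the finitely many small graphs, which does not affect containment questions for the infinite monotone class $\mathcal{C}$. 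With this in hand the corollary follows with no further work.
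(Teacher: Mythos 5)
Your proposal is correct and matches the paper's argument in substance: the paper derives the corollary from \cref{th:param-complexity} applied to the single small (hence $c$-factorial) parameter twin-width \cite[Theorem 2.5]{BGK22}, while you apply \cref{th:class-complexity} directly to the family $\{\mathcal{T}_k\}_{k\in\Nn}$ — but \cref{th:param-complexity} is obtained from \cref{th:class-complexity} by exactly this translation, so the two routes coincide. Your extra care about the uniform constant $c$ is reasonable and harmless; the paper sidesteps it by simply asserting twin-width is $2$-factorial.
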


In order to prove Theorem \ref{thm:bdddeglabelings} we construct the target classes via the probabilistic method. This required us to study of the growth of the number of $k$-vertex subgraphs in a random graph (see Section \ref{sec:outline} for more details).
In particular, we obtain two characterizations of the exponential growth of the number of subgraphs in random graphs. The first one, Theorem \ref{thm:tinyamalg},
characterizes random graphs in which the number of $k$-vertex subgraphs grows exponentially as a function of $k$. The second one, Theorem \ref{thm:threshold}, characterizes random graphs in which the \emph{total} number of (induced) subgraphs grows exponentially. 
We introduce some essential definitions before stating these results.

Let $G(n,p)$ be the (random graph) distribution on $n$-vertex graphs where each edge is included independently with probability $p$. For a~constant $c$, we say that an $n$-vertex graph~$G$ is \emph{monotone $c$-tiny} if, for every $k \geq 1$, the number of unlabeled $k$-vertex subgraphs of~$G$ is at~most~$c^k$. 

\begin{theorem}[Consequence of~Theorems~\ref{thm:Gnmisgood} and  \ref{thm:lowerbound}]\label{thm:tinyamalg}	
	The following holds:
	$\;$	
	\begin{enumerate}
		\item for every $d_1 \geq 0$, there exists $c > 0$ such that w.h.p. $G_n \sim G(n,d_1/n)$ is monotone $c$-tiny; and
		\item for every $c>0$, there exists $d_2\geq0$ such that w.h.p. $G_n \sim G(n,d_2/n)$ is not monotone $c$-tiny.
	\end{enumerate}

\end{theorem}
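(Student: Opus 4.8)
The two parts of the statement pull in opposite directions, so I would treat them separately, each time reducing to the quoted results \cref{thm:Gnmisgood}, \cref{thm:lowerbound}. For part~(1), the goal is: for every sparsity constant $d_1$, a~random graph $G_n\sim G(n,d_1/n)$ w.h.p.\ has at most $c^k$ unlabeled $k$-vertex subgraphs, for \emph{all} $k$ simultaneously, for some $c=c(d_1)$. The key structural feature of $G(n,d_1/n)$ to exploit is that its connected components are w.h.p.\ small (of size $O(\log n)$) and each has only a~bounded ``excess'' (number of edges minus number of vertices), so each component is tree-like. A~$k$-vertex subgraph of $G_n$ is obtained by choosing a~sub(multi)set of components, and from each chosen component choosing a~subgraph. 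I~would first argue that the number of \emph{unlabeled} connected graphs that can arise as a~subgraph of a~component with a~given number of vertices and bounded excess is singly exponential in the number of vertices (a~connected graph on $j$ vertices with $O(1)$ excess is a~tree plus $O(1)$ extra edges, and there are $2^{O(j)}$ trees on $j$ labeled vertices and $\binom{j}{2}^{O(1)}$ ways to add the extra edges, which after passing to isomorphism types is still $2^{O(j)}$). Summing a~product of such singly-exponential counts over all ways to partition $k$ into component sizes gives a~bound of the form $c^k$ by a~standard generating-function / composition argument. The reason I~expect this to be cleaner than it sounds is that all of this work is presumably already packaged inside \cref{thm:Gnmisgood}, so the real task here is just to \emph{state} part~(1) as an immediate consequence: take $d_1$, apply \cref{thm:Gnmisgood} to obtain the relevant $c$, and observe that ``$G_n$ is monotone $c$-tiny'' is exactly the conclusion.

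For part~(2), the direction is reversed: given the target constant $c$, I~must produce a~density $d_2/n$ at which w.h.p.\ $G_n$ \emph{fails} to be monotone $c$-tiny, i.e.\ for some $k$ (depending on $n$) the number of unlabeled $k$-vertex subgraphs exceeds $c^k$. This is where \cref{thm:lowerbound} does the heavy lifting: it should assert that for $p$ in a~suitable range the number of unlabeled $k$-vertex subgraphs of $G(n,p)$ is at least $2^{\Omega_d(k)}$ with an explicit rate, and crucially that the constant in the exponent grows with the density parameter $d$. So the plan is: pick $d_2$ large enough that the guaranteed growth rate $2^{\alpha(d_2) k}$ from \cref{thm:lowerbound} has $2^{\alpha(d_2)}>c$; apply the theorem with $p=d_2/n$ to get, w.h.p., some $k=k(n)$ with more than $c^k$ unlabeled $k$-vertex subgraphs; conclude that $G_n$ is not monotone $c$-tiny. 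A~minor point to check is that the range of $p$ for which \cref{thm:lowerbound} applies includes $p=d_2/n$ with $d_2$ an arbitrarily large constant, which it should, since the abstract advertises the regime $C/n<p<1-C/n$.

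The main obstacle is conceptual rather than technical: confirming that the quoted theorems are stated in a~form strong enough to yield the \emph{uniform-in-$k$} ``monotone $c$-tiny'' conclusion in part~(1) (a~bound that must hold for every $k\le n$ at once, not just for a~typical or a~fixed $k$), and conversely that \cref{thm:lowerbound} gives a~lower bound with an \emph{explicitly density-dependent exponent} so that part~(2) can drive the base above any prescribed $c$. If instead \cref{thm:lowerbound} only guaranteed $2^{\Theta(k)}$ with an unspecified constant, part~(2) would need an extra amplification step — e.g.\ observing that a~disjoint union of several independent copies of the witnessing configuration multiplies the count, or that taking $k$ linear in $n$ and tracking the entropy of the subgraph-isomorphism-type distribution yields a~rate that can be made to exceed $\log c$ by increasing $d_2$. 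I~would keep that amplification argument in reserve but expect the cited \cref{thm:lowerbound} to already supply the explicit dependence, making the proof of Theorem~\ref{thm:tinyamalg} essentially a~two-line dispatch to Theorems~\ref{thm:Gnmisgood} and~\ref{thm:lowerbound}.
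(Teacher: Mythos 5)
Your proposal matches the paper's treatment: \cref{thm:tinyamalg} is indeed proved as a short dispatch, with part~(1) following from \cref{thm:Gnmisgood} (more precisely from the $G(n,p)$ bound \cref{eq:bddforGnp} established inside its proof, together with the observation that monotone $(c,\Sc_d,\ln^2)$-tiny implies monotone $c'$-tiny because $\Sc_d$ itself has at most $\eta^k$ graphs per level by \cref{lem:YmonTiny}), and part~(2) following from \cref{thm:lowerbound} with $k=\bigl\lceil n/\sqrt{d}\bigr\rceil$, which yields $i_k(G_n)>(\delta d^{\delta/4})^k$ and hence defeats monotone $c$-tinyness (via $s_k\geq i_k$) once $d_2>(c/\delta)^{4/\delta}$ --- exactly the density-dependent exponent you anticipated. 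One minor caveat: your structural heuristic for part~(1) (all components of size $O(\log n)$ with bounded excess) is only valid in the subcritical regime $d_1<1$, since for $d_1>1$ a giant component emerges and the paper's \cref{thm:Gnmisgood} instead uses first-moment bounds on dense $k$-vertex subgraphs; this does not affect your argument, as you correctly defer that content to the cited theorem.
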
 
 
%

To obtain the second half of this result (Theorem \ref{thm:lowerbound}), we provide an explicit lower bound on the number of unlabeled $k$-vertex induced subgraphs of $G(n,p)$ for all $1/n \leq p \leq 1-1/n$. 
As a~consequence, we prove that $p=\Theta(1/n)$ is a~threshold for the property of having exponentially many unlabeled (induced) subgraphs. This has connections to an old question by Erd\H{o}s and R\'{e}nyi,
who conjectured that Ramsey graphs (i.e., graphs in which maximum homogeneous induced subgraphs are of logarithmic size) have exponentially many unlabeled induced subgraphs. This conjecture was proved by Shelah~\cite{S1998}.	
Our threshold shows that (relatively) sparse $G(n,p)$ has exponentially many unlabeled subgraphs (note that such graphs are not Ramsey). 
Let $s(G_n)$ and $i(G_n)$ denote the number of unlabeled subgraphs and the number of unlabeled \emph{induced} subgraphs of $G_n$, respectively.
\def\threshold{Let $\delta>0$ be any constant, $G_n \sim G(n,p)$, and let $C>0$ be sufficiently large. 
	\begin{enumerate}[$(i)$]
		\item\label{itm:1} If $\min \{p, 1-p\}<\frac{1-\delta}{n}$, then w.h.p., $i(G_n)\leq s(G_n)=2^{o(n)}$.
		\item\label{itm:2} If $\frac{C}{n} < p < 1-\frac{C}{n}$, then there exists $\lambda>1$ such that w.h.p., $s(G_n)\geq i(G_n)>\lambda^n$.
	\end{enumerate}
} 
\begin{theorem}\label{thm:threshold}
	\threshold
\end{theorem}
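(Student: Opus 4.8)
The plan is to prove the two parts separately, since they concern opposite regimes and rely on essentially different ideas.

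\medskip

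\noindent\textbf{Part \eqref{itm:1} (the sparse/dense subcritical regime).} By complementation it suffices to treat $p<\frac{1-\delta}{n}$, since $s(G)=s(\overline G)$ only up to the fact that we are counting subgraphs rather than induced subgraphs — actually here we bound $s(G_n)$ directly, and the complement of a subgraph is not a subgraph, so I would handle the $1-p$ case by the symmetric argument applied to \emph{induced} subgraphs first and then note $i\le s$; more cleanly, just run the whole argument once for $p<(1-\delta)/n$ and once for $1-p<(1-\delta)/n$, the latter on the complement at the level of induced subgraphs. In the subcritical regime $p<(1-\delta)/n$, the classical structure theory of $G(n,p)$ applies: w.h.p.\ every connected component has at most $O(\log n)$ vertices, and moreover the total ``excess'' $\sum_{\text{components }H}(e(H)-v(H)+1)$ is $O(1)$ in expectation, so w.h.p.\ it is $o(n)$ — in fact bounded, or at worst $n^{o(1)}$, which is all we need. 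A subgraph of $G_n$ is obtained by choosing a subset of vertices and then a subgraph on the induced structure; but to \emph{count unlabeled} subgraphs I would instead argue: every subgraph of $G_n$ is a disjoint union of subgraphs of the components of $G_n$, each component has $O(\log n)$ vertices hence at most $2^{O(\log^2 n)}$ unlabeled subgraphs, and there are at most $n$ components, so the number of unlabeled subgraphs is at most $(2^{O(\log^2 n)})^{(\text{number of component-isomorphism-types})}$. The number of distinct isomorphism types of components is at most the number of trees plus excess edges on $O(\log n)$ vertices, which is $2^{O(\log^2 n)}$, and an unlabeled subgraph of $G_n$ is determined by how many times each component-subgraph-type is used, giving a product of at most $(n+1)$ over $2^{O(\log^2 n)}$ types — this is $2^{O(\log^3 n)}=2^{o(n)}$. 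So the key steps are: (a) component size bound $O(\log n)$ w.h.p.; (b) bounded/small total excess w.h.p.; (c) the crude counting $i(G_n)\le s(G_n)\le \prod_{\text{types}}(n+1)=2^{\mathrm{polylog}(n)}=2^{o(n)}$.

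\medskip

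\noindent\textbf{Part \eqref{itm:2} (the supercritical regime).} Here I want a genuine exponential lower bound on $i(G_n)$. The strategy is to exhibit, w.h.p., exponentially many pairwise non-isomorphic induced subgraphs. The cleanest route: fix a moderately large window, say look at induced subgraphs on a linear-sized vertex set, and encode information in \emph{degree sequences} or in counts of some fixed small gadget. Concretely, for $C/n<p<1-C/n$ with $C$ large, I would take a set $A$ of $m=\lfloor n/2\rfloor$ vertices and, for each of the $2^{m}$ subsets $S\subseteq A$... that's too many; instead partition a linear set of vertices into $\Theta(n)$ disjoint \emph{triples} (or pairs), and observe that w.h.p.\ across these triples one sees, in bounded-size neighborhoods, enough variety that including/excluding each triple changes the isomorphism type. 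The honest tool is probably this: the paper already provides (Theorem~\ref{thm:lowerbound}, invoked via Theorem~\ref{thm:tinyamalg}) an explicit lower bound on the number of unlabeled $k$-vertex \emph{induced} subgraphs of $G(n,p)$ for $1/n\le p\le 1-1/n$; so for Part \eqref{itm:2} I would simply invoke that result with, e.g., $k=\Theta(n)$ (or the optimal $k$ it gives) to get $i(G_n)\ge 2^{\Omega(n)}$, i.e.\ $\lambda^n$ for some $\lambda>1$, and then $s(G_n)\ge i(G_n)$ trivially because every induced subgraph is a subgraph. The only thing to check is that the explicit bound from Theorem~\ref{thm:lowerbound}, specialized to the range $C/n<p<1-C/n$, is indeed exponential in $n$ — which it should be, since the ``barrier'' discussion in the introduction states precisely that this holds once $C$ is large enough.

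\medskip

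\noindent\textbf{Main obstacle.} For Part \eqref{itm:1} the only subtlety is the $1-p$ case: subgraph-counting is not complementation-invariant, so I must be careful to phrase the dense-side argument through induced subgraphs of the complement (which is subcritical) and then use $i(\overline{G_n})$ to control $i(G_n)$ — but $i(G_n)=i(\overline{G_n})$ since complementing is an isomorphism-preserving bijection on induced subgraphs, and separately bound $s(G_n)$ when $1-p$ is small by the same component argument on $\overline{G_n}$ noting a (non-induced) subgraph of $G_n$ need not be controlled this way; the fix is that when $1-p<(1-\delta)/n$ the graph $G_n$ is \emph{dense} and $s(G_n)$ could a priori be large, so Part \eqref{itm:1} as stated must really be read as: $i(G_n)\le s(G_n)$, and the claimed bound $2^{o(n)}$ is on whichever of these the statement supports — reading the theorem, it asserts both, so the honest resolution is that the dense case is handled by the sparse case on the complement at the induced level ($i(G_n)=i(\overline{G_n})=2^{o(n)}$) \emph{and} one observes that in the dense subcritical-complement regime $s(G_n)$ is also $2^{o(n)}$ because a subgraph of $G_n$ on vertex set $V$ is determined by a subgraph of the near-complete graph minus few non-edges, and the relevant counting again collapses to polylog exponent. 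That reconciliation is the one place I would slow down and write full details; everything else is routine application of standard $G(n,p)$ component estimates and the already-proved Theorem~\ref{thm:lowerbound}.
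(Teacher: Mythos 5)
Your Part~(\ref{itm:2}) is exactly the paper's argument: invoke Theorem~\ref{thm:lowerbound} with $k=\Theta(n)$ (the paper takes $k=\lfloor\delta^{4/\delta}n\rfloor$), note $s\geq i$, and extend to $p>1/2$ via $i(G)=i(\overline{G})$. That part is fine. The genuine gap is in Part~(\ref{itm:1}). Your count ``number of unlabeled subgraphs $\leq$ number of multisets of connected-subgraph types'' does not close at a single scale: if there are $T$ isomorphism types of connected pieces and up to $n$ components, the multiset count is $(n+1)^{T}=2^{T\log(n+1)}$, not $2^{O(\log n\cdot\log T)}$. With your $T=2^{O(\log^2 n)}$ this is $2^{2^{O(\log^2 n)}}=2^{n^{\omega(1)}}$, vastly larger than $2^{o(n)}$; even with the sharper count $T=n^{\Theta(A)}$ (unlabeled trees on at most $A\log n$ vertices number about $2.96^{A\log n}$, and $A$ grows as $\delta\to 0$) one gets $2^{n^{\Theta(A)}\log n}$, which still exceeds $2^{o(n)}$. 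The paper closes this with a two-scale decomposition you are missing: w.h.p.\ the set $S$ of vertices lying in components of size at least $\sqrt{\log\log n}$ has size $f_n=o(n)$ (a first-moment computation using $npe^{1-np}<1$ bounded away from $1$), so all subgraphs meeting $S$ are counted crudely by $2^{f_n}\cdot 100^{f_n}\cdot 2^{o(f_n)}=2^{o(n)}$ via Lemma~\ref{lem:sparseissmall}; outside $S$ every component has fewer than $\sqrt{\log\log n}$ vertices, so the number of connected types is at most $2^{\binom{\sqrt{\log\log n}}{2}}<\log n$, and only then does the multiset bound $\binom{n+\lceil\log n\rceil}{\lceil\log n\rceil}\leq 2^{\log^2 n}$ give $2^{o(n)}$.

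On the dense side of Part~(\ref{itm:1}): you are right to flag that $s$ is not complementation-invariant, and complementation does give $i(G_n)=i(\overline{G_n})=2^{o(n)}$ there. But your proposed repair of the $s$-bound in the regime $1-p<(1-\delta)/n$ is not correct: w.h.p.\ $\overline{G_n}$ has only $O(n)$ edges, so $G_n$ contains a clique on $\Omega(n)$ vertices, whence $s(G_n)\geq 2^{\Theta(n^2)}$ --- ``subgraph of a near-complete graph'' does not constrain anything, since subgraphs may delete edges freely. (The paper's own proof also only treats $p<(1-\delta)/n$ for the $s$-bound, so this is a defect of the statement rather than something your argument could have rescued; the correct reading in the dense regime is the bound on $i(G_n)$ only.)
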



\subsection{Proof outlines and techniques}\label{sec:outline}

\begin{figure}
	\centering
	\includegraphics[width=1\textwidth]{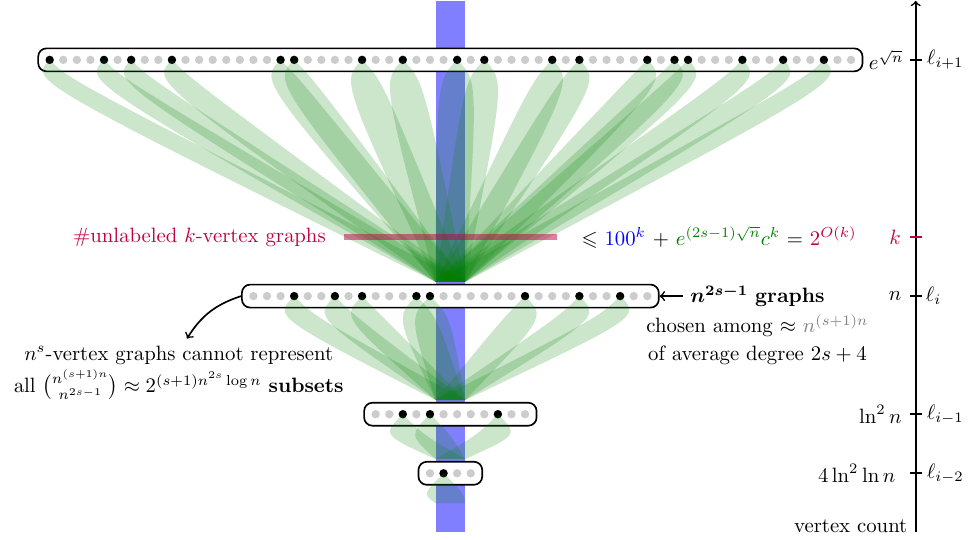}
	\caption{
		How to build a~monotone tiny graph class without $n^s$-vertex universal graphs?
		Take the subgraph closure of a~typical subset of $n^{2s-1}$ graphs with $n$ vertices and average degree $2s+4$, for every element $n$ of a~sequence $(\ell_i)_{i \in \mathbb N}$ with $\ell_0=1$ and $\ell_{i+1} = \lceil e^{\sqrt{\ell_i}} \rceil$ for every $i \in \mathbb N$.
		The tinyness is explained by elements in color (\textcolor{purple}{purple}, \textcolor{blue}{blue}, \textcolor{green!50!black}{green}).
		Indeed, the subgraphs on at most $t(n) = \ln^2 n$ vertices of such graphs turn out to have all their $k$-vertex connected components with less than $k+k/\ln k$ edges; \textcolor{blue}{the class $\mathcal Y$ of graphs with such a~limited edge--vertex surplus} (blue strip) is shown to have growth less than $k \mapsto 100^k$, thus is tiny.
		And, in general, the set of \textcolor{green!50!black}{all the subgraphs} (each green ``cone'') of \emph{one} such graph contains, for every $k$, at most $c^k$ graphs (with $c=s^{O(s)}$).
		The \textcolor{purple}{overall tinyness} is obtained since $k$-vertex graphs with $k \in [n,e^{\sqrt n}]$, outside the blue strip, stem from $(e^{\sqrt n})^{2s-1}=2^{o(n)}$, hence $2^{o(k)}$ graphs.  
		The non-representability is explained in \textbf{bold} and \textcolor{gray}{gray}.
		Of the $\approx n^{(s+1)n}$ graphs with $n$~vertices and average degree $2s+4$ (\textcolor{gray}{points} within the rounded rectangular horizontal boxes), we have around ${n^{(s+1)n} \choose n^{2s-1}} \approx 2^{(s+1)n^{2s}\log n}$ \textbf{choices} of subsets of cardinality $n^{2s-1}$. \newline
		This is more than \emph{all} the $n^s$-vertex universal graphs can represent since $2^{n^{2s}} \cdot {{n^s \choose n} \choose n^{2s-1}} \approx 2^{sn^{2s}\log n}$.
	}
	\label{fig:overall-picture}
\end{figure}

In this section we outline the proofs of our main results.

\paragraph{Monotone tiny classes that do not admit some polynomial size universal graphs.}
Our approach is inspired by the refutation of the IGC by Hatami and Hatami \cite{HH22}.
Namely, we expose a~family of \emph{tiny monotone} classes that is so large that there are not enough universal graphs of uniformly bounded size to capture all of them.
The approach is illustrated in \cref{fig:overall-picture} and involves several key ingredients:
\begin{enumerate}
	\item Estimation of the number of sets of graphs of fixed cardinality representable by universal graphs. A~set of graphs $\Mc$ is \emph{representable} by a~universal graph $U$, if every graph in $\Mc$ is an induced subgraph of $U$.
	Fix some positive integer $s$.
	A~direct estimation shows that the number of sets of cardinality $k_n := n^{2s-1}$ of $n$-vertex graphs that are representable by a~$u_n$-vertex universal graph, with $u_n := n^s$ is at~most 
	\begin{equation}\label{eq:num-representable}
		2^{u_n^2} \cdot u_n^{nk_n} = 2^{n^{2s}} \cdot n^{s n^{2s}} = 2^{n^{2s} + s n^{2s} \log n}.
	\end{equation}
	
	\item Notion of \emph{$c$-tiny graphs} and its refinement \emph{$(c,\Yc,t)$-tiny graphs}. 
	We will construct our monotone tiny classes by taking the monotone closure of an appropriately chosen set of graphs.
	The monotonicity and tinyness of target classes impose a~natural restriction on graphs that can be used in such constructions.
	To explain, let $\Cc$ be a~monotone and tiny class, i.e., for every graph
	$G \in \Cc$ all subgraphs of $G$ are also in $\Cc$, and  for every natural $n$, the class $\Cc$ contains at most~$c^n$ unlabeled $n$-vertex graphs for some constant $c$.
	This implies that for any $G \in \Cc$ and every $k$, the number of unlabeled $k$-vertex subgraphs of $G$ is at most $c^k$.
	We call graphs possessing this property \emph{monotone $c$-tiny}.
	
	This notion, however, is not strong enough for our purposes.
	Indeed, while each
	monotone $c$-tiny graph contributes to the monotone closure an appropriate number of graphs at every \emph{level} (i.e., on every number of vertices), we build our desired classes by taking the subgraph-closure of \emph{infinitely} many of such graphs,
	and this can result in some levels having too many graphs.
	To overcome this difficulty, we introduce the notion of \emph{monotone $(c,\Yc,t)$-tiny} graphs, which are monotone $c$-tiny $n$-vertex graphs with the extra restriction that all their subgraphs on at most $t(n)$ vertices belong to a~\emph{fixed} tiny class~$\Yc$.
	
	\item Construction of monotone tiny classes from sets of monotone $(c,\Yc,t)$-tiny graphs.
	For some suitable $\Yc$ and $t$, we show that for any constant $c$ and sequence $(\Mc_{_{\ell_i}})_{i \in \Nn}$, where $\ell_i\leq t(\ell_{i+1})$ and each $\Mc_{\ell_i}$ is a~set of monotone $(c,\Yc,t)$-tiny $\ell_i$-vertex graphs of cardinality~$k_{\ell_i}$ (recall that $k_n = n^{2s-1}$), the monotone closure $\mon(\cup_{i \in \Nn} \Mc_{\ell_i})$ is a~tiny class.
	
	\item Lower bound on the number of sets of cardinality $k_n$ of monotone $(c,\Yc,t)$-tiny $n$-vertex graphs. 
	We show that for any constant $d$ there exists a~constant $c$ such that the number of unlabeled monotone $(c,\Yc,t)$-tiny $n$-vertex graphs grows as $n^{\frac{(d-2)n}{2} (1-o(1))}$, and thus the number of sets of cardinality $k_n$ of $(c,\Yc,t)$-tiny $n$-vertex graphs is at least 
	\begin{equation}\label{eq:num-good-sets}
		2^{\Omega\left( k_n \frac{(d-2)n}{2} \log n \right)} = 2^{\Omega\left(\frac{d-2}{2} n^{2s} \log n \right)}.
	\end{equation}
	\label{itme:many-sets}
\end{enumerate}

By choosing $d$ larger than $2s+2$, it can be seen that \cref{eq:num-good-sets} is larger than \cref{eq:num-representable}. 
Therefore, there exists a~monotone tiny class $\mon(\cup_{i \in \Nn} \Mc_{\ell_i})$ that is not representable by a~universal graph of size~$n^s$.

\paragraph{Many $(c,\Yc,t)$-tiny graphs.}
A core step in the above approach is to show that for any constant~$d$ there exists a~constant
$c$ such that the number $(c,\Yc,t)$-tiny graphs grows as $2^{\frac{d-2}{2}(1-o(1))n\log n}$.
To do so, we show that a~random graph $G \sim G(n, d/n)$ is $(c,\Yc,t)$-tiny w.h.p.. 
For this we set $t(n) := \ln^2 n$ and choose $\Yc$ to be a~monotone class of graphs in which the number of edges does not exceed the number of vertices by a~large margin.
First, we show that $\Yc$ is tiny.
Next, we show that w.h.p. every subgraph of $G$ on at most $t(n)$ vertices belongs to $\Yc$.
Finally, we show that for any $k \geq t(n)$ there are at most exponentially many unlabeled subgraphs of $G$ on $k$ vertices. 
This is achieved by computing the expected number of such subgraphs which are connected and have a~fixed number of edges, and then by showing that these numbers are sufficiently concentrated.

\paragraph{Barrier for $c$-tinyness of random graphs.}
Our lower bound on the size of labeling schemes is dependent on the number of $(c,\Yc,t)$-tiny graphs,
in the sense that the more $(c,\Yc,t)$-tiny $n$-vertex graphs we have, the better our lower bound is. In particular, it is not excluded that using our approach a~lower bound can be made sufficiently strong to disprove the IGC with a~tiny class.
One would then need to show that there are superfactorially many, i.e., $n^{\omega(n)}$, $(c,\Yc,t)$-tiny $n$-vertex graphs. 
We provide evidence suggesting that this cannot be achieved via random graphs.
More specifically, we show that for any fixed $c$ and any $p = \omega(1/n)$
w.h.p. a~random graph drawn from $G(n, p)$ is \emph{not} $c$-tiny. 

Our strategy is as follows. 
First, we observe that for a~large enough $k=o(n)$, w.h.p. a~typical $k$-vertex induced subgraph of $G(n,p)$ has at least linearly in $n$ many edges.
Next, assuming that the number of unlabeled $k$-vertex induced subgraphs (i.e., the number of isomorphism classes of $k$-vertex induced subgraphs) is small, we conclude that there is a~\emph{large} isomorphism class, i.e., an isomorphism class that is represented by many $k$-vertex induced subgraphs.
Then, we show that w.h.p. two induced $k$-vertex subgraphs with a~``small'' vertex intersection cannot be isomorphic due to the linear in $n$ lower bound on the number of edges in each. 
Hence, the $k$-subsets of vertices inducing the representatives of the large isomorphism class
should each have a~``large'' intersection with a~fixed $k$-subset.
To reach a~contradiction, we show that the total number of $k$-subsets that have ``large''
intersections with a~fixed $k$-subset is much smaller than the number of the representatives of
the large isomorphism class.


Our proof provides an explicit lower bound on the number of $k$-vertex induced subgraphs in $G(n,p)$.
From this bound we derive a~coarse $\Theta(1/n)$-threshold for $G(n,p)$ to have exponentially many unlabeled induced subgraphs. Furthermore, it turns out that the property of having exponentially many unlabeled (not necessarily induced) subgraphs has the same (coarse) threshold.

\medskip

\textbf{Organization.}
The rest of the paper is organized as follows.
\cref{sec:notation} contains classic definitions and inequalities, as well as the crucial definitions of (monotone) $c$-tiny and (monotone) $(c,\Yc,t)$-tiny graphs.
\cref{sec:Gnp-subgraphs} shows upper and lower bounds on the number of unlabeled subgraphs of Erd\H{o}s--Rényi random graphs.
Note that only the upper bound is used in the proof of our main result, Theorem~\ref{thm:bdddeglabelings}.
In \cref{sec:monotone-tiny} we give the construction of our complex monotone tiny classes.
Finally we discuss some open problems in~\cref{sec:discussion}.

To establish the proof of Theorem~\ref{thm:bdddeglabelings}, one only needs to go through~\cref{sec:notation,sec:upper-bound,sec:general-framework,subsec:lbls}.
	\section{Preliminaries, inequalities, and tinyness}
	\label{sec:notation}
		
	We begin with some routine definitions in \Cref{sec:basics}, then proceed with a~number of standard but useful inequalities in \Cref{sec:ineq}. Finally, \Cref{sec:tiny} introduces our new notions of tinyness for graphs and classes; these definitions are key to how we build our classes and prove our main result.  
	
	\subsection{Standard definitions and notation }\label{sec:basics}
	For two real numbers $i,j$, we let $[i,j]:=\{\lceil i\rceil,\lceil i\rceil+1,\ldots,\lfloor j \rfloor-1,\lfloor j\rfloor \}$.
	Note that if $j < i$, then $[i,j]$ is the empty set.
	We may use $[i]$ as a~shorthand for $[1,\lfloor i\rfloor ]$, and $\ln^2 x$ as a~shorthand for $(\ln x)^2$. We use the notation $X\sim \mathcal{D}$ to denote that the random variable $X$ has distribution $\mathcal{D}$. We say that a sequence of events $(A_n)$ holds \emph{with high probability (w.h.p.)} if $\Pr{A_n}\rightarrow 1$ as $n\rightarrow \infty$.

	\paragraph{Graphs.} We consider finite undirected graphs, without loops or multiple edges.
	Given a~graph~$G$, we write~$V(G)$ for its vertex set, and~$E(G)$ for~its edge set.	
	A graph $H$ is a~\emph{subgraph} of~$G$ if $V(H) \subseteq V(G)$ and $E(H) \subseteq E(G)$.
	Thus, $H$ can be obtained from $G$ by vertex and edge deletions.
	The graph $H$ is an \emph{induced subgraph} of $G$ if $V(H) \subseteq V(G)$,
	and~$E(H)$ consists exactly of the edges in~$E(G)$ with both endpoints in~$V(H)$.
	In that case, $H$ can be obtained from $G$ by vertex deletions only. In the usual way, for a~set of vertices $U\subseteq V(G)$, we denote by $G[U]$ the induced subgraph of $G$ with the set of vertices $U$.
	We denote by $s_k(G)$ and $i_k(G)$ the numbers of unlabeled $k$-vertex subgraphs and of unlabeled $k$-vertex induced subgraphs of $G$, respectively.
	For $k\geq 1 $ and $t\geq 0$, we denote by $s_k(G,t)$ the number of unlabeled $k$-vertex $t$-edge subgraphs of $G$. 
	
	When we refer to an $n$-vertex graph $G$ as \emph{labeled}, we mean that the vertex set of $G$
	is $[n]$, and we distinguish two different labeled graphs even if they are isomorphic.
	In contrast, if we refer to $G$ as \emph{unlabeled} graph, its vertices are indistinguishable and
	two isomorphic graphs correspond to the same unlabeled graph. For this reason, we will sometimes refer to unlabeled graphs as \emph{isomorphism classes}.

	\paragraph{Graph Classes.} A~class of graphs is \emph{hereditary} if it is closed under taking induced subgraphs, and it is \emph{monotone} if it closed under taking subgraphs.
	For a~set $\mathcal{C}$ of graphs we let $\her(\mathcal{C})$ denote the hereditary closure of $\mathcal{C}$, i.e., the inclusion-wise minimal hereditary class that contains $\mathcal{C}$; and $\mon(\mathcal{C})$ denote the monotone closure of $\mathcal C$, i.e., the minimal monotone class that contains $\mathcal{C}$. 
	A graph class is \emph{small} if it is hereditary and there exists a~constant $c$ such that the number
	of $n$-vertex \emph{labeled} graphs in the class is at most $n!c^n$ for every $n \in \Nn$.
	
	
	\subsection{Inequalities} \label{sec:ineq} 
	We will make use of the inequalities 
	$\left( \frac{n}{k}\right)^k\leq \binom{n}{k} \leq \left( \frac{ne}{k}\right)^k$; see for example \cite[Lemma 2.7]{FriezeKaronski}. One such example is to prove the following bound on the number of sparse graphs. 
	
	\begin{lemma}\label{lem:sparseissmall}
		For any $k\geq 1$, the number of unlabeled \emph{connected} $k$-vertex graphs with at most $k-1+ \frac{k}{\ln k}$ edges is less than $100^k$. 
	\end{lemma}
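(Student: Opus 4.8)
The plan is to count unlabeled connected graphs on $k$ vertices with at most $k-1+\frac{k}{\ln k}$ edges by stratifying according to the \emph{excess} $\ell := |E| - (k-1) \ge 0$, i.e. by how far the edge count exceeds that of a spanning tree. A connected graph with $k$ vertices and $k-1+\ell$ edges can be described (wastefully, but sufficiently) by choosing a spanning tree together with the $\ell$ extra edges; since we only want an upper bound on the number of \emph{labeled} such graphs — and hence, a fortiori, on the number of unlabeled ones — I would use Cayley's formula $k^{k-2}$ for the number of labeled spanning trees and then pick the $\ell$ additional edges from the at most $\binom{k}{2}$ possible pairs, giving at most $k^{k-2}\binom{\binom{k}{2}}{\ell}$ labeled graphs with excess exactly $\ell$. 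Passing to unlabeled graphs divides by at most... nothing reliable, so I would instead directly bound the number of \emph{unlabeled} connected graphs with excess $\ell$: each is obtained from an unlabeled tree on $k$ vertices (there are at most $k^{k-2}$, crudely, or one can use the better bound $\le 4^k$, but $k^{k-2}$ as a labeled count already suffices after dividing by $k!$) — cleaner is to bound labeled graphs and divide by $k!$ at the end. Concretely: the number of unlabeled connected $k$-vertex graphs with exactly $k-1+\ell$ edges is at most $\frac{1}{k!}\,k^{k-2}\binom{\binom k2}{\ell}$.

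Summing over $\ell$ from $0$ to $L := \lfloor k/\ln k\rfloor$, the total is at most
\[
\frac{k^{k-2}}{k!}\sum_{\ell=0}^{L}\binom{\binom k2}{\ell}
\;\le\; \frac{k^{k-2}}{k!}\,(L+1)\binom{\binom k2}{L},
\]
where I used that $\binom{\binom k2}{\ell}$ is increasing in $\ell$ on the relevant range (since $L \ll \binom k2 /2$). Now $\frac{k^{k-2}}{k!}\le \frac{k^k}{k!}\le e^k$ by the standard inequality $k!\ge (k/e)^k$. For the binomial term, using $\binom{m}{\ell}\le (me/\ell)^\ell$ with $m=\binom k2\le k^2/2$ and $\ell=L\le k/\ln k$, I get
\[
\binom{\binom k2}{L}\le\Bigl(\frac{k^2 e/2}{L}\Bigr)^{L}
\le\Bigl(\frac{e\,k\ln k}{2}\Bigr)^{k/\ln k}
= \exp\!\Bigl(\frac{k}{\ln k}\bigl(\ln k + \ln\ln k + \ln(e/2)\bigr)\Bigr)
\le \exp\bigl((1+o(1))k\bigr),
\]
since $\frac{k}{\ln k}\cdot\ln k = k$ and the lower-order terms $\frac{k\ln\ln k}{\ln k}$ and $\frac{k}{\ln k}$ are $o(k)$. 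The factor $L+1\le k+1$ is subexponential and absorbed. Multiplying, the total is at most $e^k\cdot(k+1)\cdot e^{(1+o(1))k} = e^{(2+o(1))k}$, which is less than $100^k$ for all sufficiently large $k$ (as $e^2\approx 7.39 < 100$, with comfortable slack for the $o(1)$ and polynomial factors); the finitely many small $k$ are checked by hand or absorbed by choosing the implied constants.

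The main obstacle — really the only delicate point — is making the monotonicity of $\ell\mapsto\binom{\binom k2}{\ell}$ and the bookkeeping of the $o(1)$ terms precise enough that the final constant genuinely beats $100$ for \emph{every} $k\ge 1$, not just asymptotically; this forces either an explicit small-$k$ check or a slightly more careful constant-tracking in the inequalities $k!\ge(k/e)^k$ and $\binom mL\le (me/L)^L$. A secondary subtlety is whether to bound labeled graphs and divide by $k!$, or to bound unlabeled trees directly; the former is cleanest since Cayley's formula and the division by $k!$ introduce no automorphism-counting headaches, and the crude loss is harmless given the exponential slack between $e^2$ and $100$. Everything else is routine estimation with the inequalities already collected in \Cref{sec:ineq}.
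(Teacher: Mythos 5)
There is a genuine gap at the point where you pass from labeled to unlabeled counts: dividing a labeled count by $k!$ gives a \emph{lower} bound on the number of isomorphism classes, not an upper bound, because an unlabeled graph $G$ corresponds to only $k!/|\mathrm{Aut}(G)|$ labeled copies, and graphs with large automorphism groups (stars, paths, most trees) have far fewer than $k!$ of them. Your intermediate claim that the number of unlabeled connected $k$-vertex graphs with excess $\ell$ is at most $\frac{1}{k!}\,k^{k-2}\binom{\binom{k}{2}}{\ell}$ is in fact \emph{false} already for $\ell=0$ and large $k$: by Otter's asymptotics the number of unlabeled trees on $k$ vertices grows like $c\,\alpha^k k^{-5/2}$ with $\alpha\approx 2.956$, whereas $k^{k-2}/k!\sim e^k/(k^2\sqrt{2\pi k})$ and $e<\alpha$. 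So this is not a bookkeeping issue that more careful constant-tracking can repair; the division by $k!$ has to go, and replacing it by nothing leaves you with the superexponential factor $k^{k-2}$.

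The fix is exactly the alternative you mention in passing and then discard: bound the number of \emph{unlabeled} spanning trees directly. The paper's proof uses the bound (due to Otter) that there are at most $k\cdot 4^k$ unlabeled trees on $k$ vertices, multiplies by $\sum_{i\leq k/\ln k}\binom{\binom{k}{2}}{i}\leq k\cdot\left(ek\ln k/2\right)^{k/\ln k}<k\cdot k^{2k/\ln k}=k\cdot e^{2k}$ for the placement of the at most $k/\ln k$ extra edges (the same estimate as yours, but with an explicit constant in place of $e^{(1+o(1))k}$), and concludes with $k^2\cdot(4e^2)^k\leq 3^k\cdot 30^k\leq 100^k$ for every $k\geq 1$, using $4e^2\leq 29.6$. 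This also resolves your secondary worry: since the lemma is asserted for all $k\geq 1$, the ``$e^{(2+o(1))k}$ plus a hand-check of small $k$'' ending must be replaced by explicit constants, which the above computation supplies with ample slack. Your stratification by excess and your estimate of the binomial sum are otherwise sound and coincide with the paper's.
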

	
	\begin{proof}
		A~connected $k$-vertex graph with at most $k-1+ \frac{k}{\ln k}$ edges consists of a~spanning tree (of $k-1$ edges) plus at most $\frac{k}{\ln k}$ edges.
		Recall that the number of unlabeled trees on $k \geq 1$ vertices is at most~$k \cdot 4^k$~\cite{TreesOtter}.
		The number of ways to choose the remaining at most $\frac{k}{\ln k}$ edges is bounded from above by
		\[\sum_{i=0}^{\frac{k}{\ln k}}\binom{\binom{k}{2}}{i} \leq \frac{k}{\ln k}\cdot \binom{\binom{k}{2}}{\frac{k}{\ln k}}\leq k\cdot \left( \frac{k^2e   }{2\frac{k}{\ln k}} \right)^{\frac{k}{\ln k}} < k\cdot k^{2 \frac{k}{\ln k}} = k \cdot e^{2k}.\]
		The last inequality holds since $e \ln k < 2k$ for every $k \geq 1$. The result follows since \[(k \cdot 4^k) \cdot (k \cdot e^{2k})\leq k^2 \cdot 30^k \leq 3^k \cdot 30^k \leq 100^k,\] for every $k \geq 1$ as $4e^2\leq 29.6$.
	\end{proof}
	
	We use the following form of Chernoff bound, which is an immediate consequence of ~\cite[Theorem 2.1]{Janson}.
	
	\begin{lemma}[Chernoff bound]\label{lem:Chb}
		Let $\xi$ be a~binomial random variable with $\mathbb{E}\xi=\mu$ and let $t\geq 0$. Then
		$$
		\Pr{|\xi - \mu| \geq t}\leq2\exp\left[-\frac{t^2}{2(\mu+t/3)}\right].
		$$
	\end{lemma}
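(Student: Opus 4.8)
The plan is to prove the Chernoff bound stated in \Cref{lem:Chb} by directly invoking \cite[Theorem 2.1]{Janson}, which provides one-sided deviation bounds for sums of independent Bernoulli (or more generally bounded) random variables, and then combining the upper- and lower-tail estimates via a union bound.

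First I would recall the precise statement being cited: for a binomial $\xi$ with $\mathbb{E}\xi = \mu$ and any $t \geq 0$, Janson's theorem gives the upper-tail estimate $\Pr{\xi \geq \mu + t} \leq \exp\!\left[-\frac{t^2}{2(\mu + t/3)}\right]$ and the lower-tail estimate $\Pr{\xi \leq \mu - t} \leq \exp\!\left[-\frac{t^2}{2\mu}\right]$. The key observation is that the denominator $2\mu$ in the lower tail is at most $2(\mu + t/3)$, so the lower-tail bound is also dominated by $\exp\!\left[-\frac{t^2}{2(\mu + t/3)}\right]$. I would state this comparison explicitly since it is the only genuine (if trivial) manipulation in the argument.

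Next I would write $\Pr{|\xi - \mu| \geq t} \leq \Pr{\xi \geq \mu + t} + \Pr{\xi \leq \mu - t}$ and bound each term by $\exp\!\left[-\frac{t^2}{2(\mu + t/3)}\right]$ using the two estimates above, yielding the factor of $2$ in front. This completes the proof. There is essentially no obstacle here: the content is a standard repackaging of a known concentration inequality into the symmetric two-sided form that will be convenient later in the paper, and the only thing to be careful about is matching the exact normalization (base-$e$ exponentials, the $t/3$ correction term) used in the cited source. If one preferred not to cite Janson, an alternative would be to derive both tails from the Cramér–Chernoff method applied to the moment generating function of a binomial, but citing \cite[Theorem 2.1]{Janson} is cleaner and is what the excerpt signals.
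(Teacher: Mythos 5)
Your derivation is correct and is exactly what the paper intends: the paper gives no written proof, simply asserting the lemma is an immediate consequence of Janson's Theorem 2.1, and your argument (bounding the lower tail's $\exp\left[-t^2/(2\mu)\right]$ by $\exp\left[-t^2/(2(\mu+t/3))\right]$ since the denominator only grows, then applying a union bound over the two tails to get the factor of $2$) is the standard and correct way to make that "immediate consequence" explicit.
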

	The following reduces the estimation of the number of graphs in a~class to that of its number of \emph{connected} graphs, whenever it is convenient.

	\begin{lemma}\label{lem:connected}
		Let $\Cc$ be a~hereditary class of graphs such that, for every $n$, the number of unlabeled \emph{connected} $n$-vertex graphs in $\Cc$ is at most $\alpha^n$ for some constant $\alpha$. Then, for every $n$, the number of \emph{all} unlabeled $n$-vertex graphs in $\Cc$ is at most $\beta^n$, for some $\beta:=\beta(\alpha)$.
	\end{lemma}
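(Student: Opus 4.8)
The plan is to use the standard generating-function / exponential-formula style argument that passes from connected graphs to all graphs in a hereditary (in fact just "closed under disjoint union of its members'' is what one really needs here, but hereditary suffices since a hereditary class is closed under taking the components of its members) class. Write $c_n$ for the number of unlabeled connected $n$-vertex graphs in $\Cc$ and $g_n$ for the number of all unlabeled $n$-vertex graphs in $\Cc$, with $g_0=1$. Every unlabeled $n$-vertex graph in $\Cc$ decomposes uniquely (up to reordering) into its connected components, each of which lies in $\Cc$ because $\Cc$ is hereditary; conversely any multiset of connected members of $\Cc$ with sizes summing to $n$ assembles into a graph of $\Cc$ (here we use that a hereditary class need not be closed under disjoint union in general, so strictly speaking this direction only gives an \emph{upper} bound on $g_n$, which is all we want). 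Hence $g_n$ is at most the number of multisets of connected graphs, weighted by $c_k$ possibilities in size $k$, whose sizes sum to $n$.

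Concretely, I would bound $g_n$ by the coefficient of $x^n$ in the product $\prod_{k\ge 1}(1-x^k)^{-c_k}$, i.e. by $\sum$ over partitions of $n$. A clean way to avoid partition bookkeeping: since $c_k\le\alpha^k$, crudely bound the number of components of each size and the choices within each size. One convenient route is the inequality $g_n \le \sum_{k=1}^{n} c_k\, g_{n-k}$ (condition on the component containing, say, a fixed "smallest'' vertex — in the unlabeled setting one instead conditions on one component of smallest size, which is why this is an inequality and may overcount, but overcounting is harmless for an upper bound). From $g_n\le\sum_{k=1}^n\alpha^k g_{n-k}$ with $g_0=1$ one shows by induction that $g_n\le\beta^n$ for a suitable $\beta=\beta(\alpha)$: plugging the inductive hypothesis in gives $g_n\le\sum_{k=1}^n\alpha^k\beta^{n-k}=\beta^n\sum_{k=1}^n(\alpha/\beta)^k\le\beta^n\cdot\frac{\alpha/\beta}{1-\alpha/\beta}$, and choosing $\beta\ge 2\alpha$ (say) makes the geometric sum at most $1$, closing the induction. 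One must also handle $\alpha<1$ or $\alpha$ near $1$ separately — simply replace $\alpha$ by $\max\{\alpha,2\}$ at the outset, since $c_k\le\alpha^k\le(\max\{\alpha,2\})^k$, so we may assume $\alpha\ge 2$ and take $\beta=2\alpha$.

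The only genuinely delicate point — the part I would be most careful about — is justifying the recursion $g_n\le\sum_{k=1}^n c_k g_{n-k}$ for \emph{unlabeled} graphs and with a \emph{merely hereditary} (not union-closed) class. The resolution is exactly as indicated: for the upper bound one does not need a bijection, only a surjection from the set of pairs "(a connected unlabeled graph $H$ of some size $k\le n$ in $\Cc$, an unlabeled graph on $n-k$ vertices in $\Cc$)'' onto the set of unlabeled $n$-vertex graphs in $\Cc$; such a surjection exists because each $n$-vertex $G\in\Cc$ has at least one component (pick any, e.g. one of minimum order, choosing canonically) $H$, which lies in $\Cc$ by heredity, and $G-V(H)$ lies in $\Cc$ by heredity, so $G$ is the image of $(H, G-V(H))$. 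That every relevant smaller graph is back in $\Cc$ is exactly heredity, and the fact that $G-V(H)$ together with $H$ need not recombine to an element of $\Cc$ in general is irrelevant since we go the other way. I would then write up the induction of the previous paragraph cleanly, and conclude with $\beta:=2\max\{\alpha,2\}$, noting the base case $g_0=1\le\beta^0$ and $g_1\le\alpha\le\beta$.
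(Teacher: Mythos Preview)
Your proof is correct but takes a genuinely different route from the paper's. The paper fixes an integer partition $k_1+\cdots+k_r=n$, observes that the number of unlabeled $n$-vertex graphs in $\Cc$ with components of these sizes is at most $\prod_i\alpha^{k_i}=\alpha^n$, and then sums over all partitions of $n$, invoking the Hardy--Ramanujan--Erd\H{o}s bound $p(n)<\exp(\pi\sqrt{2n/3})$ to conclude that the total is at most $p(n)\,\alpha^n\le\beta^n$.

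Your argument instead sets up the recursion $g_n\le\sum_{k=1}^n c_k\,g_{n-k}$ (by peeling off one component, using heredity to keep both pieces in $\Cc$) and closes it by induction with $\beta=2\max\{\alpha,2\}$. This is entirely self-contained---no external partition-function estimate---and gives an explicit $\beta$, whereas the paper's route is shorter to write but imports a nontrivial analytic bound. Either is fine here; your handling of the one delicate point (that only an upper bound is needed, so the class need not be union-closed and the map from pairs need only hit every $G\in\Cc$) is exactly right.
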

	\begin{proof}For any $k_1, k_2, \ldots, k_r > 0$ such that $k_1+ k_2+ \cdots + k_r =k$, the number of unlabeled \mbox{$k$-vertex} graphs in $\Cc$ with $r$ connected components of sizes $k_1, k_2, \ldots, k_r$, respectively, is at most $\prod_{i=1}^r\alpha^{k_i} = \alpha^k$.
		Furthermore, since the \emph{partition function} $p(k)$ (counting the number of partitions of a~given positive integer $k$ as a~sum of positive integers) satisfies $p(n) <  \exp(\pi\cdot  \sqrt{2k/3})$~\cite[Eq.~(4)]{Erdos}. We conclude that the total number of \mbox{$k$-vertex} graphs in $\Cc$ is bounded above by $\beta^k$ for some $\beta:=\beta(\alpha)$.
	\end{proof}

	\subsection{Definitions of tinyness for graphs and classes}\label{sec:tiny}

	\begin{definition}[Tiny class]\label{def:tiny}
		We say that a~class $\mathcal C$ of graphs is \emph{tiny} if it is hereditary, and there is a~constant $c$ such that
		the number of unlabeled $n$-vertex graphs in $\mathcal C$ is at most $c^n$ for every $n \in \Nn$.
	\end{definition}
	
	Our next definition can be thought of as a~specialization of the definition of tiny classes to graphs.
	In particular, both the hereditary closure of a~\emph{$c$-tiny graph} and the monotone closure of a~\emph{monotone $c$-tiny graph} give a~tiny class.

	\begin{definition}[$c$-tiny graphs]\label{def:c-small}
		Let $c$ be a~constant.
		An $n$-vertex graph $G$ is \emph{$c$-tiny}  if $i_k(G) \leq c^k$ holds for every $k \in [n]$.
		The graph $G$ is \emph{monotone $c$-tiny} if $s_k(G) \leq c^k$ holds for every $k \in [n]$.
	\end{definition}
	
	The next definition is a~strengthening of the previous one.
	
	\begin{definition}[$(c,\Yc,t)$-tiny graphs]\label{def:good}
		Let $c$ be a~constant, $\Yc$ be a~tiny class of graphs, and
		let $t : \Nn \rightarrow \Nn$ be a~non-decreasing and unbounded function. We say that an $n$-vertex graph $G$ is \emph{$(c,\Yc,t)$-tiny} (respectively, \emph{monotone $(c,\Yc,t)$-tiny}) if 
		\begin{compactenum}[(1)]
			\item\label{itm:good1} for every $k \in [t(n)]$, every $k$-vertex induced subgraph (respectively, subgraph) of $G$ is in $\Yc$; and
			\item\label{itm:good2} for any integer $t(n) <k \leq n $, it holds that $i_k(G) \leq c^k$ (respectively, $s_k(G) \leq c^{k}$).
		\end{compactenum}
	\end{definition}
	
	\begin{remark}\label{rem:connectedsuffices}
		We note that, since the sets of induced subgraphs and subgraphs of $G$ form hereditary classes,  in item (\ref{itm:good2}) of~\cref{def:good} the numbers of unlabeled induced subgraphs and subgraphs (i.e., $i_k(G)$ and $s_k(G)$, respectively) can be replaced with the numbers of unlabeled \emph{connected} induced subgraphs and subgraphs.
		This is because, by~\cref{lem:connected}, an exponential upper bound on the number of unlabeled \emph{connected} graphs implies an exponential bound on the number of arbitrary unlabeled graphs in any hereditary class. 
	\end{remark}
	
	The strengthening of the notion of $c$-tiny graphs to $(c,\Yc,t)$-tiny graphs will grant us a~lot of power when taking unions of many graphs of different sizes. In particular, when considering subgraphs of a~given size in the monotone closure of a~sequence of $(c,\Yc,t)$-tiny graphs, one does not need to worry about subgraphs coming from graphs significantly further along in the sequence (of much larger size) as all such subgraphs will belong to the fixed class $\mathcal{Y}$.
	In our application of $(c,\Yc,t)$-tiny graphs, the role of the fixed class $\Yc$ will be played by the following
	class $\Sc_d$ (where `s' stands for \emph{sparse}).
	\begin{definition}[Class $\Sc_d$]\label{def:classY}
		For any $d\geq 0$, let $\Sc_d$ be the class of all unlabeled graphs $G$ such that for every $k \in \left[1000^{10(d+1)}, |V(G)| \right]$, 
		every $k$-vertex subgraph of $G$ has at most $k - 1 + k/\ln k$ edges.
	\end{definition}
	
	We now show this class is suitable for use in the definition of monotone $(c,\Yc,t)$-tiny. 
	
	\begin{lemma}\label{lem:YmonTiny}
		For any fixed $d\geq 0$, the class $\Sc_d$ is monotone tiny.
	\end{lemma}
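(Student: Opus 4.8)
The plan is to show $\Sc_d$ is monotone (trivially, from its definition by a condition on all subgraphs, which is itself closed under taking subgraphs), hereditary (a monotone class is in particular hereditary), and that the number of unlabeled $n$-vertex graphs in $\Sc_d$ is at most $c^n$ for some constant $c = c(d)$. By \cref{lem:connected} (and \cref{rem:connectedsuffices}), it suffices to bound the number of unlabeled \emph{connected} $n$-vertex graphs in $\Sc_d$. Let $K := 1000^{10(d+1)}$ be the threshold appearing in \cref{def:classY}. First I would dispose of the small cases: for $n < K$, the number of connected $n$-vertex graphs is at most $2^{\binom{n}{2}} \leq 2^{\binom{K}{2}}$, a constant depending only on $d$, which is certainly at most $\alpha^n$ for $\alpha$ a large enough constant. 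So the interesting regime is $n \geq K$.

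For $n \geq K$, a connected graph $G \in \Sc_d$ on $n$ vertices satisfies (taking $k = n$ in the defining condition) $|E(G)| \leq n - 1 + n/\ln n$. Thus every connected $n$-vertex graph in $\Sc_d$ is a connected graph with at most $n - 1 + n/\ln n$ edges, and by \cref{lem:sparseissmall} the number of such unlabeled graphs is less than $100^n$. Hence the number of unlabeled connected $n$-vertex graphs in $\Sc_d$ is at most $100^n$ for all $n \geq K$, and at most a constant (bounded by $100^n$ as well, after possibly enlarging the base) for $n < K$. Applying \cref{lem:connected} with $\alpha = 100$, we conclude that the total number of unlabeled $n$-vertex graphs in $\Sc_d$ is at most $\beta^n$ for some constant $\beta = \beta(100)$, independent of $d$ — so $\Sc_d$ is tiny, and being monotone it is monotone tiny.

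The main (and only) subtlety is verifying that $\Sc_d$ is genuinely monotone and hereditary so that \cref{lem:connected} and \cref{def:tiny} apply; this is immediate since the membership condition ``every $k$-vertex subgraph has at most $k-1+k/\ln k$ edges, for all $k \in [K, |V(G)|]$'' is inherited by every subgraph $H$ of $G$: the relevant range of $k$ for $H$ is $[K, |V(H)|] \subseteq [K, |V(G)|]$, and every $k$-vertex subgraph of $H$ is also one of $G$. I do not anticipate a real obstacle here; essentially the whole content of the lemma is the combination of the edge bound at $k = n$ with \cref{lem:sparseissmall} and \cref{lem:connected}, all of which are already available.
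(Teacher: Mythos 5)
Your proof is correct and follows essentially the same route as the paper's: split into $n < 1000^{10(d+1)}$ (constantly many graphs) and $n \geq 1000^{10(d+1)}$ (apply the $k=n$ edge bound and \cref{lem:sparseissmall}), then invoke \cref{lem:connected}. One small quibble: your final constant is not independent of $d$ as you claim, since absorbing the $2^{\binom{K}{2}}$ term for small $n$ forces the base to grow with $d$ — but this is harmless, as the lemma is stated for fixed $d$.
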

	\begin{proof}
		The fact that the class $\Sc_d$ is monotone easily follows from its definition.
		
		By \Cref{lem:connected}, it suffices to upperbound the number of connected graphs in $\mathcal{S}_d$. Since each connected graph in $\Sc_d$ on $k$ vertices either has at most $k-1+k/\ln k$ edges, or $k\leq 1000^{10(d+1)}$, the number of connected unlabeled $k$-vertex graphs in $\mathcal{S}_d$ is bounded from above by \[ 100^k +2^{\binom{1000^{10(d+1)}}{2}}\leq \eta^k ,\] for some constant $\eta=\eta(d)$, due to~\cref{lem:sparseissmall}. 
	\end{proof}

	\section{Growth of the number of unlabeled subgraphs in $G(n,p)$}
	\label{sec:Gnp-subgraphs}
 In \Cref{sec:upper-bound} we show that w.h.p.\ a~sparse random graph is  monotone $(c,\Sc_d,\ln^2)$-tiny. This result is used to construct a~large family of monotone tiny classes in the proof of Theorem~\ref{thm:bdddeglabelings}. Then, in \Cref{sec:lower-bound}, we prove a~lower bound on the constant $c:=c(d)$ for which a~random graph with average degree $d$ can be $c$-tiny w.h.p. This result shows that (at least w.h.p.) $c$ must grow with $d$, and thus gives strong evidence that our construction cannot be so easily improved by taking random graphs of growing average degree. Finally, in \Cref{sec:threshold}, we show that $p= \Theta(1/n)$ is a~threshold for having an exponential number of unlabeled (induced) subgraphs in a~random graph $G(n,p)$ and relate this to previous results on the number of unlabeled subgraphs of a~given graph. 
\subsection{Upper bounds}
\label{sec:upper-bound}
Let $G(n,m)$ denote the uniform distribution on simple graphs with $n$~vertices and $m$~edges.	Our aim in this subsection is to prove the following result.

\begin{theorem} \label{thm:Gnmisgood} 
	For any fixed $d\geq 0$, there exists a~constant $c:=c(d)$, such that for $G_n\sim G\!\left(n,\left\lceil d(n-1)/2 \right\rceil \right)$ we have 
	\[\Pr{G_n\text{ is \emph{not} monotone $(c,\Sc_d,\ln^2)$-tiny} } < 200 \cdot \sqrt{\frac{d}{n}}.\]   		
\end{theorem}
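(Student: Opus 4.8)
The plan is to bound separately the probabilities that each of the two requirements on $G_n$ in \cref{def:good} fails --- (i) every subgraph on at most $\ln^2 n$ vertices lies in $\Sc_d$, and (ii) $s_k(G_n)\le c^k$ for all $\ln^2 n<k\le n$ --- and then take a union bound. Put $m:=\lceil d(n-1)/2\rceil$ and $p:=m/\binom n2$; then $p=\Theta(d/n)$, in particular $p\le 2d/n$ for $n\ge 3$, and since $m\le\binom n2$ the graph $G(n,m)$ satisfies the $G(n,p)$-style bounds that I will use: the probability that a fixed set of $t$ pairs are all edges is at most $p^t$, and the probability that a fixed $j$-set induces at least $\ell$ edges is at most $\binom{\binom j2}{\ell}p^\ell$. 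For (i), unfolding the definition of $\Sc_d$ shows that its failure is exactly the event that for some integer $j\in[1000^{10(d+1)},\ln^2 n]$ there is a $j$-set $S$ with $e(G_n[S])>j-1+j/\ln j$. For (ii), by \cref{rem:connectedsuffices} (that is, \cref{lem:connected}) it suffices to control, for every $k$, the number of unlabeled \emph{connected} $k$-vertex subgraphs of $G_n$; and for $k\le\ln^2 n$ --- provided (i) holds --- this number is at most the number of connected $k$-vertex graphs in $\Sc_d$, which is $\le\eta(d)^k$ by the proof of \cref{lem:YmonTiny}. So only $k>\ln^2 n$ calls for a genuinely probabilistic estimate.

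\emph{Bounding the failure of (i): a first moment.} With $\ell:=\lfloor j-1+j/\ln j\rfloor+1$, and absorbing the sum over edge counts $\ge\ell$ into a constant factor (legitimate since $p$ is small), the expected number of $j$-sets $S$ with $e(G_n[S])\ge\ell$ is at most
\[
2\binom nj\binom{\binom j2}{\ell}p^\ell\;\le\;2\Big(\tfrac{en}{j}\Big)^{\!j}\Big(\tfrac{ej}{2}\Big)^{\!\ell}\Big(\tfrac{2d}{n}\Big)^{\!\ell}\;=\;2\,(e^2d)^{j}\Big(\tfrac{ejd}{n}\Big)^{\!\ell-j},
\]
using $\binom{\binom j2}{\ell}\le(ej/2)^\ell$ (valid as $\ell\ge j$). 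Since $\ell-j\ge j/(3\ln j)$ while $j\le\ln^2 n$ makes $ejd/n\le n^{-1/2}$, this is $n^{-\Omega_d(j/\ln j)}$; summing over the $\le\ln^2 n$ values of $j$ and using that $x\mapsto x/\ln x$ is increasing gives at most $\ln^2 n\cdot n^{-A_d}$, where $A_d$ is the astronomically large constant $\Omega_d\!\left(1000^{10(d+1)}/\ln(1000^{10(d+1)})\right)$. In particular $A_d\gg\tfrac12$, so this is $o(\sqrt{d/n})$; and when $\ln^2 n<1000^{10(d+1)}$ the event is vacuous, so (i) fails with probability at most $\sqrt{d/n}$ throughout.

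\emph{Bounding the failure of (ii) for $k>\ln^2 n$.} Using $\Pr{H\subseteq G_n}\le\Ex{\#\{\text{copies of }H\text{ in }G_n\}}\le n^k p^{e(H)}$, the expected number of unlabeled connected $k$-vertex subgraphs of $G_n$ is at most
\[
\sum_{r\ge0}\big(\#\{\text{unlabeled connected $k$-vertex $(k-1+r)$-edge graphs}\}\big)\cdot n^k p^{\,k-1+r}.
\]
Exactly as in \cref{lem:sparseissmall}, the bracketed count is at most $(\#\{\text{unlabeled $k$-vertex trees}\})\cdot\binom{\binom k2}{r}\le k4^k\binom{\binom k2}{r}$, so summing over $r$ by the binomial theorem bounds the whole sum by $k4^k\cdot n^k p^{k-1}(1+p)^{\binom k2}$. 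Here $(1+p)^{\binom k2}\le e^{p\binom k2}\le e^{dk^2/n}\le e^{dk}$ since $k\le n$, and $n^k p^{k-1}\le n\,(2d)^{k-1}\le e^{k}(2d)^{k-1}$ since $k\ge\ln^2 n$ gives $n\le e^{\sqrt k}\le e^{k}$; hence the bound is at most $\mathrm{poly}(k)\cdot C(d)^k\le(c_1/2)^k$ for a suitable $c_1:=c_1(d)$. By Markov's inequality the number of connected $k$-vertex subgraphs exceeds $c_1^k$ with probability $\le 2^{-k}$, and summing over $k>\ln^2 n$ adds at most $2^{1-\ln^2 n}$ to the failure probability. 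Combining with the deterministic bound $\eta(d)^k$ for $k\le\ln^2 n$ and applying \cref{lem:connected} with $\alpha:=\max\{\eta(d),c_1(d)\}$ yields $s_k(G_n)\le c^k$ for every $k$, with $c:=\beta(\alpha)=c(d)$. Altogether $G_n$ fails to be monotone $(c,\Sc_d,\ln^2)$-tiny with probability at most the bound for (i) plus $2^{1-\ln^2 n}=o(\sqrt{d/n})$; and for the finitely many small $n$ with $200\sqrt{d/n}\ge1$ there is nothing to prove, so the factor $200$ leaves ample room.

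The main obstacle is \textbf{keeping every bound uniform in $n$}, which is what the explicit rate $200\sqrt{d/n}$, rather than a plain ``w.h.p.'', demands. Two points deserve care. First, the first-moment sum for (i) is dominated by its smallest index $j=1000^{10(d+1)}$, and it is precisely the enormous size of this constant --- measured against $\ln\ln n$, which bounds $\ln j$ throughout the range $j\le\ln^2 n$ --- that forces the exponent $A_d$ to exceed $\tfrac12$; this must be verified, not waved away by ``$n\to\infty$''. Second, a naive first moment over isomorphism types of $k$-vertex subgraphs is hopeless for $k$ close to $n$: there are $2^{\Theta(k^2)}$ connected $k$-vertex graphs and many of them embed into $G_n$ with probability $\approx1$. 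The remedy is to organise the count by the number of surplus edges and to exploit the sparsity of $G_n$, after which the ``density'' overhead $(1+p)^{\binom k2}$ and the ``volume'' overhead $n^k p^{k-1}$ are each only of the form $C(d)^k$ --- the former because $k\le n$, the latter because $k\ge\ln^2 n$. The passage between $G(n,m)$ and the $G(n,p)$-style estimates is routine.
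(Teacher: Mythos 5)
Your proof is correct and follows the same first-moment/Markov strategy as the paper, but it takes two genuinely simpler shortcuts. First, the paper reduces $G(n,m)$ to $G(n,p)$ via \cref{lem:transfer}, which supplies the $10\sqrt m$ factor that is the ultimate source of the $200\sqrt{d/n}$ in the statement; you instead work directly in $G(n,m)$ with $p=m/\binom n2$, observing that $\Pr{T\subseteq E(G_n)}=\prod_{i<|T|}\frac{m-i}{\binom n2-i}\leq p^{|T|}$ for any fixed set $T$ of pairs, so every upper-tail first-moment estimate from the $G(n,p)$ world transfers verbatim. This is both cleaner (no auxiliary transfer lemma) and gives a quantitatively much stronger probability bound; the ``$200\sqrt{d/n}$'' is then absurdly generous. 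Second, for Item (\ref{itm:good2}) of \cref{def:good} the paper splits by edge density, handling $\leq k-1+k/\ln k$ edges deterministically via \cref{lem:sparseissmall} and $\geq k+k/\ln k$ edges via \cref{lem:largekGnp} with a union bound over edge counts. You replace this with a single first moment: the expected number of unlabeled connected $k$-vertex subgraphs, organised by surplus edges $r$, factors cleanly via the binomial theorem as $k4^k\cdot n^kp^{k-1}\cdot(1+p)^{\binom k2}$, and the constraints $k>\ln^2 n$ and $k\leq n$ each tame one of the latter two factors to $C(d)^k$. This dispenses with the case split and with \cref{lem:largekGnp} altogether. The only small blemish is the claim $p\leq 2d/n$ for all $n\geq3$: for fixed $0<d<1$ the ceiling in $m=\lceil d(n-1)/2\rceil$ makes this fail for an unbounded range of $n$; the harmless fix is to use $p\leq d/n+2/(n(n-1))\leq(d+1)/n$, which changes only the constants $C(d)$ and $A_d$. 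You are also right that the point requiring genuine care is that the exponent $A_d$ in the Item-(\ref{itm:good1}) bound must beat $1/2$ uniformly in $n$, and that this is exactly what the choice of the gigantic threshold $1000^{10(d+1)}$ in \cref{def:classY} is for --- the paper verifies the analogous inequality at the start of its proof.
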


Recall that $G(n,p)$ denotes the distribution on $n$-vertex graph where each edge is included independently with probability $p$, i.e., a~fixed labeled graph $G$ on $[n]$ occurs with probability $p^{|E(G)|}(1-p)^{\binom{n}{2} -|E(G)|}$. We  work with $G(n,p)$ for most of this section as it is more convenient; this is not a~significant restriction as we can transfer the result to $G(n,m)$ via the following result.

\begin{lemma}\label{lem:transfer}
	Let $\mathcal{P}$ be any graph property (i.e., graph class) and $0\leq p\leq 1$ satisfy $p\binom{n}{2}\rightarrow \infty$ and $\binom{n}{2} -p\binom{n}{2}\rightarrow \infty$ and $m =\left\lceil p\binom{n}{2} \right\rceil $. Then, for $G_n \sim G(n,m)$ and $G_n' \sim G(n,p)$, we have  
	\[ \Pr{G_n\in \mathcal{P}} \leq 10 \sqrt{m}\cdot  \Pr{G_n' \in \mathcal{P}}. \]	
\end{lemma}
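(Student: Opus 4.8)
The plan is to prove \Cref{lem:transfer} by the standard coupling between $G(n,p)$ and $G(n,m)$ conditioned on the number of edges, together with a local central limit type estimate for the binomial. First I would write $\Pr{G_n' \in \mathcal{P}} = \sum_{m'} \Pr{e(G_n') = m'} \cdot \Pr{G_n \sim G(n,m') \in \mathcal P}$, using the elementary observation that conditioned on having exactly $m'$ edges, $G(n,p)$ is distributed exactly as $G(n,m')$. In particular, restricting the sum to the single term $m' = m$ gives
\[
\Pr{G_n' \in \mathcal{P}} \geq \Pr{e(G_n') = m} \cdot \Pr{G_n \sim G(n,m) \in \mathcal P},
\]
so it suffices to show $\Pr{e(G_n') = m} \geq \tfrac{1}{10\sqrt m}$ under the stated hypotheses.

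The next step is to bound this point probability of the binomial $e(G_n') \sim \mathrm{Bin}\left(\binom n2, p\right)$ from below at $m = \lceil p\binom n2 \rceil$, i.e.\ essentially at the mean. The cleanest route is to invoke a known anti-concentration bound: for $\xi \sim \mathrm{Bin}(N,p)$ and any integer $j$ with, say, $|j - Np|$ bounded and $\min\{Np, N(1-p)\} \to \infty$, one has $\Pr{\xi = j} = \Theta\left(1/\sqrt{Np(1-p)}\right)$, with the near-mean value maximised; see the local limit estimates for the binomial (e.g.\ as derived from Stirling's formula, or as stated in standard references such as \cite{Janson} or \cite{FriezeKaronski}). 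Here $N = \binom n2$, the hypotheses $p\binom n2 \to \infty$ and $\binom n2 - p\binom n2 \to \infty$ guarantee the asymptotic regime, and $m = \lceil p\binom n2\rceil$ differs from the mean by less than $1$, so $\Pr{e(G_n') = m} \geq c_0 / \sqrt{Np(1-p)} \geq c_0/\sqrt{m}$ for some absolute constant $c_0$, where the last inequality uses $Np(1-p) \leq Np = \binom n2 p \leq m$. Choosing the constant $10$ (rather than $1/c_0$) gives slack and matches the statement.

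Rearranging yields $\Pr{G_n \sim G(n,m) \in \mathcal P} \leq \tfrac{1}{\Pr{e(G_n') = m}} \cdot \Pr{G_n' \in \mathcal P} \leq 10\sqrt m \cdot \Pr{G_n' \in \mathcal P}$, which is the claim. The only mildly delicate point — the main obstacle, if any — is pinning down the constant in the lower bound on the binomial point mass uniformly for all $n$ and all $p$ in the allowed range, rather than just asymptotically; this is handled by either citing an explicit non-asymptotic local limit inequality or by a direct Stirling estimate, observing that $\Pr{\xi = \lfloor Np \rfloor}$ and $\Pr{\xi = \lceil Np \rceil}$ are each at least a constant times the maximal point probability, which in turn is $\geq 1/(2\sqrt{2\pi Np(1-p)} + O(1))$. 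Everything else is bookkeeping.
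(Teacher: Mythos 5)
Your proof is correct and is essentially the argument underlying the result the paper cites: the paper does not prove this lemma directly but instead notes it "follows by a very minor adaption of \cite[Lemma 3.2]{FriezeKaronski}" (the only change being a ceiling rather than a floor in $m$), and that reference's proof is exactly the conditioning-plus-binomial-local-limit argument you describe. So your write-up is a self-contained version of the same route, with the standard observation that $G(n,p)$ conditioned on $m$ edges is $G(n,m)$ and a Stirling-type lower bound $\Pr{e(G_n')=m}\geq 1/(10\sqrt m)$ near the mean.
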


\noindent
Lemma \ref{lem:transfer} follows by a~very minor adaption of \cite[Lemma 3.2]{FriezeKaronski}; the only difference is a~ceiling in the number of edges, which makes no difference in the proof.

Recall that for $k\geq 1 $ and $t\geq 0$,  $s_k(G,t)$ denotes the number of unlabeled subgraphs $H$ of a~graph $G$ such that $|V(H)|=k$ and $|E(H)|=t$. \Cref{lem:smallkGnp} deals with small subgraphs.  
\begin{lemma}\label{lem:smallkGnp}
	Let $d\geq 0$ be fixed, $n\geq 3$, and $ t\geq 2$  satisfy $3\ln ((d+1)e^2) \ln t \leq \ln n$.
	Then, for $G_n\sim G\left(n,\tfrac{d}{n}\right)$, we have \[\Pr{\sum_{k=2}^t\;\sum_{j\geq k+\frac{k}{\ln k}} s_k\!\left(G_n,j\right) >0}\leq  \frac{15}{n}. \]  
\end{lemma}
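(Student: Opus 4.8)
The plan is to bound the expectation of the random variable $\sum_{k=2}^t \sum_{j \geq k + k/\ln k} s_k(G_n, j)$ and then apply Markov's inequality, exploiting that in a sparse random graph a subgraph on $k$ vertices with many edges is a rare event. First I would fix $k \in \{2, \dots, t\}$ and $j \geq k + k/\ln k$, and estimate $\Ex{s_k(G_n,j)}$. The key observation is that each unlabeled $k$-vertex $j$-edge subgraph of $G_n$ arises from choosing a $k$-subset $S \subseteq V(G_n)$ together with a set of $j$ edges on $S$ all present in $G_n$; crudely, the number of such (labeled) choices is at most $\binom{n}{k}\binom{\binom{k}{2}}{j}$, and each specified edge set is present with probability $(d/n)^j$ since $G_n \sim G(n, d/n)$. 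Hence
\[
\Ex{s_k(G_n,j)} \leq \binom{n}{k}\binom{\binom{k}{2}}{j}\left(\frac{d}{n}\right)^j \leq \left(\frac{ne}{k}\right)^k \left(\frac{k^2 e}{2j}\right)^j \left(\frac{d}{n}\right)^j.
\]
Using the standard inequalities $\binom{n}{k} \leq (ne/k)^k$ and $\binom{\binom{k}{2}}{j}\leq (k^2 e/(2j))^j$ from \Cref{sec:ineq}, and collecting the powers of $n$, this is at most $n^{k-j}$ times a factor depending only on $d,k,j$, which is roughly $\left((d+1)e^2\right)^{j}$ after bounding $k^2/j \leq k \leq 2j$ and absorbing the $(ne/k)^k$ term.

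The decisive step is that $j \geq k + k/\ln k$ gives a genuine negative power of $n$: writing $j - k \geq k/\ln k$, one gets a factor $n^{-k/\ln k}$, which we must beat the combinatorial factor $\left((d+1)e^2\right)^{j}$ with. Since $j \leq \binom{k}{2} \leq k^2$, that combinatorial factor is at most $\left((d+1)e^2\right)^{k^2}$, and it suffices that $n^{k/\ln k}$ dominates $\left((d+1)e^2\right)^{k^2}$ with room to spare; taking logarithms, we need roughly $\frac{k}{\ln k}\ln n \geq k^2 \ln((d+1)e^2) + (\text{lower order})$, i.e. $\ln n \gtrsim k \ln k \cdot \ln((d+1)e^2)$. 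Since $k \leq t$ and we have assumed $3\ln((d+1)e^2)\ln t \leq \ln n$, and $k \ln k \leq t \ln t \le$ (a small multiple of $t \ln t$), this hypothesis is exactly tailored to make $\Ex{s_k(G_n,j)} \leq n^{-c_0 k}$ for some absolute constant $c_0 > 0$ — one should track the constants carefully so that the exponent beats both the $(ne/k)^k$ prefactor and leaves a net power like $n^{-2}$ at least. I would be slightly careful here: the clean bound is to show each term is at most, say, $n^{-2}/(\text{something summable})$, and the exponent $3$ in the hypothesis is presumably chosen to give exactly this slack after the sums over $k$ and $j$ are carried out.

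Finally I would sum: there are at most $t \leq n$ values of $k$ and at most $\binom{k}{2} \leq n^2$ values of $j$, so
\[
\Ex{\sum_{k=2}^t \sum_{j \geq k + k/\ln k} s_k(G_n,j)} \leq \sum_{k=2}^t \sum_{j} n^{-c_0 k} \leq \sum_{k=2}^\infty k^2 \cdot n^{-c_0 k} \leq \frac{15}{n},
\]
where the last inequality holds for all $n \geq 3$ once $c_0$ is, say, at least $2$ (the geometric-type series in $k$ is dominated by its first term $k=2$, contributing $O(n^{-2c_0}) = O(n^{-4})$, comfortably below $15/n$; the constant $15$ gives generous slack and absorbs the polynomial factors from the sum over $j$ and from the $(ne/k)^k$ prefactor at small $k$). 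Markov's inequality then gives $\Pr{\sum_{k=2}^t \sum_{j \geq k+k/\ln k} s_k(G_n,j) > 0} = \Pr{\sum \geq 1} \leq \Ex{\sum} \leq 15/n$, as claimed. The main obstacle is purely bookkeeping: verifying that the hypothesis $3\ln((d+1)e^2)\ln t \leq \ln n$ is strong enough to dominate simultaneously the $(ne/k)^k$ factor (worst at $k$ of order $\ln n / \ln\ln n$, where it is $n^{1+o(1)}$), the combinatorial edge-choice factor $\left((d+1)e^2\right)^j$ with $j$ as large as $\binom{k}{2}$, and still leave a net $n^{-\Omega(k)}$ — this requires treating small $k$ (say $k = O(1)$) and larger $k$ separately, since for small $k$ the term $(ne/k)^k = n^{O(1)}$ must be beaten by $n^{-k/\ln k}$ which is only $n^{-O(1/\ln k)}$, so one genuinely needs $j - k$ to be bounded below by a constant multiple of $k$ and not merely $k/\ln k$ when $k$ is tiny — but note $j \geq k + k/\ln k$ forces $j \geq k+1$ already for $k \geq 2$ (as $k/\ln k > 1$ iff $k > e$, so in fact one should check $k=2,3,4$ by hand, where $\lceil k/\ln k\rceil \geq 2,3,3$ respectively, giving $j - k \geq 2$), and for $k$ of order $\ln n$ the factor $n^{-k/\ln k} = n^{-\Omega(\ln n/\ln\ln n)}$ is super-polynomially small, easily swallowing everything. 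So the argument splits cleanly into a constant-$k$ regime and a growing-$k$ regime, and in each the stated hypothesis suffices.
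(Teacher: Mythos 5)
Your overall strategy (first moment of the count of dense $k$-vertex configurations, then Markov/union bound) is the same as the paper's, and your starting estimate $\Ex{s_k(G_n,j)}\leq\binom{n}{k}\binom{\binom{k}{2}}{j}(d/n)^j$ is exactly the bound the paper uses. But there is a genuine quantitative gap in the middle step, and it is not mere bookkeeping. You discard the $j$-dependence of the good factor by crediting only the minimal excess $n^{-(j-k)}\leq n^{-k/\ln k}$, while letting the bad combinatorial factor grow as $((d+1)e^2)^{j}$ with $j$ as large as $\binom{k}{2}$. The comparison you then need, $\tfrac{k}{\ln k}\ln n\gtrsim k^2\ln((d+1)e^2)$, i.e.\ $\ln n\gtrsim k\ln k\cdot\ln((d+1)e^2)$, is \emph{not} implied by the hypothesis $3\ln((d+1)e^2)\ln t\leq\ln n$: the hypothesis controls $\ln t$, not $t\ln t$. (For instance the hypothesis allows $t$ as large as a small power of $n$, where $t\ln t\cdot\ln((d+1)e^2)\gg\ln n$.) You notice this tension yourself when you write ``$k\ln k\leq t\ln t\le$ (a small multiple of $t\ln t$)'' and then assert the hypothesis is ``exactly tailored'' --- it is not, under your accounting. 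Relatedly, your claim that each term is $n^{-c_0k}$ with $c_0\geq 2$ is false: already at $k=5$ the dominant term is of order $n^{5-9}=n^{-4}$, not $n^{-10}$, and the paper only obtains $n^{-k/(3\ln k)}$ per value of $k$, which is why the final bound is $15/n$ rather than $n^{-2}$.

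The fix is to keep $j$ live in both factors rather than worst-casing them separately. Writing $j=k+t'$ with $t'\geq k/\ln k$, each term is at most
\[
\left(\frac{ne}{k}\right)^k\left(\frac{k^2ed}{2jn}\right)^{j}\leq (de^2)^{k+t'}\left(\frac{k}{n}\right)^{t'}=\exp\bigl(k\ln(de^2)-t'(\ln n-\ln(kde^2))\bigr),
\]
so each extra edge beyond the threshold costs another factor of $kde^2/n\ll1$ (here $k\leq t\leq n^{O(1/\ln((d+1)e^2))}$ by the hypothesis), and the sum over $j$ is dominated by its first term $j=\lceil k+k/\ln k\rceil$. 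This is exactly where the hypothesis enters: for $k\leq t$ it gives $\ln(kde^2)\leq\ln((d+1)e^2)\ln t\leq\tfrac13\ln n$, yielding a bound of $n^{-k/(3\ln k)}$ per $k$, which sums to $O(1/n)$ over $k\geq5$ (and the range $k\in\{2,3,4\}$ contributes nothing, since $k+k/\ln k>\binom{k}{2}$ there --- your ``$j-k\geq2$'' remark undersells this; those events are empty, not merely rare). The paper sidesteps the sum over $j$ entirely by union-bounding only over sets of exactly $\lceil k+k/\ln k\rceil$ edges, which is the cleaner route; either way, the step you need to repair is the one where the exponent of $(d+1)e^2$ is decoupled from the power of $1/n$.
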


\begin{proof}
	Observe there are $\binom{\binom{k}{2}}{k+t}$ ways of finding $k+t$ edges on a~$k$-vertex subset. Thus, the probability $G_n\sim G\left(n,\tfrac{d}{n}\right)$ has an $k$-vertex subgraph with at least $k+t$ edges is bounded from above by 
	\begin{align}\label{eq:p(s,t)}
		\Pr{\sum_{j\geq k+ t} s_k\!\left(G_n,j\right) >0} &\leq
		\binom{n}{k} \binom{\binom{k}{2}}{k+t}   \left(\frac{d}{n}  \right)^{k+ t}\notag \\    
		&\leq \left(\frac{ne }{k}\right)^k   \left(\frac{k^2 e}{2(k+t)}  \right)^{k+t} \left(\frac{d}{n}  \right)^{k+t}\notag \\  
		&\leq  \left(de^2 \right)^{k+t} \left(\frac{ k }{n }\right)^t\notag \\    
		&= \exp\left(k\ln \left( de^2 \right) - t\left( \ln n- \ln \left(kde^2\right) \right) \right).       \end{align} 
	
	Now, for any $k \leq t$ and $d\geq 0$, the assumptions on $t,d$ and $n$ give     
	\begin{equation*}\label{eq:logi}
		\ln (kde^2)  \leq \ln ((d+1)e^2)+\ln t \leq \ln ((d+1)e^2)\ln t \leq \frac{\ln n}{3}.
	\end{equation*}
	Thus, by \eqref{eq:p(s,t)}, if we set $t = \frac{k}{\ln k}$, then for any $k \leq t$,
	\begin{align}
		\Pr{\sum_{j\geq k+ \frac{k}{\ln k}} s_k\!\left(G_n,j\right) >0}&\leq  \exp\left(k\ln \left( de^2 \right) - \frac{k}{\ln  k }\cdot \left( \ln n- \ln \left(kde^2\right) \right) \right) \notag\\
		&\leq  \exp\left(k\ln \left( de^2 \right) - \frac{k}{\ln  k }\cdot \frac{2\ln n}{3} \right)\notag\\
		&=  \exp\left(\frac{k}{\ln k }\left(\ln \left( de^2 \right)\ln k -   \frac{2\ln n}{3} \right)\right)\notag\\
		& \leq \exp\left(- \frac{k}{\ln k }\cdot \frac{\ln n}{3} \right) = n^{- \frac{k}{3\ln k }}.\label{eq:pstbound}
	\end{align} 
	For $ k\in \{2,3,4\}$, all $k$-vertex graphs have less than $k+k/\ln k$ edges. Thus, by \eqref{eq:pstbound} and the union bound,   
	\begin{align*}
		\Pr{\sum_{k=2}^t\; \sum_{j\geq k+ \frac{k}{\ln k}} s_k\!\left(G_n,j\right) >0} &\leq 0+
		\sum_{k=5}^t\; \Pr{\sum_{j\geq k+ \frac{k}{\ln k}}  s_k\!\left(G_n,j\right) >0}  \leq \sum_{k=5}^t n^{- \frac{k}{3\ln k }} \notag \\
		&\leq   \frac{1}{n}\cdot \sum_{k=5}^\infty n^{- \frac{k}{3\ln k }+1}\notag\\
		&\leq \frac{15}{n},\notag \end{align*}where the last line follows since $n\geq 3$. 
\end{proof}

The following lemma covers any subgraphs of sufficiently large density. 

\begin{lemma}\label{lem:largekGnp}For any $d\geq 0$, there exists $\beta:=\beta(d)$ such that for any $n,k\geq 1$, $\delta\geq 1+1/\ln k$ such that $\delta k$ is an integer, and $G_n\sim G\left(n,\tfrac{d}{n}\right)$ we have \[\Pr{s_k\!\left(G_n,\delta k\right) \geq \beta^k} \leq  2^{-k}. \] 
\end{lemma}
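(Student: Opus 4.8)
The plan is to bound the expected number of unlabeled $k$-vertex subgraphs with exactly $\delta k$ edges by an explicit quantity, and then apply Markov's inequality. First I would observe that an unlabeled $k$-vertex $\delta k$-edge subgraph of $G_n$ arises from choosing a $k$-subset $S$ of $V(G_n)$ and a set of $\delta k$ edges inside $S$ that are all present in $G_n$; this overcounts each unlabeled graph, but that is fine for an upper bound. Hence, by linearity of expectation,
\begin{align*}
\Ex{s_k(G_n,\delta k)} \;\leq\; \binom{n}{k}\binom{\binom{k}{2}}{\delta k}\left(\frac{d}{n}\right)^{\delta k}.
\end{align*}
Using $\binom{n}{k}\leq (ne/k)^k$ and $\binom{\binom{k}{2}}{\delta k}\leq \left(\frac{k^2 e}{2\delta k}\right)^{\delta k} = \left(\frac{ke}{2\delta}\right)^{\delta k}$, this is at most
\begin{align*}
\left(\frac{ne}{k}\right)^k \left(\frac{ke}{2\delta}\cdot\frac{d}{n}\right)^{\delta k} \;=\; \left(\frac{ne}{k}\right)^k \left(\frac{kde}{2\delta n}\right)^{\delta k}.
\end{align*}

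The key point is to extract the factor $n^{(1-\delta)k}$ and show it is killed by $\delta \geq 1 + 1/\ln k$. Writing the bound as $e^{k}\,(d e/(2\delta))^{\delta k}\, k^{(\delta-1)k}\, n^{(1-\delta)k}$, and using $\delta - 1 \geq 1/\ln k$, we get $n^{(1-\delta)k}\leq n^{-k/\ln k}$ and, from the other side, we must control $k^{(\delta-1)k}$. Here I would split into two regimes. If $\delta$ is bounded (say $\delta \leq \ln k$), then $k^{(\delta-1)k}\leq n^{(\delta-1)k}$ is not quite what we want directly, so instead I'd compare $k^{(\delta-1)k}$ against $n^{(\delta-1)k/2}$: since we may assume $k$ is at most $n$ (a $k$-vertex subgraph requires $k\leq n$, else the probability is $0$), but more cheaply, note that for the bound to be nontrivial we need $k$ large, and $(kde/(2\delta))^{\delta k}$ combined with $n^{-(\delta-1)k}$: the dominant competition is $(k/n)^{(\delta-1)k}\leq (k/n)^{k/\ln k}$, which is at most $1$ whenever $k\leq n$, and the leftover $(de^2/2)^{\delta k}\cdot(\text{const})^k$ is at most $\beta_0^{\delta k}$ for a constant $\beta_0 = \beta_0(d)$. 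When $\delta$ is small (close to $1$) we still get $\beta_0^{\delta k}\leq \beta_0^{O(k)}$; when $\delta$ is large this is $\beta_0^{\delta k}$ which could exceed $\beta^k$ for fixed $\beta$ — so the real work is handling large $\delta$.

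The main obstacle is precisely the large-$\delta$ regime: when $\delta k$ is close to $\binom{k}{2}$, the factor $(d/n)^{\delta k}$ becomes extremely small (like $n^{-\Theta(k^2)}$) while $\binom{\binom{k}{2}}{\delta k}\leq 2^{\binom{k}{2}}$, and one must check the exponential-in-$k^2$ terms cancel so that the whole expression stays below $\beta^k$ for a constant $\beta$ independent of $\delta$. I expect the cleanest route is: from $\Ex{s_k(G_n,\delta k)}\leq \left(\frac{ne}{k}\right)^k\left(\frac{kde}{2\delta n}\right)^{\delta k}$, use $\delta\geq 1+1/\ln k$ to write the second factor as $\left(\frac{kde}{2\delta n}\right)^{k}\cdot\left(\frac{kde}{2\delta n}\right)^{(\delta-1)k}$; the term $\left(\frac{ne}{k}\right)^k\left(\frac{kde}{2\delta n}\right)^{k} = \left(\frac{de^2}{2\delta}\right)^k \leq \left(\frac{de^2}{2}\right)^k$ is $\beta_1^k$ with $\beta_1=\beta_1(d)$; and the remaining term $\left(\frac{kde}{2\delta n}\right)^{(\delta-1)k}$ is at most $1$ because $\frac{kde}{2\delta n}\leq 1$ for all $k\leq n$ once, say, $de/2\leq 1$ — which need not hold, so instead bound it by $\left(\frac{kde}{2n}\right)^{(\delta-1)k}$ and note this is $\leq \left(\frac{de}{2}\right)^{(\delta-1)k}(k/n)^{(\delta-1)k}\leq \left(\frac{de}{2}\right)^{(\delta-1)k}$, which again reintroduces a $\delta$-dependent base. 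To finally close it, I would use that $(\delta - 1)k \leq \binom{k}{2}$ forces $k\geq 2(\delta-1)$, hence $(k/n)^{(\delta-1)k}$ can absorb the constant: choose $n$ large enough (w.h.p. statements allow $n\to\infty$) — but since the lemma is stated for all $n,k\geq 1$, the safe finish is to observe $\left(\frac{de}{2}\right)^{(\delta-1)k}(k/n)^{(\delta-1)k}$ is maximized over $n\geq k$ at $n=k$ giving $\left(\frac{de}{2}\right)^{(\delta-1)k}$, and then absorb into $\beta^k$ using $(\delta-1)k\leq k^2$, i.e. bound $\left(\frac{de}{2}\right)^{(\delta-1)k}\leq \max\{1,(de/2)^{k^2}\}$; this is too weak, so in fact the cleanest correct argument keeps the $n$ in play and uses $k/n\leq k/n$ with an additional input such as $d = O(1)$ and the fact that only $k\leq n$ matters, giving $(kde/(2\delta n))^{(\delta-1)k}\leq (de/2)^{(\delta-1)k}\cdot (k/n)^{(\delta-1)k}$ and then, crucially, $(\delta-1)k \ge k/\ln k$ together with $k/n\le 1$ makes this $\le 1$ as soon as $de/2\le 1$; for larger $d$ one splits $(de/2)^{(\delta-1)k} = (de/2)^{k}\cdot(de/2)^{(\delta-2)k}$ and, when $\delta>2$, uses $(de/2)^{(\delta-2)k}\le (k^2/n)^{??}$... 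In short, I would carry out the elementary algebra above to reduce $\Ex{s_k(G_n,\delta k)}$ to $\beta(d)^k$ for a suitable constant $\beta(d)$ (the $\delta$-dependence cancelling against the $(1-\delta)$ power of $n$ and the $(\delta-1)$ power of $2$ in $\binom{\binom k2}{\delta k}$), and then conclude by Markov:
\begin{align*}
\Pr{s_k(G_n,\delta k)\geq \beta^k} \;\leq\; \frac{\Ex{s_k(G_n,\delta k)}}{\beta^k} \;\leq\; \frac{\beta(d)^k}{\beta^k} \;\leq\; 2^{-k},
\end{align*}
upon taking $\beta := 2\,\beta(d)$. The delicate bookkeeping of where the $\delta$-factors land is the one step that needs care; everything else is a routine application of the binomial estimates from \Cref{sec:ineq} and Markov's inequality.
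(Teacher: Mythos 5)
Your overall strategy---bounding $\Ex{s_k(G_n,\delta k)}$ by $\binom{n}{k}\binom{\binom{k}{2}}{\delta k}(d/n)^{\delta k}$, simplifying with the standard binomial estimates, and finishing with Markov's inequality---is exactly the paper's, and your algebra up to $\Ex{s_k(G_n,\delta k)}\le (ne/k)^k\,(kde/(2\delta n))^{\delta k}\le (k/n)^{(\delta-1)k}\,(e^2d/\delta)^{\delta k}$ is correct. But the one step that actually needs proving---that this quantity is at most $\alpha(d)^k$ \emph{uniformly over all admissible $\delta$}---is never carried out: your write-up cycles through several attempted finishes, one of which literally contains an unresolved exponent ``$(k^2/n)^{??}$'', and ends with ``I would carry out the elementary algebra above'', which is a promise rather than a proof. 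Worse, each attempt discards the factor $1/\delta$ in the base (you replace $e^2d/(2\delta)$ by $e^2d/2$, and $kde/(2\delta n)$ by $kde/(2n)$), and that factor is precisely what saves the large-$\delta$ regime; once it is dropped, the $\delta$-dependence genuinely cannot be absorbed into a constant base, which is why all your routes dead-end.

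The missing idea is a simple dichotomy on $\delta$. If $\delta\ge e^2d$, then $(e^2d/\delta)^{\delta k}\le 1$ and $(k/n)^{(\delta-1)k}\le 1$ (since $k\le n$ and $\delta>1$), so the expectation is at most $1$: the large-$\delta$ regime you identified as ``the main obstacle'' is in fact the trivial case, killed by the $\delta$ in the denominator of $\binom{\binom{k}{2}}{\delta k}\le (k^2e/(2\delta k))^{\delta k}$. Otherwise $1<\delta<e^2d$, so $\delta$ is bounded by a constant depending only on $d$, and one checks (e.g.\ by taking $(\delta-1)k$-th roots, as the paper does) that $(k/n)^{(\delta-1)k}(e^2d/\delta)^{\delta k}\le\alpha^k$ with $\alpha=(e^2d)^{e^2d}$ for all $k\le n$. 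Markov's inequality with threshold $(2\alpha)^k$ then gives the claim with $\beta=2\alpha$. So the approach is the right one, but the proposal as written has a genuine gap at its only nontrivial step.
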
 

\begin{proof} We begin by bounding the expected number of  subgraphs appearing in $G_n\sim G\left(n,\tfrac{d}{n}\right)$. The result will then follow from Markov's inequality. Observe that
	\begin{align*}\Ex{s_k\!\left(G_n,\delta k\right)} &\leq \binom{n}{k} \binom{\binom{k}{2}}{\delta k}  \left(\frac{d}{n}\right)^{\delta k}\\
		&\leq \left(\frac{ne}{k}\right)^k  \left(\frac{k^2e}{2\delta k}\right)^{\delta k}\left(\frac{d}{n}\right)^{\delta k}\\ 
		&\leq  \left(\frac{k}{n}\right)^{(\delta -1)k} \cdot \left(\frac{e^2d}{\delta}\right)^{\delta k } . 
	\end{align*} 
	Next, we show that, for any fixed $d\geq 0$, there exists some constant $\alpha:=\alpha(d) \geq 1$ such that for all $k \in [n]$ and $1+\frac{1}{\ln k} \leqslant \delta \leqslant \frac{k-1}{2}$, the following inequality holds  
	\begin{equation}\label{eq:master}
		\left(\frac{k}{n}\right)^{(\delta -1)k}  \cdot \left(\frac{e^2d}{\delta}\right)^{\delta k }\leq \alpha^k . 
	\end{equation}
	If $\delta \geq e^2 d$ then, since $k\leq n$, the left-hand side of \eqref{eq:master} is at most $1^{(\delta-1)k}\cdot 1^{\delta k} \leq 1 $ and so we can assume that $\delta < e^2 d$ when proving \eqref{eq:master} holds. Note that we can also assume $d\neq 0$ or else the statement of the theorem holds vacuously.

	Since $\delta \geq 1 + 1/\ln k >1$, taking the $(\delta-1)k$-th root of both sides and rearranging gives 
	\begin{equation}\label{eq:master2}k \leq n \cdot \left( \alpha \cdot \left(\frac{\delta }{e^2 d} \right)^{\delta}  \right)^{\frac{1}{\delta-1}} . \end{equation} 
	Now, since $\delta < e^2 d$ and $\delta>1$, if we choose $\alpha =  \left( e^2 d  \right)^{e^2 d}$, then the right-hand side of \eqref{eq:master2} is at least $n$ and so \eqref{eq:master} holds for all $k\leq n$. 
	
	Finally, observe that by Markov's inequality and \eqref{eq:master} we have 
	\[\Pr{s_k\!\left(G_n,\delta k\right) \geq (2\alpha)^k} \leq \frac{\Ex{s_k(G_n,\delta k)} }{(2\alpha)^k} \leq \frac{\alpha^k}{(2\alpha)^k}= 2^{-k},   \] giving the result for $\beta= 2 \alpha  =  2\cdot\left( e^2 d  \right)^{e^2 d} $. \end{proof}

We now have all we need to prove the main result of this section. 

\begin{proof}[Proof of Theorem~\ref{thm:Gnmisgood}.] We will show that for any $d$ there exists some $c:=c(d)$ such that for $G_n'\sim G\!\left(n,\tfrac{d}{n}\right)$, 
	\begin{equation}\label{eq:bddforGnp} \Pr{G_n'\text{ is not monotone $(c,\Sc_d,\ln^2)$-tiny} } < \frac{20}{n},\end{equation}   
	the result then follows directly from  \Cref{lem:transfer}. 
	
	We first show that Item \eqref{itm:good1} of \Cref{def:good} holds for the function $t :\mathbb N \rightarrow \mathbb{R}$ given by $t(n)=\ln^2 n$ and the class $\mathcal{S}_d$.
	Since~$\mathcal{S}_d$ includes all graphs on at most $1000^{10(d+1)}$ vertices, we can assume that $n\geq 1000^{10(d+1)}\geq 3$ and consider any $k$-vertex subgraph of $G_n'$ with $k\leq  t(n)=\ln^2 n$.
	Observe that since $n\geq 1000^{10(d+1)}$, $ k \leq \ln^2 n$ and $6\leq \ln 1000 \leq 10$, for any $d\geq 0$ we have 
	
	\[\frac{\ln n }{3\cdot \ln k}  \geq \frac{\ln n }{3\cdot 2\ln \ln n }  \geq \frac{10(d+1)\ln 1000}{3\cdot 2\ln(10(d+1)\ln 1000)}\geq  \frac{10(d+1)}{\ln(100(d+1))} \geq \ln((d+1)e^2),  \]where the second inequality holds since $ \ln n/\ln \ln n $ is increasing on the interval $n \in (e^e, \infty)$. Thus, we satisfy the conditions of~\cref{lem:smallkGnp}. Hence, with probability at least $1 - 15/n$, all subgraphs of size $k\leq t(n)$ have at most $k-1+k/\ln k$ edges, and so are contained in $\mathcal{S}_d$.   
	
	We now show that Item \eqref{itm:good2} of \Cref{def:good} holds.
	By~\cref{rem:connectedsuffices}, we only need to establish an exponential upper bound on the number of connected $k$-vertex subgraphs of $G_n'$ for every $k > t(n)$. We proceed with a case distinction on the number of edges in such a $k$-vertex subgraph. 
	\begin{itemize}
		\item {\it Subgraphs on at most $k-1+k/\ln k$ edges.}	By \cref{lem:sparseissmall}, the number $k$-vertex graphs with at most $k-1+k/\ln k$ edges does not exceed $100^k$. This bound holds deterministically.
		\item {\it Subgraphs on at least $k+k/\ln k$ edges.}
		By \cref{lem:largekGnp} and the union bound there exists a constant $\beta = \beta(d)$ such that 
		
		\[\Pr{\sum_{j = \lceil k+k/\ln k \rceil }^{\binom{k}{2}} s_k\!\left(G_n,j\right) \geq k^2\cdot \beta^k} \leq \sum_{j = \lceil k+k/\ln k \rceil }^{\binom{k}{2}} \Pr{s_k\!\left(G_n,j\right) \geq \beta^k}\leq k^2\cdot 2^{-k}. \] Thus, again by taking the union bound, we have 
		\[
		\Pr{\bigcup_{k=t }^{n} \left\{\sum_{j = \lceil k+k/\ln k \rceil }^{\binom{k}{2}} s_k\!\left(G_n,j\right) \geq k^2\cdot \beta^k\right\}}  \leq  \sum_{k=t }^{n} k^2 \cdot 2^{-k} < n^3\cdot 2^{-t} = n^{-\omega(1)},  
		\] 
		since $t=\omega(\log n)$.
	\end{itemize}
	Thus, by combining both cases, we see that with probability $1-n^{-\omega(1)}$ the number of $k$-vertex
	connected subgraphs of $G_n'$ is at most $100^k + k^2\cdot \beta^k \leq c^k$, for some  $c:=c(d)$. Thus, due to \cref{rem:connectedsuffices},  Item \eqref{itm:good2} of \Cref{def:good} holds with probability $1-n^{-\omega(1)}$.

	Hence, by the union bound over Items \eqref{itm:good1} and \eqref{itm:good2}, there exists a~constant $c:=c(d)$ such that the probability $G_n'$ is not monotone $(c,\Sc_d, \ln^2)$-tiny is less than $\frac{15}{n} +\frac{1}{n^{\omega(1)}} < \frac{20}{n}$.  \end{proof}

 	\subsection{Lower bounds}
 	\label{sec:lower-bound}
 	
 	In this section, we prove a~lower bound on the number of induced subgraphs in a~random graph. 
 	
 	\begin{theorem}\label{thm:lowerbound}
 		Let $\delta>0$ be a~sufficiently small constant, $1\leq d \leq n/2$, and $k\in\left[n/\sqrt{d},\sqrt{\delta}n\right]$.
 		Then, w.h.p.\ $G_n \sim G(n,\tfrac{d}{n})$ satisfies $i_k(G_n)>(\delta (n/k)^{\delta/2})^k$.
 		\label{th:rg_lower}
 	\end{theorem}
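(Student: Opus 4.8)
The plan is to produce exponentially many pairwise non-isomorphic $k$-vertex induced subgraphs of $G_n \sim G(n,d/n)$ by a second-moment / concentration argument on a carefully chosen isomorphism invariant. The natural invariant for sparse random graphs is the \emph{edge count}: a $k$-vertex induced subgraph of $G(n,d/n)$ has binomially distributed edge count with mean $\binom{k}{2}\tfrac{d}{n} \approx \tfrac{k^2 d}{2n}$, and since $k \geq n/\sqrt d$ this mean is $\Omega(k\sqrt d / \sqrt{\,\cdot\,})$; more to the point, as $k$ ranges over the allowed window the achievable edge counts span an interval of length $\Theta(k^2 d/n) = \omega(k)$, which already hints at where the factor $(\delta(n/k)^{\delta/2})^k$ comes from. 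First I would fix a value $m$ in an appropriate sub-range of the typical edge-count window and, for each $k$-subset $S$, let $X_S$ be the indicator that $e(G_n[S]) = m$; I would estimate $\Pr{X_S = 1}$ from below using the local central limit theorem for the binomial (valid since the variance $\binom{k}{2}\tfrac dn(1-\tfrac dn) \to \infty$ in this regime), getting $\Pr{X_S=1} \geq n^{-o(k)}$ uniformly over a window of $\Theta(k^2 d/n)$ many values of $m$.

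The second step is to show that, w.h.p., for \emph{many} distinct values of $m$ there is at least one $k$-subset $S$ with $e(G_n[S]) = m$. I would do this via the second moment method applied to $N_m := \sum_{|S|=k} X_S$: the first moment is $\binom nk \Pr{X_S=1}$, and the key is to bound $\Ex{N_m^2}$ by controlling the correlation between $X_S$ and $X_{S'}$ over pairs with $|S \cap S'| = j$. For $j$ small the events are nearly independent; the dangerous range is $j$ close to $k$, but there the number of such pairs $\binom nk \binom kj \binom{n-k}{k-j}$ is tiny relative to $\binom nk^2$, so the sum telescopes and $\Var{N_m} = o(\Ex{N_m}^2)$, whence $N_m \geq 1$ w.h.p.\ for each fixed $m$ in the window; a union bound over the $\Theta(k^2 d/n)$ values, together with the fact that this count is at worst polynomial in $n$ while the failure probability is $n^{-\omega(1)}$ per value, gives that w.h.p.\ \emph{all} of them are realized simultaneously. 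Since distinct edge counts force non-isomorphic graphs, this already yields $i_k(G_n) \geq \Theta(k^2 d/n)$ — far too weak. To reach $(\delta(n/k)^{\delta/2})^k$ one must instead count, for a \emph{single well-chosen} $m$ in the bulk, how many isomorphism types actually occur among the $N_m$ realizing subsets: equivalently, bound the multiplicity of each isomorphism class. The cleaner route is to refine the invariant — e.g.\ use the \emph{degree sequence} of $G_n[S]$, or the number of induced subgraphs of each small fixed shape — so that a typical $k$-subset has an invariant-value lying in a set of size $(n/k)^{\Omega(k)}$, and then run the same first-moment-plus-concentration argument to show a $(1-o(1))$ fraction of those values are attained.

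Concretely, I would let the invariant be the isomorphism type itself and argue by \textbf{entropy / counting}: partition the $\binom nk$ subsets according to the isomorphism type of the induced subgraph; if $i_k(G_n) \leq I$, then some type is realized by at least $\binom nk / I$ subsets. Picking $S$ uniformly at random and exposing $G_n$, the type of $G_n[S]$ has, w.h.p., min-entropy at least $k\log(\delta (n/k)^{\delta/2})$ — this is the heart of the matter and I would establish it by exhibiting $\Omega(k)$ "independent coordinates" of the induced subgraph whose joint distribution is close to uniform on a domain of size $(n/k)^{\delta/2}$ each (for instance, $\Omega(k)$ disjoint "gadget" vertex-neighbourhood patterns, using that $G(n,d/n)$ locally looks like a Galton–Watson tree with offspring mean $d$, and $d \geq (n/k)^2 \cdot(\text{stuff})$ in this window gives enough branching). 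Combined with a sharp concentration statement (the realized-type distribution is genuinely spread out, not just high-entropy on average), this forces $I = i_k(G_n) > (\delta(n/k)^{\delta/2})^k$.

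\textbf{Main obstacle.} The routine parts are the local CLT estimate and the second-moment variance bound (both standard for $p = d/n$ with $pn \to\infty$ on the relevant subsets). The genuine difficulty is the last step: proving that the isomorphism type of a random $k$-subset's induced subgraph is not merely high-entropy \emph{on average} but spread out enough that \emph{no} single type captures more than a $((\delta(n/k)^{\delta/2}))^{-k}$ fraction of subsets — i.e.\ ruling out a large isomorphism class. I expect the author handles this by the intersection argument sketched in \cref{sec:outline} for the \emph{upper} bound on tinyness, run in reverse: show that two $k$-subsets whose induced subgraphs are isomorphic must (w.h.p.) have large pairwise intersection (because otherwise their "local profiles" would differ), hence any isomorphism class corresponds to subsets clustered around a common core, and the number of $k$-subsets within large intersection of a fixed one is $\binom{n}{o(k)}$-ish, which is $(n/k)^{o(k)}$ — beating it against the $n^{(1-o(1))k}$ total via the correct choice of the window $k \in [n/\sqrt d, \sqrt\delta n]$ yields the claimed bound. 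Getting the quantitative trade-off between "how large must the intersection be to force isomorphism" and "how many such subsets are there" to land exactly at exponent $\delta/2$ is the calculation I would be most careful about.
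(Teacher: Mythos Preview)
Your final paragraph has the right skeleton, and it matches the paper's argument: assume $i_k(G_n)$ is small, pigeonhole a large isomorphism class, show that any two of its representatives must overlap in at least $\delta k$ vertices, and then count that there are too few $k$-sets with large overlap with a fixed one. But nearly everything preceding that paragraph is a detour. The second-moment argument on edge counts, the entropy framing, the Galton--Watson gadgets and ``independent coordinates'' --- none of this is used, and none of it would get you the bound. You yourself note the edge-count route is ``far too weak'', and the entropy/gadget route is a restatement of what you need rather than a way to prove it.

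The genuine gap is in how you justify ``isomorphic $\Rightarrow$ large intersection''. You write ``because otherwise their local profiles would differ'', which is vague and points in the wrong direction. The paper's mechanism is a direct first-moment bound, and it rests on two preliminary facts you do not mention. First (Chernoff), a fixed $k$-set $U$ w.h.p.\ induces $(1\pm\delta)\mu$ edges, where $\mu=\binom{k}{2}p$. Second (Chernoff plus union over $\leq(e/\delta)^{\delta k}$ subsets), w.h.p.\ every $V\subset U$ of size $\leq\delta k$ induces at most $2\delta\mu$ edges. A Markov argument then shows at least $(1-\delta)\binom{n}{k}$ of the $k$-sets have both properties. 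Now if $|U\cap U'|\leq\delta k$ and $U$ has both properties, the intersection carries at most $2\delta\mu$ of the $\geq(1-\delta)\mu$ edges of $G_n[U]$, so any isomorphism $G_n[U]\to G_n[U']$ forces at least $(1-3\delta)\mu$ specified edges inside $U'\setminus U$; conditionally on $G_n[U]$ this has probability at most $k!\,p^{(1-3\delta)\mu}$. The union bound over all $\binom{n}{k}^2$ pairs is then at most
\[
\exp\!\left(2k\ln n-(1-3\delta)\binom{k}{2}\tfrac{d}{n}\ln\tfrac{n}{d}\right),
\]
and the assumption $k\geq n/\sqrt d$ makes $\binom{k}{2}\tfrac{d}{n}\geq(1-o(1))\tfrac{k}{2}\sqrt d$, so the negative term dominates and this is $o(1)$. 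The final counting of $k$-sets sharing $\geq\delta k$ vertices with a fixed $U$ is a straightforward binomial estimate giving the $(n/k)^{\delta/2}$ exponent; this is where $k\leq\sqrt{\delta}n$ is used. So: keep your last paragraph, discard the rest, and replace ``local profiles'' by the two edge-count lemmas and the $k!\,p^{(1-3\delta)\mu}$ first-moment bound above.
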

 	Observe that if we take $k=\big\lceil n/\sqrt{d}\big\rceil$ then w.h.p.\ $G_n\sim G(n,d/n)$  satisfies $i_k(G_n) = d^{\Omega( k)}$. Consequently, as soon as $d$ is allowed to grow with $n$ then, w.h.p.\ $G_n\sim G(n,d/n)$ is not $c$-tiny for any constant $c$. This proves one of the two limit statements in Theorem~\ref{thm:tinyamalg}   and also suggests a~barrier to applying our approach for constructing monotone tiny classes from graphs in $G(n,d/n)$ when $d$ is growing  with $n$.  
 	
\begin{proof}[Proof of Theorem~\ref{thm:lowerbound}.] Let $p:=d/n$. We may assume that $d\geq  1/\delta $ since otherwise $1/\sqrt{d} > \sqrt{\delta}$, and so the range of $k$ for which the theorem holds is empty and the statement holds vacuously. Fix a~$k$-subset $U\subset[n]$. In order to show that w.h.p.\ $i_k(G_n)>(\delta (n/k)^{\delta/2})^k$, we use the following strategy.
 		First, we prove that a~typical $k$-subset $U$ induces a~subgraph with around $k^2p/2$ edges (\cref{cl:typical_U_edges}), and every small enough subset $V\subset U$ induces a~subgraph with at most $\delta k^2p$ edges (\cref{cl:typical_U_subgraphs}).
 		Therefore, if $G_n$ has a~small number of isomorphism classes among its $k$-vertex subgraphs, it should have many pairwise isomorphic $k$-vertex subgraphs with the above two restrictions on their edges.
 		We then show that this is unlikely since most pairs of $k$-sets have small intersections (\cref{cl:overlapping_sets_U}), and w.h.p.\ two $k$-sets in $G_n$ that have a~small intersection induce non-isomorphic subgraphs as soon as edges in these subgraphs satisfy the above-mentioned restrictions (\cref{cl:isomorphic_graphs_are_close}).
 		
 		Set $\mu:={k\choose 2}p$, which is the expected number of edges in a~$k$-vertex induced graph. Note that $\mu \geq\frac{1-o(1)}{2}\cdot n$, since $k\geq n/\sqrt{d}$.
 		

 		\begin{claim}\label{clm:1}
 			W.h.p. the number of edges in $G_n[U]$ is between $(1-\delta)\mu$ and $(1+\delta)\mu$.
 			\label{cl:typical_U_edges}
 		\end{claim}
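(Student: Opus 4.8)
The plan is to prove \cref{clm:1} by a direct application of the Chernoff bound in \cref{lem:Chb}. Fix the $k$-subset $U \subset [n]$. The number of edges in $G_n[U]$, call it $\xi := |E(G_n[U])|$, is a sum of $\binom{k}{2}$ independent indicator random variables (one for each potential edge inside $U$), so $\xi$ is a binomial random variable with mean $\mu = \binom{k}{2}p$. Applying \cref{lem:Chb} with $t = \delta\mu$ gives
\[
\Pr{|\xi - \mu| \geq \delta\mu} \leq 2\exp\left[-\frac{\delta^2\mu^2}{2(\mu + \delta\mu/3)}\right] = 2\exp\left[-\frac{\delta^2\mu}{2(1+\delta/3)}\right].
\]
Since $\mu \geq \frac{1-o(1)}{2}n$ and $\delta$ is a fixed positive constant, the exponent is $-\Omega(n)$, so this probability is $2^{-\Omega(n)} = o(1)$. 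Hence w.h.p.\ $\xi$ lies in the interval $[(1-\delta)\mu, (1+\delta)\mu]$, which is exactly the claim.

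The only mild subtlety is the lower bound $\mu \geq \frac{1-o(1)}{2}n$, which was already recorded just before the claim: it follows from $k \geq n/\sqrt{d}$, giving $\mu = \binom{k}{2}p = \frac{k(k-1)}{2}\cdot\frac{d}{n} \geq \frac{k^2 d}{2n}\cdot(1-1/k) \geq \frac{n}{2}(1-o(1))$ using $k \to \infty$ (which holds since $d \leq n/2$ forces $k \geq n/\sqrt{d} \geq \sqrt{2}\cdot\sqrt{n/2}\cdot\ldots \to\infty$; more simply, $k \geq n/\sqrt{d} \geq \sqrt{n/2}\to\infty$). This ensures the exponential decay is genuine. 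There is essentially no obstacle here — the claim is a textbook concentration statement, and the work is entirely in the subsequent claims (\cref{cl:typical_U_subgraphs}, \cref{cl:overlapping_sets_U}, \cref{cl:isomorphic_graphs_are_close}), which handle the structure of small subsets and the rigidity that makes subgraphs on nearly-disjoint vertex sets non-isomorphic. For \cref{clm:1} itself, the one thing to be careful about is simply that we are proving a statement about a \emph{fixed} $U$ (not a union bound over all $\binom{n}{k}$ subsets), so the $o(1)$ failure probability suffices as stated.
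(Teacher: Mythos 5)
Your proof is correct and is essentially identical to the paper's: both apply the Chernoff bound of \cref{lem:Chb} with $t=\delta\mu$ to the binomial variable $|E(G_n[U])|$ and use $\mu\geq\frac{1-o(1)}{2}n$ (recorded just before the claim) to get an $o(1)$ failure probability. No differences worth noting.
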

 		 
 \begin{pocd}{\ref{clm:1}}
 			Note that $\mathbb{E}|E(G_n[U])|=\mu$.
 			Therefore, by the Chernoff bound (\Cref{lem:Chb}), 
 			\begin{align*}
 				\mathbb{P}\left[\Bigl||E(G_n[U])|-\mu\Bigr|>\delta \mu\right] & \leq
 				2\exp\left[-\frac{\delta^2\mu}{2(1+\delta/3)}\right] \\
 				&\leq 2\exp\left[-(1-o(1))\frac{\delta^2 n}{4(1+\delta/3)}\right]\\&=o(1), 
 			\end{align*}	as claimed.	\end{pocd}

 		\medskip
 		\begin{claim}\label{clm:2}
 			W.h.p. every set $V\subset U$ of size at most $\delta k$ induces at most $2\delta \mu$ edges.
 			\label{cl:typical_U_subgraphs}
 		\end{claim}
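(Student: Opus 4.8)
The plan is to union bound over all subsets $V\subseteq U$ with $|V|\le \delta k$, controlling each one by the Chernoff bound \Cref{lem:Chb}. First I would fix such a set $V$, say $|V|=j\le \delta k$, and observe that $|E(G_n[V])|$ is binomial with mean $\mu_j:=\binom{j}{2}p$. Since $\delta<1$ one checks directly that $\binom{\delta k}{2}\le \delta^2\binom{k}{2}$, so $\mu_j\le \delta^2\mu\le\delta\mu$; hence the target threshold $2\delta\mu$ lies above the mean by $t:=2\delta\mu-\mu_j\ge\delta\mu$, while $\mu_j+t/3\le \tfrac{5}{3}\delta\mu$. Feeding these into \Cref{lem:Chb} and using $\mu\ge\tfrac{1-o(1)}{2}n$ (which holds because $k\ge n/\sqrt d$) yields
\[
\Pr{|E(G_n[V])|>2\delta\mu}\;\le\; 2\exp\!\left(-\frac{t^2}{2(\mu_j+t/3)}\right)\;\le\; 2\exp\!\left(-\tfrac{3}{10}\delta\mu\right)\;\le\; 2\exp\!\left(-(1-o(1))\tfrac{3}{20}\delta n\right).
\]

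It then remains to absorb the union bound. The number of subsets $V\subseteq U$ with $|V|\le\delta k$ is at most $(\delta k+1)\binom{k}{\lfloor\delta k\rfloor}\le(\delta k+1)(e/\delta)^{\delta k}$, and since $k\le\sqrt{\delta}\,n$ this is at most $(\delta k+1)\exp\!\big(\delta^{3/2} n\ln(e/\delta)\big)$. Because $\delta^{1/2}\ln(e/\delta)\to 0$ as $\delta\to 0$, choosing $\delta$ small enough makes the exponent $\delta^{3/2}n\ln(e/\delta)$ negligible compared with $\tfrac{3}{20}\delta n$; this is precisely where the hypothesis that $\delta$ is sufficiently small is used. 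Consequently the probability that some $V\subseteq U$ of size at most $\delta k$ induces more than $2\delta\mu$ edges is at most $(\delta k+1)\cdot 2\exp\!\left(-(1-o(1))\tfrac{3}{20}\delta n + \delta^{3/2}n\ln(e/\delta)\right)=o(1)$, which proves the claim.

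The one genuinely delicate point is this balancing of two exponential rates: the entropy of the family of candidate sets $V$ grows like $\exp(\Theta(\delta k\log(1/\delta)))$, and it must be dominated by the single-set tail bound $\exp(-\Theta(\delta n))$; this works only thanks to the two constraints that are in force throughout the proof of \Cref{thm:lowerbound}, namely $k\le\sqrt\delta\,n$ and $\delta$ small. All the remaining ingredients — the binomial mean computation, the coefficient inequality $\binom{\delta k}{2}\le\delta^2\binom{k}{2}$, and the Chernoff estimate — are routine, so I would not expect any difficulty there.
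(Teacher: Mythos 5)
Your proposal is correct and follows essentially the same route as the paper: a Chernoff bound for each fixed $V$ (using that the mean $\binom{j}{2}p\le\delta^2\mu$ lies well below the threshold $2\delta\mu$ and that $\mu\ge(1-o(1))n/2$), followed by a union bound over the at most $k(e/\delta)^{\delta k}\le\exp\bigl(\delta^{3/2}n\ln(e/\delta)(1+o(1))\bigr)$ candidate sets, which is absorbed because $\delta$ is sufficiently small. The only difference is cosmetic: you track the constants slightly more loosely (getting $\tfrac{3}{10}\delta\mu$ in the exponent versus the paper's $\approx 3\delta\mu$), which changes nothing.
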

 		 
\begin{pocd}{\ref{clm:2}}
 			Let $j\leq \delta k$. For a~set $V\subset U$ of size $j$, the induced subgraph $G_n[V]$ has more than $2\delta \mu$ 
 			edges with probability at most
 			$$
 			2\exp\left[-(1+o(1))\frac{(2\delta\mu-\delta^2 \mu)^2}{2(\delta^2 \mu+(2\delta \mu-\delta^2 \mu)/3)}\right]=
 			2\exp\left[-\mu\frac{3\delta(2-\delta+o(1))^2}{4(1+\delta)}\right]
 			$$ 
 			by the Chernoff bound (\Cref{lem:Chb}). Since there are ${k\choose j}$ ways to choose a~set $V\subset U$ of size $j$, and we can take $\delta>0$ sufficiently small,
 			$$
 			\sum_j {k\choose j} \leq k\left(\frac{e}{\delta}\right)^{\delta k}\leq \exp\left[\delta\sqrt{\delta}\ln\left(\frac{e}{\delta}\right)n(1+o(1))\right]=o\left(e^{\delta n/100}\right),
 			$$
 		thus we get that w.h.p.\ every set $V\subset U$ of size at most $\delta k$ induces at most $2\delta\mu$ edges by the union bound.\end{pocd}
 		
 		\bigskip 
 		
 		Let $\xi$ count the number of sets $U\subset[n]$ of size $k$ such that
 		\begin{itemize}
 			\item[($i$)] every set $V\subset U$ of size at most $\delta k$ induces at most $2\delta\mu$ edges in $G_n$,
 			\item[($ii$)] the number of edges in $G_n[U]$ belongs to $\left(\left(1-\delta\right)\mu,\left(1+\delta\right)\mu\right)$.
 		\end{itemize}
 		By~\cref{cl:typical_U_edges,cl:typical_U_subgraphs}, we get that $\mathbb{E} \xi=(1-o(1)){n\choose k}$. Hence, $\mathbb{E}({n\choose k}-\xi)=o({n\choose k})$.
 		By Markov's inequality, 
 		$$
 		\mathbb{P}\left[{n\choose k}-\xi>\delta{n\choose k}\right] \leq  \frac{o({n\choose k})}{\delta{n\choose k}} = o(1).
 		$$
 		Therefore, w.h.p.\ the following event $\mathcal{C}$ holds: $\xi\geq(1-\delta){n\choose k}$. 
 		
 		Let $\mathcal{B}$ be the event that $i_k(G_n)\leq (\delta (n/k)^{\delta/2})^k$. Clearly, $\mathcal{B}\cap\mathcal{C}$ implies the existence of a~$k$-vertex set $U\subset[n]$ satisfying ($i$) and ($ii$) such that $G_n$ has at least $(1-\delta){n\choose k}(\delta (n/k)^{\delta/2})^{-k}$ induced subgraphs isomorphic to $G_n[U]$.
 		We shall use the following two claims to conclude that the latter event is unlikely.

 		\begin{claim}\label{clm:3}
 			W.h.p. in $G_n$ there are no two $k$-vertex sets $U,U'$ sharing at most $\delta k$ vertices such that properties ($i$) and ($ii$) hold for $U$ and $G_n[U]\cong G_n[U']$.
 			\label{cl:isomorphic_graphs_are_close}
 		\end{claim}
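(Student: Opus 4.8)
The plan is a first-moment (union bound) estimate over triples $(U,U',\phi)$ where $U,U'\subseteq[n]$ are $k$-sets with $|U\cap U'|\le\delta k$ and $\phi\colon U\to U'$ is a bijection; call such a triple \emph{relevant}. Since $G_n[U]\cong G_n[U']$ forces some bijection $U\to U'$ to be a graph isomorphism, it suffices to prove
\[
\sum_{(U,U',\phi)\text{ relevant}}\mathbb{P}\bigl[\,U\text{ satisfies }(i)\text{ and }(ii)\ \text{and}\ \phi\text{ is an isomorphism }G_n[U]\to G_n[U']\,\bigr]=o(1).
\]

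To bound one term I would reveal $G_n[U]$, the edges of $G_n$ lying inside $U$; note that whether $U$ satisfies $(i)$ and $(ii)$ is already decided by this. Write $W:=U\cap U'$ and $B:=\phi^{-1}(U'\setminus U)\subseteq U$, so $\phi^{-1}(W)=U\setminus B$ has size $|W|\le\delta k$. The key observation: for a pair $\{x,y\}\subseteq U$ meeting $B$ the pair $\{\phi(x),\phi(y)\}$ meets $U'\setminus U$, hence its edge-indicator is independent of $G_n[U]$; there are exactly $\binom{k}{2}-\binom{|W|}{2}$ such \emph{fresh} pairs, their indicators are mutually independent, and the isomorphism requirement $\{\phi(x),\phi(y)\}\in E(G_n)\Leftrightarrow\{x,y\}\in E(G_n)$ holds for a fresh pair with probability $p$ or $1-p$ according to whether $\{x,y\}\in E(G_n[U])$. (The remaining $\binom{|W|}{2}$ pairs lie inside $\phi^{-1}(W)$, are fixed by $G_n[U]$, and only give a factor in $\{0,1\}$.) Thus, with $E_U:=|E(G_n[U])|$ and $e_W:=|E(G_n[\phi^{-1}(W)])|$,
\[
\mathbb{P}\bigl[\phi\text{ is an isomorphism}\mid G_n[U]\bigr]\ \le\ p^{\,E_U-e_W}(1-p)^{\binom{k}{2}-\binom{|W|}{2}-(E_U-e_W)}\ \le\ p^{\,E_U-e_W}.
\]
On the event that $U$ satisfies $(i)$ and $(ii)$ we get $E_U>(1-\delta)\mu$ from $(ii)$ and $e_W\le 2\delta\mu$ from $(i)$ (applied to $\phi^{-1}(W)$, a set of at most $\delta k$ vertices of $U$); since $0<p<1$, the conditional probability is then at most $p^{(1-3\delta)\mu}$, so each term of the sum is at most $p^{(1-3\delta)\mu}=\exp\!\bigl(-(1-3\delta)\mu\ln(n/d)\bigr)$.

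Finally I would bound the number of relevant triples by $\binom{n}{k}^{2}k!\le\exp\!\bigl(k\ln(n^{2}/k)+O(k)\bigr)$ (Vandermonde bounds the choices of $U'$ by $\binom{n}{k}$, and $\binom{n}{k}\le(en/k)^{k}$, $k!\le k^{k}$), so the whole sum is at most $\exp\!\bigl(k\ln(n^{2}/k)+O(k)-(1-3\delta)\mu\ln(n/d)\bigr)$. Using $\mu\ge\tfrac14 k^{2}d/n$ together with $n/\sqrt d\le k\le\sqrt\delta n$ (so $kd\ge n\sqrt d$ and $n^{2}/k\le n\sqrt d$) and $\ln(n/d)\ge\ln 2$ (from $d\le n/2$), the claim that the negative term wins reduces to the inequality $\sqrt d\,\ln(n/d)\gg\ln n$ uniformly over $1/\delta\le d\le n/2$, which holds once $\delta$ is a small enough absolute constant; the tight regime is $d$ near its lower bound $1/\delta$ (forcing $k=\Theta(n)$), where $\sqrt d\ln(n/d)=\Theta(\ln n/\sqrt\delta)$ already exceeds $\ln n$ by the large factor $\Theta(1/\sqrt\delta)$. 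Combined with $k\to\infty$ this makes the exponent tend to $-\infty$, so the sum is $o(1)$, and I expect this last computation — keeping the exponent of $p$ essentially equal to $\mu$, for which $(ii)$ is used to lower-bound $E_U$ and $(i)$ to upper-bound $e_W$, and checking $\mu\ln(n/d)$ beats the entropy $\approx k\ln(n^{2}/k)$ of the choices across all $d$ — to be the only genuine work; the probabilistic core (condition on $G_n[U]$, read off the fresh constraints) is short.
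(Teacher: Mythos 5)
Your proposal is correct and follows essentially the same route as the paper: a union bound over the at most $\binom{n}{k}^2 k!$ triples $(U,U',\phi)$, with property (ii) lower-bounding the edges of $G_n[U]$, property (i) discarding the at most $2\delta\mu$ edges inside the (preimage of the) intersection, yielding a per-triple probability $p^{(1-3\delta)\mu}$ that is beaten by the entropy term because $k\geq n/\sqrt d$ forces $\mu\ln(n/d)=\Omega(k\sqrt d\ln(n/d))\gg k\ln n$. Your explicit conditioning on $G_n[U]$ and use of $\phi^{-1}(U\cap U')$ rather than $U\cap U'$ is in fact a slightly more careful rendering of the same independence argument the paper states informally.
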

 		
\begin{pocd}{\cref{clm:3}}
 			Fix a~$k$-set $U$ and assume that it has properties ($i$) and ($ii$).
 			Fix another set $U'$ that shares at most $\delta k$ vertices with $U$.
 			Due to properties ($i$) and ($ii$), the probability that $G_n[U]\cong G_n[U']$ is at most $k! \cdot p^{(1-3\delta)\mu}$.
 			Indeed, by property ($ii$), the subset $U \cap U' \subset U$ of size at most $\delta k$ induces at most $2\delta\mu$ edges.
 			As, by property ($i$), $G_n[U]$ has at least $(1-\delta)\mu$ edges, there are at least $(1-3\delta)\mu$ edges that should be found at the right place in $G_n[U' \setminus U]$ after committing to one of the $(k-\delta k)! \leqslant k!$ orderings of $U' \setminus U$.
 			
 			By the union bound, the probability that these sets exist is at most
 			\begin{align*}
 				{n\choose k}^2 k! p^{(1-3\delta)\mu} &\leq
 				\exp\left[2k\ln n-(1-3\delta){k\choose 2}\frac{d}{n}\ln\frac{n}{d}\right]\\
 				&\leq\exp\left[k\left(2\ln n-(1-4\delta)\frac{k}{2}\frac{d}{n}\ln\frac{n}{d}\right)\right]\\
 				&\leq\exp\left[k\left(2\ln n-\frac{1-4\delta}{2}\sqrt{d}\ln\frac{n}{d}\right)\right]\\
 				&=o(1),
 			\end{align*} as claimed.
 		\end{pocd} 
 		
 		\medskip
 		\begin{claim}\label{clm:4}
 			Let $U \subset V(G_n)$ be a~set of size $k$.
 			The number of sets $U' \subset V(G_n)$ of size $k$ sharing at least $\delta k$ vertices with $U$ is less than $\frac{1}{2}{n\choose k}(\delta (n/k)^{\delta/2})^{-k}$.
 			\label{cl:overlapping_sets_U}
 		\end{claim}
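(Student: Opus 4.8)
The plan is to prove this as a pure counting estimate; no randomness enters. Write $m := \lceil \delta k \rceil$, so that $m \ge \delta k$, and let $N$ be the number of $k$-sets $U' \subseteq V(G_n)$ with $|U \cap U'| \ge \delta k$ (equivalently, with $|U \cap U'| \ge m$). First I would bound $N$ by a single binomial product by double counting: the number of pairs $(S, U')$ with $S \subseteq U$, $|S| = m$, $S \subseteq U'$, and $|U'| = k$ equals $\binom{k}{m}\binom{n-m}{k-m}$, and it also equals $\sum_{|U'| = k} \binom{|U \cap U'|}{m}$; since $\binom{|U \cap U'|}{m} \ge 1$ precisely when $|U \cap U'| \ge m$, this gives $N \le \binom{k}{m}\binom{n-m}{k-m}$.

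Next I would estimate the two factors. Crudely, $\binom{k}{m} \le 2^k$. For the second, using that $k \le \sqrt{\delta}\, n \le n/2$ when $\delta$ is a small enough constant (so $n - k \ge n/2$),
\[
\binom{n-m}{k-m} \le \binom{n}{k-m} = \binom{n}{k}\prod_{i=1}^{m} \frac{k-m+i}{n-k+i} \le \binom{n}{k}\Bigl(\frac{k}{n-k}\Bigr)^{\!m} \le \binom{n}{k}\Bigl(\frac{2k}{n}\Bigr)^{\!m}.
\]
Since $2k/n < 1$ and $m \ge \delta k$, this yields
\[
N \le 2^k \binom{n}{k}\Bigl(\frac{2k}{n}\Bigr)^{\!\delta k} = 2^{(1+\delta)k}\binom{n}{k}\Bigl(\frac{k}{n}\Bigr)^{\!\delta k}.
\]

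Finally I would check $2^{(1+\delta)k}\binom{n}{k}(k/n)^{\delta k} < \tfrac12 \binom{n}{k}\delta^{-k}(k/n)^{\delta k/2}$, which is exactly the claimed bound since $\bigl(\delta (n/k)^{\delta/2}\bigr)^{-k} = \delta^{-k}(k/n)^{\delta k/2}$. Dividing through by $\binom{n}{k}(k/n)^{\delta k/2}$, the inequality becomes $2^{(1+\delta)k}(k/n)^{\delta k/2} < \tfrac12 \delta^{-k}$; bounding $(k/n)^{\delta k/2} \le (\sqrt{\delta})^{\delta k/2} = \delta^{\delta k/4}$ (again via $k/n \le \sqrt{\delta}$), it suffices that $2^{(1+\delta)k + 1} < \delta^{-(1+\delta/4)k}$, i.e.\ $(1+\delta) + \tfrac1k < (1 + \delta/4)\log(1/\delta)$. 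Since $d \le n/2$ forces $k \ge n/\sqrt{d} \ge \sqrt{2n} \to \infty$, the $1/k$ term is negligible, and for $\delta$ a sufficiently small constant the right-hand side exceeds the left-hand side for all large $n$; this finishes the claim.

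I do not expect a genuine obstacle, as the whole argument is elementary counting. The one point that needs care is to keep the factor $(k/n)^{\delta k}$ in the bound on $N$ and exploit it only at the end via $k/n \le \sqrt{\delta}$: the comparison works because $(k/n)^{\delta k}$ sits against $(k/n)^{\delta k/2}$ in the target with the larger exponent on the left, so their ratio $(k/n)^{\delta k/2} < 1$ is in our favour. Replacing $(k/n)^{\delta k}$ by $1$ too early would lose this, and since $n/k$ may be as large as $\sqrt{d} = \sqrt{n/2}$ the comparison would then fail once $d$ is a large enough constant.
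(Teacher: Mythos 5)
Your proof is correct and arrives, by essentially the same route, at the paper's key intermediate bound $N \leq 2^k\binom{n}{k-\lceil\delta k\rceil}$, followed by a comparison of $\binom{n}{k}$ to $\binom{n}{k-\lceil\delta k\rceil}$. Two of your intermediate steps are cleaner than the paper's: you obtain the bound on $N$ by double-counting pairs $(S,U')$ with $S$ an $m$-subset of $U\cap U'$, rather than writing the exact sum $\sum_{j\geq \lceil\delta k\rceil}\binom{k}{j}\binom{n-k}{k-j}$ and then pulling out a maximum; and you compare the two binomials with the elementary telescoping product $\binom{n}{k-m}/\binom{n}{k}=\prod_{i=1}^{m}(k-m+i)/(n-k+i)\leq (2k/n)^m$, whereas the paper invokes Stirling's approximation to get an $\exp[\delta k(\ln(n/k)+\ln(1-\sqrt{\delta})+o(1))]$ lower bound on the ratio. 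Your version trades a slightly weaker constant ($-\ln 2$ instead of $\ln(1-\sqrt{\delta})$ in the exponent) for a fully explicit, asymptotics-free calculation, and the final comparison still closes with plenty of room once $\delta$ is small and $k\geq n/\sqrt{d}\geq\sqrt{2n}\to\infty$; you correctly note the pitfall of discarding the factor $(k/n)^{\delta k}$ too early, since $n/k$ can be as large as $\sqrt{d}$.
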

 		
\begin{pocd}{\ref{clm:4}}
 			Let $k'=\left\lceil\delta k\right\rceil$. Let us fix $U$ and compute the number of sets $U'$:
 			$$
 			\sum_{j\geq k'}{k\choose j}{n-k\choose k-j}\leq\left[\max_{j\geq k'}{n-k\choose k-j}\right]\sum_{j\geq k'}{k\choose j}\leq {n\choose k-k'} 2^k.
 			$$
 			It remains to prove that 
 			\begin{equation}
 				\frac{{n\choose k}}{{n\choose k-k'}}\geq 2\left(2\delta (n/k)^{\delta/2}\right)^k=\exp\left(k\left(\frac{\delta}{2} \ln\frac{n}{k}+\ln(2\delta)\right)+\ln 2\right).
 				\label{eq:binomial_fraction}
 			\end{equation}
 			By applying Stirling's approximation, we obtain
 			
 			\begin{align*}
 				\frac{{n\choose k}}{{n\choose k-k'}}&=
 				\frac{(k-k')!(n-k+k')!}{k!(n-k)!}\\
 				&\sim\frac{\sqrt{2\pi(k-k')}\cdot \sqrt{2\pi(n-k+k')}}{\sqrt{2\pi k }\cdot \sqrt{2\pi(n-k)}}\times\frac{(k-k')^{k-k'}(n-k+k')^{n-k+k'}}{k^k(n-k)^{n-k}}\\
 				&\geq\sqrt{1-\delta}\frac{(n-k+k')^{k'}}{k^{k'}}\left(1+\frac{k'}{k-k'}\right)^{-(k-k')}\left(1+\frac{k'}{n-k}\right)^{n-k}\\
 				&=\frac{(n-k+k')^{k'}}{k^{k'}}\exp\left(-k'+O\left(\frac{k'^2}{k-k'}\right)+k'-O\left(\frac{k'^2}{n-k}\right)\right)\\
 				&=\left(\frac{n-k+k'}{k}\right)^{k'}e^{o(k')}\\
 				&\geq\exp\left[k'\left(\ln\frac{n-k}{k}+o(1)\right)\right]\\
 				&\geq\exp\left[\delta k\left(\ln\frac{n(1-\sqrt{\delta})}{k}+o(1)\right)\right]\\
 				&=
 				\exp\left[\delta k\left(\ln\frac{n}{k}+\ln\left(1-\sqrt{\delta}\right)+o(1)\right)\right],
 			\end{align*}
 			implying \eqref{eq:binomial_fraction} and concluding the proof of the claim.
 		\end{pocd}
 		
 		\bigskip 
 		
We now have what we need to complete the proof of Theorem~\ref{thm:lowerbound}. Let us assume that $\mathbb{P}(\mathcal{B}\cap\mathcal{C})>\delta'$, where $\delta'>0$ is bounded away from 0.
 		From~\cref{cl:isomorphic_graphs_are_close} it follows that w.h.p.\ there are no sets 
 		$U,U'$ satisfying properties ($i$) and ($ii$) sharing at most $\delta k$ vertices and inducing isomorphic subgraphs in $G_n$.
 		But then with probability at least $\delta'-o(1)$ there exists $U$ of size~$k$ satisfying properties ($i$) and ($ii$) and at least $(1-\delta){n\choose k}(\delta (n/k)^{\delta/2})^{-k}-1$ other sets $U'$ having at least $\delta k$ 
 		common vertices with $U$ and inducing subgraphs isomorphic to $G_n[U]$.
 		But the total number of sets $U'$ that have so large intersections with $U$ is smaller by Claim~\ref{cl:overlapping_sets_U}, a~contradiction.\end{proof}
 	
 	\subsection{Threshold for exponentially many unlabeled induced subgraphs}\label{sec:threshold}
	
	For a~graph $G$, we denote by $t(G)$ the size of the largest \emph{homogeneous} (i.e., complete or edgeless) induced subgraph of $G$. Recall that $s(G)$ and $i(G)$ denote the number of unlabeled subgraphs and the number of unlabeled \emph{induced} subgraphs of $G$, respectively.

	A graph $G$ is called {\it $c$-Ramsey}, if $t(G) < \lceil c\log n\rceil$. Note that $c$-Ramsey graphs have order-smallest possible homogeneous induced subgraphs due to the well-known theorem of Ramsey: $t(G)\geq\frac{1}{2}\log n$ for every $G$ (see, e.g.,~\cite{Ramsey}). 
	The conjecture of Erd\H{o}s and R\'{e}nyi, that was resolved by Shelah~\cite{S1998}, says that such graphs have essentially the largest number of unlabeled induced subgraphs possible.
	More precisely, it states that if a~graph $G$ is {\it $c$-Ramsey}, then there exists $\varepsilon>0$ such that $i(G)\geq e^{\varepsilon n}$. This conjecture sparked the study of the relation between the two parameters $t(G)$ and $i(G)$.
	For example, for graphs $G$ with $t(G) < (1-\varepsilon)n$, it is known that $i(G)=\Omega(n^2)$~\cite{AB1989}. Further, in~\cite{EH1989}, it is shown that $$i(G)\geq n^{\Omega\left(\sqrt{n/t(G)}\right)}.$$ This result is far from being tight for graphs with small $t(G)$ as follows from the resolved conjecture of Erd\H{o}s and R\'{e}nyi. In~\cite{AH1991}, it is shown that $i(G)\geq 2^{n/(2t^{20\log(2t)})}$, where $t = t(G)$, which gives a~much better bound for $G$ with $t(G)$ close to $\log n$. 
	However, in general, for non-Ramsey graphs, i.e., graphs $G$ with $t(G) = \omega(\log n)$, tight lower bounds on $i(G)$ are not known.
	
	For a~constant $p$, a~random graph $G_n\sim G(n,p)$ is $2/\log \left(\min\{p,1-p\}^{-1}\right)$-Ramsey w.h.p.\ \cite[Theorem 7.1]{Janson}, and thus, Shelah's result implies an exponential lower bound on $i(G_n)$. For $1/n \leqslant p \leqslant 1/2$, w.h.p.\ $t(G_n)=\Theta (\log(2np)/p)$ (see~\cite[Theorems~7.1,~7.4]{Janson}). In particular, when $p=o(1)$, w.h.p.\ $G_n$ is not $c$-Ramsey for any constant $c>0$. Nevertheless, Theorem~\ref{th:rg_lower} implies that for a~large enough constant $C$ and $C/n \leqslant p \leqslant 1/2$, w.h.p.\ $G_n$ still has exponentially many unlabeled induced subgraphs. Indeed, by taking $k=\lfloor\delta^{4/\delta} n\rfloor$, we deduce from Theorem~\ref{th:rg_lower} that w.h.p.\ $i(G_n) > i_k(G_n)>(1/\delta)^k>e^{\varepsilon n}$ for an appropriate choice of $\varepsilon>0$. Note that, due to symmetry reasons, the upper bound on $p$ does not cause any loss in generality---this bound on $i(G_n)$ can be immediately extended to every $C/n \leqslant p \leqslant 1-C/n$.
	This result is fairly tight as stated below.

\begin{customthm}{1.4}
	\threshold
\end{customthm}

	\begin{proof}  
		Item $(\ref{itm:2})$ (for $p \leq 1/2$) follows directly from Theorem~\ref{thm:lowerbound} by taking $k=\lfloor \delta^{4/\delta} n\rfloor$, however we must prove Item~$(\ref{itm:1})$. Thus, we only need to prove that w.h.p.\ $s(G_n)=2^{o(n)}$ whenever $p<\frac{1-\delta}{n}$. 
		To this end we will use the fact that, within this regime, w.h.p.\ $G_n$ is a~union of connected components of size at most $A \log n$, for some constant $A>0$, and all components contain at most one cycle (see, e.g.,~\cite[Theorems~5.4,~5.5]{Janson}).
		First, note that by Markov's inequality, w.h.p.\ 
		the number of vertices that belong to components of size at least $\sqrt{\log\log n}$ is at most $f_n$ for a~certain $f_n=o(n)$. Indeed,
		letting $X_k$ be the number of vertices that belong to components of size $k$, we get 
		\begin{align*}
			\sum_{k=\sqrt{\log\log n}}^{A\log n}\mathbb{E}[X_k] & \leq 
			\sum_{k=\sqrt{\log\log n}}^{A\log n} k{n\choose k} k^{k-2} p^{k-1}(1-p)^{k(n-k)}\\
			&\leq (1+o(1))\sum_{k=\sqrt{\log\log n}}^{A\log n} (enp(1-p)^n)^k/p \\
			&\leq
			(1+o(1))\sum_{k=\sqrt{\log\log n}}^{\infty} \left(npe^{1-np}\right)^k/p\\
			&=\frac{1+o(1)}{1-npe^{1-np}}\cdot \frac{\left(npe^{1-np}\right)^{\sqrt{\log\log n}}}{np}n\\
			&\leq\frac{e+o(1)}{1-(1-\delta) e^{\delta}} \left(npe^{1-np}\right)^{\sqrt{\log\log n}-1}n\\
			&=o(n),
		\end{align*}
		since $npe^{1-np}$ increases with $p$ in the range, and thus it is at most 
		$(1-\delta) e^{\delta}$.
		
		Now, let $S \subset V(G_n)$ be the set of vertices in the union of all components of size at least $\sqrt{\log\log n}$. We shall bound the number of unlabeled subgraphs of $G_n[S]$ and the number of unlabeled subgraphs of $G_n$ that are entirely outside $S$, and then the total number unlabeled subgraphs in the entire graph is at most the product of these two bounds, since there are no edges between $S$ and $V(G_n)\setminus S$ in $G_n$.
		
		\begin{itemize}
			\item {\it Subgraphs of $G_n[S]$.} There are at most $2^{f_n}$ ways to choose a~subset $U$ from $S$ and there are $2^{o(|U|)}=2^{o(f_n)}$ partitions of $|U|$. For a~fixed $U\subset S$ and its fixed partition,  there are at most $100^{|U|}\leq 100^{f_n}$ unlabeled graphs with connected components whose sizes correspond to a~fixed partition of $|U|$ and with at most one cycle in each component, due to Lemma~\ref{lem:sparseissmall}.
			
			\item {\it Subgraphs of $G_n\setminus S$.} Connected subgraphs of $G_n[V(G_n) \setminus S]$ fall into at most 
			$$
			2^{{\lfloor\sqrt{\log\log n}\rfloor\choose 2}}<2^{\log\log n}=\log n
			$$
			isomorphism classes. Any subgraph of $G_n[V(G_n) \setminus S]$ is a~disjoint union of at most $n$ connected graphs from these isomorphism classes. Thus, the number of unlabeled graphs that are entirely outside $S$ is at most
			$$
			n{n+\lceil\log n\rceil\choose\lceil\log n\rceil}\leq 2^{\log^2 n}.
			$$ 
		\end{itemize}
		Eventually we get that w.h.p.\ 
		$
		s(G_n)\leq 2^{(1+o(1))f_n}\cdot 100^{f_n}\cdot 2^{\log^2 n}=2^{o(n)}.
		$
	\end{proof}

 	\section{Monotone tiny classes can be unwieldy}
 	\label{sec:monotone-tiny}
 
 The aim of this section is to construct a~family of classes that are tiny (thus also \emph{small}) and monotone, but are \textit{unwieldy} in the sense that they cannot be encoded by uniformly bounded labeling schemes (Theorem~\ref{thm:bdddeglabelings}) or described by a~countable collection of $c$-factorial (in particular, small) classes (Theorem~\ref{th:class-complexity}). 
 
 \subsection{Construction of monotone tiny classes}
 \label{sec:general-framework}

 The following lemma is key to our construction of monotone tiny classes with no universal graph of a~given polynomial size. 
 Let $L \subseteq \Nn$ be an infinite subset of the natural numbers and $t:\mathbb{N}\rightarrow \mathbb{R} $ be an increasing function satisfying $t(n) \leq n $ for all $n\in \Nn$.
 We say that $L$ is \emph{$t$-sparse} if there is no pair $x \neq y \in L$ such that $t(y)\leq x \leq y$.   
 
 \begin{lemma}\label{lem:good-tiny} 
 	Let $L\subseteq \mathbb{N}$ be a~$t$-sparse set and let $\gamma$ be a~constant.
 	For every $\ell \in L$, let $\Mc_\ell$ be a~set of $\ell$-vertex monotone $(c,\Yc,t)$-tiny graphs
 	with $|\Mc_\ell| \leq \gamma^{t(\ell)}$. Then $\Zc := \mon(\cup_{\ell \in L} \Mc_\ell)$ is a~monotone tiny class.
 \end{lemma}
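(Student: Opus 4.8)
The plan is to fix an arbitrary $k \in \Nn$ and bound the number of unlabeled $k$-vertex graphs in $\Zc = \mon(\cup_{\ell \in L}\Mc_\ell)$ by $c'^k$ for some constant $c'$ depending only on $c$, $\gamma$, and the fixed data of the construction. Every graph in $\Zc$ is (isomorphic to) a subgraph of some graph $G \in \Mc_\ell$ for some $\ell \in L$; so a $k$-vertex graph in $\Zc$ arises as a $k$-vertex subgraph of some $G \in \Mc_\ell$. The key is to split the analysis according to how $\ell$ compares to $k$, using $t$-sparseness of $L$ to control the "dangerous" middle range.

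First I would handle the two easy extremes. If $\ell \leq k$, then $G$ has at most $k$ vertices, so it contributes subgraphs to level $k$ only if $\ell \le k$, and the total number of subgraphs of all graphs in $\cup_{\ell \le k, \ell \in L}\Mc_\ell$ on exactly $k$ vertices is bounded: each such $\Mc_\ell$ has $\le \gamma^{t(\ell)} \le \gamma^{t(k)} \le \gamma^k$ graphs, each $\ell$-vertex graph has $\le 2^{\binom{\ell}{2}}$ subgraphs — but more carefully, since $\ell \le k$, there are at most $k$ relevant values of $\ell$ and the subgraphs on $k$ vertices come only from $\ell = k$ (a subgraph of a graph on $<k$ vertices cannot have $k$ vertices), so this contributes at most $\gamma^{t(k)} \le \gamma^k$ graphs. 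At the other extreme, if $\ell$ is so large that $k \le t(\ell)$, then by item (1) of $(c,\Yc,t)$-tinyness every $k$-vertex subgraph of $G \in \Mc_\ell$ lies in the fixed tiny class $\Yc$, so all such contributions together lie in $\Yc$ and number at most $c_\Yc^k$ where $c_\Yc$ is the tinyness constant of $\Yc$ — crucially this bound is uniform over all such $\ell$.

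The main obstacle — and the only place $t$-sparseness is used — is the intermediate range $k < t(\ell)$ is false but $\ell > k$, i.e. $t(\ell) \le k < \ell$. Here I claim $t$-sparseness forces at most one such $\ell$: if $\ell_1 < \ell_2$ both satisfied $t(\ell_i) \le k < \ell_i$, then taking $x = \ell_1$ wait — I need $x \le y$ with $t(y) \le x$; set $y = \ell_2$, and note $t(\ell_2) \le k < \ell_1 = x < \ell_2 = y$, contradicting $t$-sparseness (since $x \ne y$, $t(y) \le x \le y$). Wait, one must check $x \le y$: indeed $\ell_1 < \ell_2$. So there is at most one $\ell^\star \in L$ in this range, and for that single $\ell^\star$, item (2) of monotone $(c,\Yc,t)$-tinyness (applicable since $k > t(\ell^\star)$, $k \le \ell^\star$) gives $s_k(G) \le c^k$ for each of the $\le \gamma^{t(\ell^\star)} \le \gamma^k$ graphs $G \in \Mc_{\ell^\star}$; so this range contributes at most $\gamma^k c^k$ unlabeled $k$-vertex graphs.

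Finally, combining the three ranges, the number of unlabeled $k$-vertex graphs in $\Zc$ is at most $\gamma^k + c_\Yc^k + \gamma^k c^k \le (\gamma c + c_\Yc + \gamma + 1)^k$, which is $c'^k$ for the constant $c' := \gamma c + c_\Yc + \gamma + 1$. Since $\Zc$ is monotone (a monotone closure is monotone by definition) and monotone classes are hereditary, and the growth bound $c'^k$ holds for every $k$, $\Zc$ is tiny by \cref{def:tiny}; being also monotone, it is a monotone tiny class, as required. The only subtlety I would be careful to spell out is that a subgraph (not induced subgraph) of a $k$-vertex graph can be on any number of vertices up to $k$, so when I say "contributes to level $k$ only if $\ell = k$" in the first range I should note a $k$-vertex subgraph needs $\ge k$ vertices in the host; and that Lemma statements about $s_k(G)$ count unlabeled subgraphs, which is exactly what we need since we are counting isomorphism classes in $\Zc$.
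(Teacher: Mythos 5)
Your argument is essentially the paper's: fix a level $k$, use $t$-sparseness to show that at most one $\ell \in L$ can satisfy $t(\ell) \le k \le \ell$, note that all larger $\ell$ contribute only members of the fixed tiny class $\Yc$ (via item (1) of the definition), and bound the contribution of the one dangerous level by $\gamma^k c^k$ via item (2). The structure and the use of every hypothesis match the paper, which merely organizes the case split per graph $G \in \Zc$ rather than per source level $\ell$.

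The one slip is in your first range. For $\ell = k$ you claim the contribution is at most $\gamma^{t(k)} \le \gamma^k$ graphs, but a graph $G \in \Mc_k$ contributes not just itself: it contributes all of its unlabeled $k$-vertex subgraphs, i.e.\ every spanning subgraph obtained by edge deletions, of which there are up to $s_k(G)$ isomorphism classes. The correct bound for this range is therefore $\gamma^k \cdot c^k$ when $t(k) < k$ (item (2) applies since $t(k) \le k$ always), and when $t(k) = k$ these subgraphs all lie in $\Yc$ and are absorbed by your second range. This is harmless --- the corrected total $2\gamma^k c^k + c_\Yc^k$ is still at most $c'^k$ for a suitable constant --- and in fact $t$-sparseness already guarantees that your first and third ranges cannot both be nonempty for the same $k$: each asserts the existence of some $\ell \in L$ with $t(\ell) \le k \le \ell$, and such an $\ell$ is unique. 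The paper treats them as a single case for exactly this reason.
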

 
 \begin{proof}
 	Let $\beta$ be such that $\Yc$ has at most $\beta^n$ unlabeled $n$-vertex graphs for every $n \in \Nn$.
 	Let $(\ell_i)_{i\in \Nn}$ be the natural ordering on the elements of $L$ (by increasing value). 
 	
 	Let $n$ be an arbitrary fixed natural number and $i$ be the smallest index such that $n \leq \ell_i$.
 	Consider an arbitrary $n$-vertex graph $G \in \Zc$. By construction, $G$ is a subgraph of some $G' \in \Mc_{\ell_k}$ with $k \geq i$.
 	If $k > i$, then from $t$-sparseness we have $n \leq \ell_i \leq t(\ell_k) \leq \ell_k$,
 	and since $G' \in \Mc_{\ell_k}$ is monotone $(c,\Yc,t)$-tiny, we conclude that $G$ belongs to $\Yc$.
 	For the same reason, if $k=i$ and $n \leq t(\ell_i)$, then $G$ is in $\Yc$.
 	Thus, if $G$ is not in $\Yc$, then $k=i$ and $t(\ell_i) \leq n \leq \ell_i$, in which case $G$ is one of at most $c^n$ $n$-vertex subgraphs of $G' \in \Mc_{\ell_i}$. Consequently, there are at most $|\Mc_{\ell_i}|\cdot c^n \leq \gamma^{t(\ell_i)} \cdot c^n \leq \gamma^n \cdot c^n$ graphs on $n$ vertices in $\Zc$ that are not in $\Yc$.
 	%
 	
 	Combining these observations, we conclude that the number of unlabeled $n$-vertex graphs in $\Zc$ is bounded from above by
 	\[
 	\beta^n + \gamma^n \cdot c^n \leq \alpha^n,
 	\] 
 	for some constant $\alpha$.   
 \end{proof}

	\subsection{Lower bound on labeling schemes}\label{subsec:lbls}
	
	Let $d \in \Nn$, and $c := c(d)$ be the constant given by Theorem~\ref{thm:Gnmisgood}, $\Sc_d$ be the monotone tiny class given by \Cref{def:classY}, and $t(n) := \ln^2 n$. We denote by $\Xc_{d}$ the class of monotone $(c,\Sc_d,t)$-tiny unlabeled graphs $G$ with $\left\lceil d \cdot(|V(G)|-1)/2\right\rceil $ edges.
	
	\begin{lemma}\label{lem:many-good-graphs}
		For any $d$, the number of unlabeled $n$-vertex graphs in $\Xc_{d}$ is at least $n^{\frac{(d-2)n}{2}(1-o(1))}$.
	\end{lemma}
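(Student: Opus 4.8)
The goal is a lower bound of $n^{\frac{(d-2)n}{2}(1-o(1))}$ on the number of unlabeled $n$-vertex graphs that are monotone $(c,\Sc_d,\ln^2)$-tiny and have exactly $\lceil d(n-1)/2\rceil$ edges. The natural route is the probabilistic method: by Theorem~\ref{thm:Gnmisgood}, a random graph $G_n\sim G(n,\lceil d(n-1)/2\rceil)$ is monotone $(c,\Sc_d,\ln^2)$-tiny with probability at least $1-200\sqrt{d/n}$, which tends to $1$. Every graph in the support of $G(n,m)$ with $m=\lceil d(n-1)/2\rceil$ has exactly the right number of edges, so in fact a~$(1-o(1))$ fraction of all \emph{labeled} $n$-vertex $m$-edge graphs are monotone $(c,\Sc_d,\ln^2)$-tiny, hence lie in $\Xc_d$. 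So it suffices to (i) count labeled $n$-vertex graphs with exactly $m$ edges, and (ii) convert a lower bound on labeled graphs in $\Xc_d$ into a lower bound on \emph{unlabeled} graphs by dividing by at most $n!$.

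First I would count the labeled graphs. The number of labeled $n$-vertex graphs with exactly $m=\lceil d(n-1)/2\rceil$ edges is $\binom{\binom{n}{2}}{m}$. Using $\binom{N}{m}\geq (N/m)^m$ with $N=\binom n2$ and $m=\Theta(dn)$, this is at least $\left(\frac{\binom n2}{m}\right)^m \geq \left(\frac{n(n-1)/2}{d(n-1)/2+1}\right)^{m} \geq \left(\frac{n}{d+o(1)}\right)^{(1-o(1))dn/2} = n^{(1-o(1))dn/2}$ (since $d$ is a fixed constant, the $\log d$ term is absorbed into the $o(1)$ in the exponent). Then, since a $(1-o(1))$-fraction of these are in $\Xc_d$, the number of \emph{labeled} graphs in $\Xc_d$ on $n$ vertices is at least $(1-o(1))\,n^{(1-o(1))dn/2} = n^{(1-o(1))dn/2}$.

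Second, I would pass to unlabeled graphs: each unlabeled $n$-vertex graph corresponds to at most $n!$ labeled ones, so the number of unlabeled graphs in $\Xc_d$ on $n$ vertices is at least $n^{(1-o(1))dn/2}/n! \geq n^{(1-o(1))dn/2}/n^{n} = n^{(1-o(1))dn/2 - n} = n^{\frac{(d-2)n}{2}(1-o(1))}$, which is exactly the claimed bound. (Here I used $n!\leq n^n$, i.e.\ $\log n! \le n\log n$, so the $-n$ in the exponent is lower-order relative to $dn/2$ only when $d>2$; for $d\le 2$ the exponent $\frac{(d-2)n}{2}$ is nonpositive and the statement is trivially true since $\Xc_d$ is nonempty, so one should note this case separately — or simply observe the bound is vacuous there.)

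The only subtlety, and the part I'd be most careful about, is bookkeeping the $o(1)$ terms in the exponent: I need $m = (1+o(1))dn/2$, $\log\binom n2/m = (1+o(1))\log n$ uniformly (fine since $d$ is constant), and $\log n! = (1+o(1))n\log n = o(n\log n \cdot d/2)$ — all routine, but they must be combined so that the final exponent reads $\frac{(d-2)n}{2}(1-o(1))$ rather than, say, $\frac{(d-2)n}{2} - o(n\log n)$ with a worse shape. Since $\frac{dn}{2}\log n - n\log n = \frac{(d-2)n}{2}\log n$ exactly and all error terms are $o(n\log n)$, they can be folded into the $(1-o(1))$ factor, giving the stated form. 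I do not anticipate any genuine obstacle here; the real content is entirely in Theorem~\ref{thm:Gnmisgood}, which we are allowed to assume.
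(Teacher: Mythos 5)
Your proof is correct and follows essentially the same route as the paper: invoke Theorem~\ref{thm:Gnmisgood} to get that a $(1-o(1))$ fraction of the $\binom{\binom{n}{2}}{\lceil d(n-1)/2\rceil}\geq (n/d)^{d(n-1)/2}$ labeled graphs are monotone $(c,\Sc_d,\ln^2)$-tiny, then divide by $n!\leq n^n$ to pass to unlabeled graphs, absorbing all lower-order corrections into the $(1-o(1))$ in the exponent. Your remark on the $d\leq 2$ edge case is sensible bookkeeping (though in the applications $d$ is always taken $\geq 4$), and the only wording quibble is that "$-n$ is lower-order relative to $dn/2$" is not quite the right framing — the subtraction $\frac{dn}{2}\log n - n\log n=\frac{(d-2)n}{2}\log n$ is exact, and the $o(1)$ factor absorbs only the genuinely lower-order terms like $\frac{d}{2}\log n$ and $\frac{d(n-1)}{2}\log d$.
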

	\begin{proof}
		The number of labeled graphs in the support of $G\left(n,\left\lceil d(n-1)/2\right\rceil \right)$ is \[\binom{\binom{n}{2}}{\left\lceil d(n-1)/2 \right\rceil }\geq\left(\frac{n}{d}\right)^{\tfrac{d(n-1)}{2}}. \]By Theorem~\ref{thm:Gnmisgood}, a~$1-200\cdot  \sqrt{d/n}$ fraction of these labeled graphs are monotone $(c,\mathcal{S}_d,\ln^2)$-tiny.
		Furthermore, there are at most $n!\leq n^n$ labelings of a~given unlabeled graph. Thus, the number of unlabeled $n$-vertex graphs in $\Xc_{d}$ is bounded from below by
		\[  \bigg(1-200 \cdot \sqrt{\frac{d}{n}}\bigg)\cdot \frac{1}{n^n}\cdot  \left(\frac{n}{d}\right)^{\tfrac{d(n-1)}{2}} = n^{\frac{(d-2)n}{2}(1-o(1))},\]since $d$ is a~constant.
	\end{proof}
	
	We can now show the main result of the paper.
	We recall it for convenience.
	
   \begin{customthm}{1.1}
	\mainthm
\end{customthm}
	
	\begin{proof}
		Suppose, towards a~contradiction, that any monotone tiny class admits universal graphs of size $u_n := n^s$.
		Let $k_n := n^{2s-1}$ and let $d$ be any fixed integer satisfying $\frac{d-2}{2} > s$, say, $d := 2s+4$.
		
		The number of distinct $u_n$-vertex graphs is at most $2^{u_n^2}$ and 
		the number of $n$-vertex induced subgraphs of a~fixed $u_n$-vertex graph 
		is at most $\binom{u_n}{n}$.
		Hence the number of collections of $k_n$~graphs on $n$ vertices that are
		induced subgraphs of a~$u_n$-vertex (universal) graph is at most 
		\begin{equation}\label{eq:cols} 
			2^{u_n^2}\cdot \binom{\binom{u_n}{n}}{k_n}\leq  2^{u_n^2}\cdot u_n^{k_n \cdot n}.  
		\end{equation}

		On the other hand, the number of different collections of $n$-vertex graphs from $\Xc_{d}$ of cardinality $k_n$ is at least 
		\begin{equation}\label{eq:unlabbelled}
			n^{k_n\cdot \frac{(d-2)n}{2}(1-o(1))}.
		\end{equation} 
		
		By taking logarithms, we can see
		that for sufficiently large $n$ the upper bound \eqref{eq:cols} is smaller than the lower bound \eqref{eq:unlabbelled}. Indeed, 
		
		\begin{align*}
			\log\left(2^{u_n^2}\cdot u_n^{k_n \cdot n} \right) 
			& = u_n^2 + k_n\cdot n\cdot  \log u_n \notag \\
			&= n^{2s} + n^{2s}\cdot  s \log n \notag \\
			& = n^{2s}(1 + s \log n),
		\end{align*} 
		while
		\begin{align*} 
			\log\left(n^{k_n\cdot \frac{(d-2)n}{2}(1-o(1))} \right) 
			& = 
			k_n\cdot \frac{(d-2)n}{2}(1-o(1)) \cdot  \log n  \\
			& = n^{2s}\cdot (s+1)(1-o(1)) \cdot  \log n.
		\end{align*}  
		
		Hence, for any sufficiently large $n$ there exists a~collection of $k_n$ unlabeled $n$-vertex graphs from $\Xc_{d}$ not all of which are induced subgraphs of a~$u_n$-vertex graph.	 
		
		We finally seek to apply~\cref{lem:good-tiny}.
		Let $L\subseteq \Nn$ be defined as the image of the function $\ell:\Nn\rightarrow \Nn$, given by $\ell(n+1) = \lceil \exp(\sqrt{\ell(n)}) \rceil$ and $\ell(0)=1$.
		Since, for the function $t=\ln^2$, we have $t(\ell(n)) \leq \ell(n)$ and $\ell(n) \leq t(\ell(n+1)) = (\ln(\ell(n+1)))^2\leq\ell(n+1)$ for all $n\in \Nn$, we see that $L$ is $\ln^2$-sparse. For each $\ell\in L$, let $\Mc_{\ell}$ be a~set of size $k_{\ell}$ of $\ell$-vertex graphs from $\Xc_d$  that is not representable by any $u_{\ell}$-vertex graph, if such a~set exists, and an arbitrary set of size at most~$k_{\ell}$ of $u_{\ell}$-vertex graphs from $\Xc_d$, otherwise.
		Observe that $|\Mc_{\ell}|=k_{\ell}  = \ell^{2s-1} = e^{(2s-1)\ln \ell } \leq \gamma^{\ln^2 \ell} =  \gamma^{t(\ell)}$, for some constant~$\gamma$.
		Thus, by~\cref{lem:good-tiny}, $\Cc=\mon(\cup_{\ell \in L} \Mc_\ell)$ is monotone tiny.
		However, by construction, $\Cc_\ell$ does not admit universal graphs of size $u_\ell = \ell^s$.
	\end{proof}
	
	\subsection{Complexity of tiny classes}
	\label{sec:param-complexity}

A natural approach to describe some family of graph classes is to establish that each class in
the family is a~subclass of a~class from some other family possessing some desirable properties.
For example, the Small conjecture \cite{BGK22} suggested such a~description for the family of small classes.
The conjecture stated that any small class has bounded twin-width, i.e., for any small class~$\mathcal{C}$ there exists a~$k \in \Nn$ such that $\mathcal{C}$ is a~subclass
of the class of graphs of twin-width at most $k$.

Chandoo \cite{Chandoo23} observed that the proof of Hatami and Hatami \cite{HH22} implies that the family of 
factorial classes cannot be described by a~countable set of factorial graph classes. In particular,
the family of factorial classes cannot be described by a~countable set of graph parameters that can
be bounded only for factorial classes. 
We show below that our proof implies similar conclusions for tiny classes. For a~constant $c$, we say that a~hereditary graph class $\mathcal{C}$ is $c$-factorial if, for every $n \in \Nn$, the number of labeled $n$-vertex graphs in $\Cc$ is at most $n^{c n}$. Thus, a~class is factorial if it is $c$-factorial for some $c$.

\begin{customthm}{1.2}	
	\classthmbuff
\end{customthm}

\begin{proof}
	Let $\Fc=\{\mathcal{C}_i\}_{i\in \mathbb{N}}$ be any countable set of $c$-factorial graph classes.
	If we set $C=2c+4$, then, by \cref{lem:many-good-graphs}, we have that the number of $n$-vertex unlabeled graphs in $\Xc_{C}$ is at least $n^{ (c+1) n(1-o(1))} = \omega(n^{ cn})$, and so $\Xc_{C}$ is not a~$c$-factorial class.
	It follows that for every $i\in \mathbb{N}$ there exists a~constant $k_i$ such that for any $k \geq k_i$ there exists a~\mbox{$k$-vertex} graph that belongs to $\Xc_{C}$, but not to $\mathcal{C}_i$.

	Let $t:\mathbb{N}\rightarrow \mathbb{R}$ be an increasing function satisfying $t(n) \leq n $ for all $n\in \Nn$, and let $L \subseteq \Nn$ a~$t$-sparse set.
	We define $r_1$ to be the minimum element in $L$ that is greater than $k_1$, and, for every $j \geq 2$, we define $r_j$ to be the minimum element in $L$ that is greater than $\max\{ r_{j-1}, k_j \}$.
	For every $i \in \Nn$, let $G_i$ be an $r_i$-vertex graph in $\Xc_C$ that does not belong to $\Cc_i$. By the definition of $r_i$ and the above discussion, such a~graph always exists.
	
	Now, by~\cref{lem:good-tiny}, the class $\Zc := \mon(\{G_{i}~:~i \in \Nn\})$ is a~monotone tiny class since we are including at most one $\ell$-vertex graph for every $\ell \in L$.
	Clearly, by construction, $\mathcal{Z}$ is not contained in any class from $\Fc$.
\end{proof}

\begin{definition}\label{def:param}	
	A \emph{graph parameter} is a~function $\sigma$ that assigns to every graph a~number such that
	$\sigma(G_1) = \sigma(G_2)$ whenever $G_1 \cong G_2$.
	The graph parameter $\sigma$ is \emph{bounded} for a~class of graphs $\Cc$
	if there exists a~$\kappa \in \Nn$ such that $\sigma(G) \leq \kappa$ for every $G \in \Cc$; otherwise $\sigma$ is \emph{unbounded} for $\Cc$.
	For a~constant $c>0$, the graph parameter is \emph{$c$-factorial} (resp.\ \emph{small}), if for any fixed number $\kappa$ the class of all graphs $G$ with $\sigma(G) \leq \kappa$ is $c$-factorial (resp.\ small) and the parameter is unbounded for any class that is not $c$-factorial (resp.\ small). 
\end{definition}

We say that a~set $\Sigma$ of graph parameters \textit{describes} a~family $\Fc$ of graph classes if for every graph class $\Cc\in \Fc$, there exists a~parameter $\sigma \in \Sigma$ such that $\sigma$ is bounded for $\Cc$.

\begin{corollary}\label{th:param-complexity}	
	For any $c>0$, and any countable set of $c$-factorial parameters, there exists a~monotone tiny graph class for which every parameter in the set is unbounded.
\end{corollary}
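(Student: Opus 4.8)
The plan is to deduce \Cref{th:param-complexity} directly from \Cref{th:class-complexity}, essentially by observing that a countable set of $c$-factorial parameters induces a countable family of $c$-factorial classes. First I would unpack the definitions: suppose $\Sigma = \{\sigma_i\}_{i \in \Nn}$ is a countable set of $c$-factorial graph parameters. For each $i \in \Nn$ and each $\kappa \in \Nn$, let $\Cc_{i,\kappa}$ denote the class of all graphs $G$ with $\sigma_i(G) \leq \kappa$; by the definition of a $c$-factorial parameter, every $\Cc_{i,\kappa}$ is a $c$-factorial class. The collection $\Fc := \{\Cc_{i,\kappa} : i, \kappa \in \Nn\}$ is a countable union of countable sets, hence countable, and consists of $c$-factorial classes.

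Next I would apply \Cref{th:class-complexity} to this family $\Fc$: there exists a monotone tiny class $\Zc$ that is not contained in any $\Cc_{i,\kappa}$. It remains to check that every parameter $\sigma_i \in \Sigma$ is unbounded for $\Zc$. Suppose for contradiction that some $\sigma_i$ is bounded for $\Zc$, say $\sigma_i(G) \leq \kappa$ for all $G \in \Zc$. Then by definition $\Zc \subseteq \Cc_{i,\kappa}$, contradicting the choice of $\Zc$. Hence every $\sigma_i$ is unbounded for $\Zc$, which is exactly the conclusion sought.

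There is really no serious obstacle here; the statement is a formal corollary, and the only thing to be mildly careful about is the quantifier bookkeeping: the definition of a $c$-factorial parameter requires that for \emph{each fixed} threshold $\kappa$ the sublevel set $\{G : \sigma(G) \leq \kappa\}$ is $c$-factorial, so one genuinely needs to range over all $\kappa \in \Nn$ (not just a single one) when building $\Fc$, and one must note that a countable union of the $\Cc_{i,\kappa}$ over $(i,\kappa) \in \Nn \times \Nn$ is still countable. One should also record that a parameter being ``bounded for $\Zc$'' means $\sigma_i$ is at most some finite $\kappa$ on all of $\Zc$, so that $\Zc \subseteq \Cc_{i,\kappa}$; this is immediate from \Cref{def:param}. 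With these observations in place the corollary follows in a few lines.
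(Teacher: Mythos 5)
Your proposal is correct and is essentially identical to the paper's own proof: both form the countable family of sublevel-set classes $\{\Cc_{i,\kappa}\}$, invoke Theorem~\ref{th:class-complexity}, and conclude by noting that a bounded parameter would force containment in some $\Cc_{i,\kappa}$. No gaps.
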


\begin{proof}
	Given any countable set of graph parameters $(\sigma_n)_{n\in \Nn}$, we define the countable family of classes $\Fc=\{\Cc_{n,i} : (n,i)\in \Nn\times\Nn\}$, where $\Cc_{n,i}$ is the class of graphs $G$ with $\sigma_n(G)\leq i$. By Theorem~\ref{th:class-complexity}, there exists a~monotone tiny class $\mathcal{C}$ that is contained in none of the classes in $\Fc$. This implies that every parameter in $(\sigma_n)_{n\in \Nn}$ is unbounded for $\Cc$. Indeed, otherwise we would have $\mathcal{C} \subseteq \Cc_{n,i}$ for some $(n,i)\in \Nn\times\Nn$, a~contradiction. 	\end{proof}

\cref{th:param-complexity} shows that similar attempts to describe the family of small or even the family of monotone tiny classes via a~single or countably many $c$-factorial parameters will fail.
A very special corollary of the above theorem is the negative answer to the Small conjecture \cite{BGK22}, which posits that every small class has bounded twin-width.

      \begin{customcor}{1.1}
	\twwthm
\end{customcor}

\begin{proof}
	This follows from \Cref{th:param-complexity} since twin-width is a~small parameter \cite[Theorem 2.5]{BGK22}, and in particular it is a~$2$-factorial parameter. 
\end{proof}

We note that this conjecture was already refuted \cite{BGTT22}, however the proof relied on some powerful group-theoretic machinery in the form of Osajda's result on so-called \emph{small cancellation labelings}~\cite{Osajda,EG23+}.
One benefit of our proof is that it is self-contained, not relying on this result, nor anything else from group theory.

We also note that Theorem~\ref{th:class-complexity} could alternatively be derived (with some work) from the previous refutation of the Small conjecture \cite{BGTT22}.
Again, the strength of our approach is that it is self-contained and relatively elementary. 
The limit of the construction of~\cite{BGTT22} is that one cannot add many graphs at a~same level. 
Hence it cannot be used to attempt showing~Theorem~\ref{thm:bdddeglabelings}.
	
	\section{Discussion and open problems}
	 \label{sec:discussion}
 
 Our main result shows that for any constant $s$ there exists a~small, even monotone tiny, class of graphs
 that does not have a~universal graph of size $n^s$, or, equivalently, does not admit an adjacency labeling scheme of size $s \log n$. 
 %
 However, the Implicit Graph Conjecture for small (and even for monotone tiny) classes  remains a~challenging open problem.

 \begin{question}
 	\label{q:small-igc}
 	Is there a~small class with no $f(n)$-bit labeling scheme for some $f(n) = \omega(\log n)$? 
 \end{question}
 
 \noindent
 We note that if the Implicit Graph Conjecture fails for monotone tiny classes, it does so by at most a factor of $\log n$ \cite{BDSZZ23mono}.
 
 While it is not known whether every tiny class admits an $O(\log n)$-bit labeling scheme or not,
 there are families of tiny classes (namely, classes of bounded clique-width and classes of bounded twin-width) that do admit such labeling schemes, but no uniformly bounded labeling schemes are currently known. 
 
 \begin{question}\label{q:cw-tww-ls}
 	Is there a~constant $c$ such that every graph class of bounded clique-width (twin-width) admits a~labeling scheme of size $(c+o(1)) \log n$?
 \end{question}	
 
 \noindent We note that, according to the discussion in \cref{sec:param-complexity}, there is qualitative difference between the entire family of tiny classes and the families of classes of bounded clique-width and classes bounded twin-width. Indeed, each of the latter two families can be described by a~single small parameter (clique-width and twin-width, respectively), while the entire family of tiny classes cannot be described even by countably many such parameters. 
 
 %
 	%

 As mentioned in the introduction, ``tiny $\overset{{\scriptscriptstyle ?}}{=}$ small'' is still open. Equality was conjectured in the first arXiv version of~\cite{BNO21} (see Conjecture 8.1, $3. \Rightarrow 2.$) however this conjecture has disappeared from the current arXiv version of \cite{BNO21} (due to being partially refuted, as it contained the Small conjecture). We believe ``tiny $\overset{{\scriptscriptstyle ?}}{=} $ small'' is an interesting question, so we restate it here.
 \begin{question}
 	Is every small class tiny?  
 \end{question}
 
 Finally, we mention an intriguing question about random graphs that arose in our study.
 We found a~threshold for the property of having exponentially many unlabeled induced subgraphs. We do not know whether there exists a~sharp threshold for this property. 
 \begin{question}
 	Let $G_n\sim G(n,p)$. Does there exist $c\geq 1$ such that for any constant $\delta > 0$, if $p<\frac{c-\delta}{n}$, then w.h.p.\ the number of unlabeled induced subgraphs of $G_n$ is $2^{o(n)}$, and, if $p>\frac{c+\delta}{n}$, then w.h.p.\ this number is~$2^{\Theta(n)}$? 
 \end{question}

	\textbf{Acknowledgments.}
        We are grateful to Sebastian Wild for useful discussions on the topic of this paper. A part of this work was done during the 1st workshop on Twin-width, which was partially financed by the grant ANR ESIGMA (ANR-17-CE23-0010) of the French National Research Agency.
        This work has been supported by Research England funding to enhance research culture, by the Royal Society (IES\textbackslash R1\textbackslash 231083), by the ANR projects TWIN-WIDTH (ANR-21-CE48-0014) and Digraphs (ANR-19-CE48-0013), and also the EPSRC project EP/T004878/1: Multilayer Algorithmics to Leverage Graph Structure.

	\bibliographystyle{alpha}
	\bibliography{biblio}
 
\end{document}